\documentclass[a4paper, 11pt,reqno]{amsart}

\usepackage[T1]{fontenc}      % Uses 'T1' font encoding.
\usepackage[utf8]{inputenc}
\usepackage[foot]{amsaddr}    % Manages options for displayed author information.
\usepackage{comment}
\usepackage{siunitx}
\usepackage[british]{babel}   % Support for different languages.
\usepackage{csquotes}         % Adds quotation marks according to language.
\usepackage[babel]{microtype} % Makes things look nicer (spacing and so on).

\usepackage{mathtools}        % Extension of 'amsmath' which fixes some bugs.
\usepackage{amssymb}          % Extra symbols and fonts.
\usepackage{amsthm}           % Helps to define "theorem-like" environments.
\usepackage{amstext}          % Defines a '\text' macro for math environments.
\usepackage{mleftright}       % Fixes spacing issue.
\usepackage{gensymb}          % Allows to use degrees.
\usepackage[makeroom]{cancel} % Defines cancellation of terms in formulae.
\usepackage{mathdots}         % Adds some definitions of dots.
\usepackage{mathrsfs}         % Adds rsfs fonts.
\usepackage{dsfont}           % Allows some fancy numbers (\mathds).
\usepackage{stickstootext}
\usepackage[stickstoo,varbb]{newtxmath} % These two lines are an alternative for stix2 which do essentially the same but fix several issues with some symbols...
\makeatletter
\DeclareFontFamily{U}{ntxmia}{\skewchar \font =127}
 \DeclareFontShape{U}{ntxmia}{m}{it}{
                        <-> \ntxmath@scaled ntxmia
                      }{}
                      \DeclareFontShape{U}{ntxmia}{b}{it}{
                        <-> \ntxmath@scaled ntxbmia
                      }{}
\makeatother

\usepackage{graphicx}         % Provides commands to include graphics.
\usepackage{psfrag}           % Gives some extra useful things for graphics.
\usepackage{caption}          % Options for formatting float captions.
\usepackage{tikz}             % Language to create graphics programmatically.
\usetikzlibrary{backgrounds}
\tikzset{vtx/.style={inner sep=1.7pt, outer sep=0pt, circle, fill,draw}}
\usepackage{pgfplots}
\pgfplotsset{compat=1.18}

\usepackage{booktabs}         % Enhances the quality of tables.

\usepackage[margin=1in]{geometry} % Provides commands to define the page layout.
\usepackage{parskip}              % Introduces spacing between paragraphs. Can add options [skip=6pt,indent=12pt]
\newlength\docparskip
\usepackage{enumitem}         % Allows customization for enumerations.
\setlist{nosep, itemsep=0pt, parsep=0pt, before={\parskip=0pt}, after=\vspace{-\docparskip}}%

\usepackage[textsize=footnotesize]{todonotes}
\usepackage[numbers,sort]{natbib}

\makeatletter
\def\NAT@spacechar{~}% NEW
\makeatother

\usepackage{hyperref}              % For hyperlinks within the document
\usepackage[capitalise]{cleveref}  % For improved referencing options; use [capitalise] if desired
\hypersetup{colorlinks=true,
   citecolor=blue,
   filecolor=blue,
   linkcolor=blue,
   urlcolor=blue
}
\usepackage{url}

\makeatletter
\if@cref@capitalise
\crefname{figure}{Figure}{Figures}
\crefname{claim}{Claim}{Claims}
\crefname{conjecture}{Conjecture}{Conjectures}
\crefname{algorithm}{Strategy}{Strategies}
\else
\crefname{figure}{figure}{figures}
\crefname{claim}{claim}{claims}
\crefname{conjecture}{conjecture}{conjectures}
\crefname{algorithm}{strategy}{strategies}
\fi
\makeatother
\Crefname{figure}{Figure}{Figures}
\Crefname{claim}{Claim}{Claims}
\Crefname{conjecture}{Conjecture}{Conjectures}
\Crefname{algorithm}{Strategy}{Strategies}

\usepackage{algorithm}
\usepackage{algpseudocode}
\makeatletter
\renewcommand{\ALG@name}{Strategy} %Change the name Algorithm to Strategy
\makeatother

\allowdisplaybreaks

\newtheorem{theorem}{Theorem}[section]

\newtheorem{corollary}[theorem]{Corollary}

\newtheorem{lemma}[theorem]{Lemma}

\newtheorem{conjecture}[theorem]{Conjecture}

\newtheorem{problem}[theorem]{Problem}

\theoremstyle{definition}

\numberwithin{equation}{section}

\renewcommand{\binom}[2]{\ensuremath{\mleft(\kern-.1em\genfrac{}{}{0pt}{}{#1}{#2}\kern-.1em\mright)}}    % This makes binomial numbers nicer with stix2 (in displayed equations). Remove if stix2 is not loaded.
\newcommand{\inbinom}[2]{\ensuremath{\bigl(\kern-.1em\genfrac{}{}{0pt}{}{#1}{#2}\kern-.1em\bigr)}} % This is better for inline equations, as it will keep sizes of parentheses consistent and not create extra vertical space.

\newcommand*\nume{\ensuremath{\mathrm{e}}}

\newcommand{\cE}{\mathcal{E}}
\newcommand{\cF}{\mathcal{F}}

\newcommand{\cS}{\mathcal{S}}

\DeclareSymbolFont{sfletters}{OML}{cmbrm}{m}{it}
\DeclareMathSymbol{\salpha}{\mathord}{sfletters}{"0B}
\DeclareMathSymbol{\sbeta}{\mathord}{sfletters}{"0C}
\DeclareMathSymbol{\sgamma}{\mathord}{sfletters}{"0D}
\DeclareMathSymbol{\sdelta}{\mathord}{sfletters}{"0E}
\DeclareMathSymbol{\sepsilon}{\mathord}{sfletters}{"0F}
\DeclareMathSymbol{\szeta}{\mathord}{sfletters}{"10}
\DeclareMathSymbol{\seta}{\mathord}{sfletters}{"11}
\DeclareMathSymbol{\stheta}{\mathord}{sfletters}{"12}
\DeclareMathSymbol{\siota}{\mathord}{sfletters}{"13}
\DeclareMathSymbol{\skappa}{\mathord}{sfletters}{"14}
\DeclareMathSymbol{\slambda}{\mathord}{sfletters}{"15}
\DeclareMathSymbol{\smu}{\mathord}{sfletters}{"16}
\DeclareMathSymbol{\snu}{\mathord}{sfletters}{"17}
\DeclareMathSymbol{\sxi}{\mathord}{sfletters}{"18}
\DeclareMathSymbol{\spi}{\mathord}{sfletters}{"19}
\DeclareMathSymbol{\srho}{\mathord}{sfletters}{"1A}
\DeclareMathSymbol{\ssigma}{\mathord}{sfletters}{"1B}
\DeclareMathSymbol{\stau}{\mathord}{sfletters}{"1C}
\DeclareMathSymbol{\supsilon}{\mathord}{sfletters}{"1D}
\DeclareMathSymbol{\sphi}{\mathord}{sfletters}{"1E}
\DeclareMathSymbol{\schi}{\mathord}{sfletters}{"1F}
\DeclareMathSymbol{\spsi}{\mathord}{sfletters}{"20}
\DeclareMathSymbol{\somega}{\mathord}{sfletters}{"21}

\DeclareFontEncoding{LGR}{}{}
\DeclareSymbolFont{sfgreek}{LGR}{cmss}{m}{n}
\SetSymbolFont{sfgreek}{bold}{LGR}{cmss}{bx}{n}
\DeclareMathSymbol{\ssalpha}{\mathord}{sfgreek}{`a}
\DeclareMathSymbol{\ssbeta}{\mathord}{sfgreek}{`b}
\DeclareMathSymbol{\ssgamma}{\mathord}{sfgreek}{`g}
\DeclareMathSymbol{\ssdelta}{\mathord}{sfgreek}{`d}
\DeclareMathSymbol{\ssepsilon}{\mathord}{sfgreek}{`e}
\DeclareMathSymbol{\sszeta}{\mathord}{sfgreek}{`z}
\DeclareMathSymbol{\sseta}{\mathord}{sfgreek}{`h}
\DeclareMathSymbol{\sstheta}{\mathord}{sfgreek}{`j}
\DeclareMathSymbol{\ssiota}{\mathord}{sfgreek}{`i}
\DeclareMathSymbol{\sskappa}{\mathord}{sfgreek}{`k}
\DeclareMathSymbol{\sslambda}{\mathord}{sfgreek}{`l}
\DeclareMathSymbol{\ssmu}{\mathord}{sfgreek}{`m}
\DeclareMathSymbol{\ssnu}{\mathord}{sfgreek}{`n}
\DeclareMathSymbol{\ssxi}{\mathord}{sfgreek}{`x}
\DeclareMathSymbol{\ssomicron}{\mathord}{sfgreek}{`o}
\DeclareMathSymbol{\sspi}{\mathord}{sfgreek}{`p}
\DeclareMathSymbol{\ssrho}{\mathord}{sfgreek}{`r}
\DeclareMathSymbol{\sssigma}{\mathord}{sfgreek}{`s}
\DeclareMathSymbol{\sstau}{\mathord}{sfgreek}{`t}
\DeclareMathSymbol{\ssupsilon}{\mathord}{sfgreek}{`u}
\DeclareMathSymbol{\ssphi}{\mathord}{sfgreek}{`f}
\DeclareMathSymbol{\sschi}{\mathord}{sfgreek}{`q}
\DeclareMathSymbol{\sspsi}{\mathord}{sfgreek}{`y}
\DeclareMathSymbol{\ssomega}{\mathord}{sfgreek}{`w}
\DeclareMathSymbol{\ssvarsigma}{\mathord}{sfgreek}{`c}
\DeclareMathSymbol{\ssGamma}{\mathalpha}{sfgreek}{`G}
\DeclareMathSymbol{\ssDelta}{\mathalpha}{sfgreek}{`D}
\DeclareMathSymbol{\ssTheta}{\mathalpha}{sfgreek}{`J}
\DeclareMathSymbol{\ssLambda}{\mathalpha}{sfgreek}{`L}
\DeclareMathSymbol{\ssXi}{\mathalpha}{sfgreek}{`X}
\DeclareMathSymbol{\ssPi}{\mathalpha}{sfgreek}{`P}
\DeclareMathSymbol{\ssSigma}{\mathalpha}{sfgreek}{`S}
\DeclareMathSymbol{\ssUpsilon}{\mathalpha}{sfgreek}{`U}
\DeclareMathSymbol{\ssPhi}{\mathalpha}{sfgreek}{`F}
\DeclareMathSymbol{\ssPsi}{\mathalpha}{sfgreek}{`Y}
\DeclareMathSymbol{\ssOmega}{\mathalpha}{sfgreek}{`W}

\newcommand{\PP}{\mathbb{P}}
\renewcommand{\Pr}{\PP}

\newcommand{\nc}{\operatorname{nc}}

\DeclareRobustCommand{\msf}[1]{%
  \ifcat\noexpand#1\relax\msfgreek{#1}\else\mathsf{#1}\fi
}
\makeatletter
\newcommand{\msfgreek}[1]{\csname ss\expandafter\@gobble\string#1\endcsname}
\makeatother
\newcommand{\vect}[1]{\boldsymbol{\msf{#1}}}

\makeatletter
\def\moverlay{\mathpalette\mov@rlay}
\def\mov@rlay#1#2{\leavevmode\vtop{%
  \baselineskip\z@skip \lineskiplimit-\maxdimen
  \ialign{\hfil$\m@th#1##$\hfil\cr#2\crcr}}}
\newcommand{\charfusion}[3][\mathord]{
    #1{\ifx#1\mathop\vphantom{#2}\fi
        \mathpalette\mov@rlay{#2\cr#3}
      }
    \ifx#1\mathop\expandafter\displaylimits\fi}
\makeatother
\newcommand{\cupdot}{\charfusion[\mathbin]{\cup}{\cdot}}
\newcommand{\mybar}[1]{\makebox[0pt]{$\phantom{#1}\overline{\phantom{#1}}$}#1}

\renewcommand{\deg}{\operatorname{deg}}

\newcommand{\eps}{\epsilon}

\newcommand{\defi}[1]{\emph{\color{red!60!black}#1}}

\newcommand{\COMMENT}[1]{}
\newcommand{\COMNEW}[1]{}
\renewcommand{\COMNEW}[1]{\footnote{\textcolor{red!70!black}{#1}}} % comment out to hide comments

\title[On constructing small subgraphs in the budget-constrained random graph process]{On constructing small subgraphs\\ in the budget-constrained random graph process}

\author[S.~Antoniuk]{Sylwia Antoniuk}
\email{antoniuk@amu.edu.pl}
\address[Antoniuk]{Faculty of Mathematics and Computer Science, Adam Mickiewicz University, Pozna\'n, Poland.}
\author[A.~Espuny D\'iaz]{Alberto Espuny D\'iaz}
\email{aespuny@ub.edu}
\address[Espuny D\'iaz]{Departament de Matem\`atiques i Inform\`atica, Universitat de Barcelona (UB), Gran Via de les Corts Catalanes, 585, 08007 Barcelona, Spain.}
\author[K.~Petrova]{Kalina Petrova}
\email{kalina.petrova@ist.ac.at}
\address[Petrova]{Institute of Science and Technology Austria (ISTA), Am Campus 1, 3400 Klosterneuburg, Austria.}
\author[M.~Stojakovi\'{c}]{Milo\v{s} Stojakovi\'{c}}
\email{milos.stojakovic@dmi.uns.ac.rs}
\address[Stojakovi\'{c}]{Department of Mathematics and Informatics, Faculty of Sciences, University of Novi Sad, Serbia.}

\thanks{This research was supported by the Oberwolfach Research Institute for Mathematics through its Oberwolfach Research Fellows (OWRF) program.
S.~Antoniuk was supported by Narodowe Centrum Nauki, grant 2024/53/B/ST1/00164.
A.~Espuny Díaz was supported by the Deutsche Forschungsgemeinschaft (DFG, German Research Foundation) through project no.\ 513704762.
K.~Petrova was supported by the European Union’s Horizon 2020 research and innovation programme under the Marie Skłodowska-Curie grant agreement No.~101034413 \includegraphics[width=4mm]{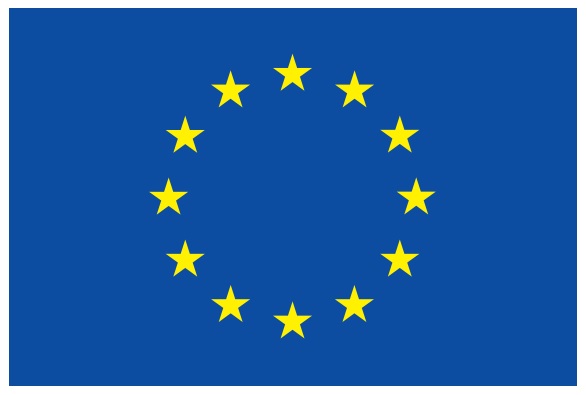}.
M.~Stojakovi\'c was partly supported by the Science Fund of the Republic of Serbia, Grant \#7462: Graphs in Space and Time: Graph Embeddings for Machine Learning in Complex Dynamical Systems (TIGRA), and partly supported by the Ministry of Science, Technological Development and Innovation of the Republic of Serbia (grants 451-03-33/2026-03/200125 \& 451-03-34/2026-03/200125).}

\begin{document}

\begin{abstract}
Consider the budget-constrained random graph process introduced by Frieze, Krivelevich and Michaeli, where each time an edge is offered through the (standard) random graph process we must irrevocably decide whether to ``purchase'' this edge or not, with our goal being to construct a graph which satisfies some property within a given time $t$ and while purchasing at most $b$ edges.
We consider the problem of constructing graphs containing certain fixed small subgraphs.

We provide an optimal strategy for building a graph which contains a copy of $K_4$, showing that budget $b=\omega(\max\{n^8/t^5,n^2/t\})$ suffices and that if $b=o(\max\{n^8/t^5,n^2/t\})$ then no strategy can a.a.s.\ produce a graph containing a copy of $K_4$.
This resolves a problem raised by I{\v{l}}kovi{\v{c}}, Le\'{o}n and Shu.
More generally, we obtain analogously tight results for containing a wheel of any fixed size, or a graph consisting of a tree plus one additional universal vertex.
We also tackle the problem of constructing graphs containing a copy of~$K_5$, obtaining both lower and upper bounds on the optimal budget, though a gap remains in this case.
\end{abstract}

% \vspace{1em}
% Keywords: .

\maketitle

\section{Introduction}

The study of the evolution of different randomised processes for constructing graphs has been a core topic of research in random graph theory.
Arguably the most well-studied process in this family is the so-called \emph{random graph process}.
For a positive integer $n$, let us denote $M\coloneqq\inbinom{n}{2}$ and $[n]\coloneqq\{1,\ldots,n\}$.
The \defi{random graph process} on vertex set $[n]$ refers to a random sequence of graphs $(G_0,G_1,\ldots,G_M)$, all on vertex set $[n]$, where $G_0$ is the empty graph and, for each $i\in[M]$, a pair of vertices $e_i$ is chosen uniformly at random from $\inbinom{[n]}{2}\setminus E(G_{i-1})$ before setting $G_i\coloneqq G_{i-1}\cup\{e_i\}$.
We think of each $i\in[M]$ as a time step, and refer to $e_i$ as the edge \defi{offered}, or \defi{presented}, at time $i$.

We study a variant of this process, proposed recently by \citet{FKM25}, where an intelligent agent called \textbf{Builder} is introduced as a decision agent.
Builder's role is the following: at each time step $i\in[M]$, when the random edge $e_i$ is offered, she must decide (immediately and irrevocably) whether this edge is added to the graph or not.
The motivation for the inclusion of this agent into the model is related to the optimisation of resources throughout the random graph process: indeed, if the aim is to build a graph which satisfies a certain property, it may well be that many of the edges offered throughout the random graph process are not ``useful'' or necessary for attaining the desired property, while adding them to the graph may incur a cost.
As such, Builder's goal will be to find a strategy that allows her to construct a graph which satisfies the desired property, within a certain amount of time, but while purchasing as few edges as possible.

More precisely, we will often assume that Builder has a ``deadline'' for constructing a graph with the desired property, which is expressed through a \defi{time constraint} $t\in[M]$.
Given this constraint, we will denote the sequence of graphs built by Builder by $(B_0,B_1,\ldots,B_t)$; recall that this is defined through an ambient random graph process $(G_0,G_1,\ldots)$, and thus we have the trivial inclusion $B_i\subseteq G_i$ for all~$i\in[t]$.
Moreover, we will assume that there is a limit on the resources that Builder is allowed to use; this is expressed as a \defi{budget constraint} $b\in[t]$, and the strategy that Builder follows must ensure that $|E(B_t)|\leq b$ (note that we may assume that $b\leq t$ as otherwise this constraint would be trivial).
This motivates the name of the \defi{budget-constrained random graph process} when referring to this family of random graph processes.

As is usual when considering random graphs, we are interested in asymptotic statements, that is, we want to know whether Builder has strategies which are very likely to produce a graph with the desired property when $n$ is large.
Formally, we will say that a statement holds \defi{asymptotically almost surely} (a.a.s.\ for short) if the probability that it holds tends to $1$ as $n$ tends to infinity.
Given time and budget constraints $t\in[M]$ and $b\in[t]$, a \defi{$(t,b)$-strategy} is a function which, given a history of the random graph process and the choices of Builder up to some time $0\leq i<t$, and presented with a new edge $e_{i+1}$, outputs whether this next edge should be purchased or not, with the restriction that $|E(B_t)|\leq b$.
We say that a $(t,b)$-strategy $\mathcal{S}$ is \defi{successful} for some (monotone increasing) property~$\mathcal{P}$ if, when running the budget-constrained random graph process under $\mathcal{S}$, a.a.s.\ $B_t\in\mathcal{P}$.
Our first main goal is to determine the optimal asymptotic values of $b$, for each time constraint $t$, for which there exist successful $(t,b)$-strategies for $\mathcal{P}$.
We sometimes informally think of this as a ``budget threshold'' for having successful strategies.

Several papers have considered different spanning properties in the budget-constrained random graph process~\cite{FKM25,Lichev25,Anastos22,KL25,EGNS25a}.
In this paper we instead focus on the case that Builder wants to construct a copy of some fixed subgraph~$F$.
This problem was first addressed by \citet{FKM25}, who obtained tight results on the optimal order of magnitude of~$b$ as a function of~$t$ when attempting to construct copies of any given fixed tree or cycle (and their methods can be used to construct any given unicyclic graph).
Subsequently, \citet{ILS24} obtained analogously tight results for the diamond (the complete graph on four vertices with one edge removed) as well as for $k$-fans (where a $k$-fan for $k\in\mathbb{N}$ is a graph consisting of $k$ triangles, all sharing a single vertex).
No other results for this model were known at the moment.
Both sets of authors asked for the development of general tools to deal with other fixed graphs~$F$, and \citet{ILS24} specifically asked about the case when~$F$ is a clique (with~$K_4$ being the first natural open problem).

A \defi{wheel} on $k\geq4$ vertices, denoted $W_k$, is a graph which consists of a cycle of length $k-1$ with one additional vertex which is joined by an edge to every vertex of the cycle.
Note, in particular, that $W_4=K_4$.
Our first main result in this paper establishes the correct order of magnitude of the ``budget threshold'' required for constructing a copy of $W_k$ in the budget-constrained random graph process, for every $k\geq4$ and the whole range of $t$ (thereby in particular resolving the problem for $K_4$).
This is the first infinite family of graphs containing cycles which share edges for which this threshold is known.
See \cref{fig:wheels} for a visual representation of these results.

\begin{theorem}\label{thm:wheels_main}
    Let $k\geq4$ be an integer.
    For all $t\in[M]$, if
    \begin{equation}\label{equa:Wklowerbound}
        t=o\left(n^{\frac{3}{2}-\frac{1}{2(k-1)}}\right)\qquad\text{ or }\qquad b=o\left(\max\left\{\frac{n^{3k-4}}{t^{2k-3}},\frac{n^{2}}{t}\right\}\right),
    \end{equation}
    then for any $(t,b)$-strategy a.a.s.~$B_t$ does not contain a copy of\/ $W_k$.
    On the other hand, if
    \begin{equation}\label{equa:Wkupperbound}
        t\geq b=\omega\left(\max\left\{\frac{n^{3k-4}}{t^{2k-3}},\frac{n^2}{t}\right\}\right),
    \end{equation}
    then there exists a successful $(t,b)$-strategy for constructing a copy of\/ $W_k$.
\end{theorem}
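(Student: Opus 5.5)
We prove the two halves separately: a lower bound by first-moment counting over Builder's possible purchases, and an upper bound by an explicit greedy strategy. The three terms of \eqref{equa:Wklowerbound} play different roles. The condition $t=o(n^{3/2-1/(2(k-1))})$ is the point below which $G_t$ itself a.a.s.\ contains no $W_k$. We expect this to follow from $\max_{H\subseteq W_k}$ density being attained by $W_k$ itself, with $m(W_k)=(2k-2)/k$. More carefully, $t\ll n^{2-1/m}$ gives $n^{2-k/(2k-2)}=n^{3/2-1/(2(k-1))}$. The bound $b=\Omega(n^2/t)$ is needed already to build a single triangle, or even a path of length two spanning a presented pair, and can be quoted from the known cycle results of \citet{FKM25}. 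It remains to prove that $b=o(n^{3k-4}/t^{2k-3})$ is insufficient.

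For that main lower bound, the plan is to fix the order in which the $2k-2$ edges of a copy of $W_k$ are purchased and show that the last edge is unlikely ever to close a copy. We consider the structure Builder can have purchased: any copy of $W_k$ in $B_t$ must contain edges bought at times when they were random offers. We bound the expected number of copies by summing over ``partial'' copies. The key observation is that each edge offered at step $i$ is uniform, so the probability it completes a given rooted partial structure is $O(1/n^2)$ per step. Builder with $b$ edges can create at most $O(b\cdot \Delta)$ useful configurations, and, since offers are random, degrees in $B_t$ are controlled by $t/n$ neighbourhoods of the random graph. Concretely, the hub $v$ has at most $O(t/n)$ neighbours among offered edges. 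The rim cycle on those neighbours must be built using offered edges inside $N_{G_t}(v)$, and each such edge appears with probability about $(t/n)^2\cdot t/n^2$ per hub. We then use a weighting/potential argument: assign to each partial subgraph $H\subseteq W_k$ a count and show by induction over time that $\mathbb{E}[\#\text{copies of }H\text{ in }B_t]$ is bounded by $b\cdot (t/n^2)^{e(H)-1}\cdot n^{v(H)-2}$-type expressions. The minimum over the possible last-added edges then gives $b\, t^{2k-3}n^{-(3k-4)}\to0$. The hardest step is designing this potential so that Builder's adaptivity, which lets her buy edges only when they extend existing structures, is fully accounted for. We would first try the ``weighted counting of extensions'' method of Iľkovič--León--Shu for the diamond, generalised to extensions along an ordering of $W_k$.

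For the upper bound we use a three-phase strategy. In phase one, with a suitable hub vertex $v$ fixed, Builder buys offered edges at $v$ during the first $t/3$ steps until $d\approx \min\{b, t/n\}$ neighbours are collected. In phase two, Builder buys offered edges inside $N(v)$ to grow paths of length $k-3$. This reduces to the path/tree results of \citet{FKM25} inside a random subprocess restricted to $N(v)$, where the offer rate is $t d^2/n^2$. In phase three, Builder waits for an edge joining the two endpoints of some path. The number of paths needed is about $n^2/t$, and the budget needed to create them is computed through the tree-building bounds. Balancing $d$ against these costs gives the two terms $n^{3k-4}/t^{2k-3}$ and $n^2/t$: the first governs the small-$t$ regime, where the hub must be built with $d=b$, and the second the large-$t$ regime. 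Chebyshev/Chernoff concentration then shows that each phase succeeds a.a.s.
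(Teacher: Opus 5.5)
Both halves of your proposal have a genuine gap.

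\textbf{Upper bound.} Your strategy builds the wheel around a single hub $v$ fixed in advance. This cannot succeed when $n^{3/2-1/(2(k-1))}\ll t\ll n^{3/2}$, whatever the budget. The expected number of copies of $W_k$ in $G_t$ with hub $v$ is $\Theta\bigl(n^{k-1}(t/n^2)^{2k-2}\bigr)=\Theta\bigl((t/n^{3/2})^{2k-2}\bigr)=o(1)$. Equivalently, $N_{G_t}(v)$ has $O(t/n)$ vertices and edge density about $t/n^2$, so a.a.s.\ it contains no $C_{k-1}$. Your balancing claim also fails in this range: for $t\le n^{3/2}$ we have $n^{3k-4}/t^{2k-3}\ge t/n$, so $d=\min\{b,t/n\}=t/n$ and never $b$. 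The budget is larger than any single star can absorb.

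The missing idea is to spread the budget over many hubs, as in \cref{strat:K4sparse}. Fix a set $X$ of $\lceil n^{3k-3}/r^{2k-2}\rceil$ hubs, with $r=o(t)$ close enough to $t$ that the star cost $|X|\cdot t/n$ is still $o(b)$. Grow all these stars in the first half, then buy every offered edge lying inside some hub's neighbourhood. The expected number of rim cycles is then $\Theta((t/r)^{2k-2})\to\infty$. The second moment is controlled because a.a.s.\ every $\ell$-set with $\ell\ge3$ lies in at most five hub neighbourhoods and every pair in at most $4k$.

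For $t=\omega(n^{3/2})$ your single-hub scheme matches the paper's second case, but two details need fixing. First, $C_{k-1}$ is closed from paths of length $k-2$, not $k-3$. Second, the tree result of Frieze, Krivelevich and Michaeli yields one copy of a tree, not the $\omega(n^2/t)$ paths you need. The paper sidesteps this by simulating the optimal cycle strategy (\cref{lem:CycleStrat}) inside $N(x)$, using \cref{lem:simulated_length} to guarantee that the simulated process is long enough.

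\textbf{Lower bound.} The $n^2/t$ term cannot be quoted from triangle or cycle results. It matters only when $t=\omega(n^{3/2})$, and in that range triangles and all cycles $C_{k-1}$ can be built with $b=\omega(n/\sqrt t)$ (\cref{lem:CycleStrat}). Your cherry-closing heuristic gives exactly this: $b$ edges span $O(b^2)$ cherries, each closed with probability about $t/n^2$. Since $n/\sqrt t=o(n^2/t)$, the range $n/\sqrt t\ll b\ll n^2/t$ is left uncovered.

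What is needed is a count in which each vertex attached by a pendant edge contributes a factor $\min\{b,np\}$ (with $p=t/M$), not just $np$. The factor $b$ holds deterministically, since Builder owns at most $b$ edges; the factor $np$ comes from Markov's inequality, since the extending edge must be offered. \Cref{lem:max_no_copies} proves by induction on $e(H)$ over connected $H\subseteq W_k$ that a.a.s.\ Builder has at most $\eta\,b\min\{b,np\}^{v(H)-2}p^{e(H)-v(H)+1}$ copies of $H$. The induction splits according to whether the last-bought edge of the copy is pendant, a non-pendant cut edge, or on a cycle (the last costing a factor $p$). For $W_k$ the two branches of the minimum give, up to constants, $\eta\,bt^{2k-3}/n^{3k-4}$ and $\eta\,(bt/n^2)^{k-1}$, which yield both terms of the lower bound at once.

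Your target $b(t/n^2)^{e(H)-1}n^{v(H)-2}$ is exactly the $np$-branch. You leave its proof (the ``potential'') open, and without the $b$-branch it cannot produce the $n^2/t$ term.
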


The lower bound for the optimal budget constraint $b$ expressed in \eqref{equa:Wklowerbound} is a corollary (\cref{cor:Wheel_lower_bound}) of a more general result (\cref{lem:max_no_copies}) which can be applied to obtain some lower bound for any fixed graph~$F$.
However, we do not believe these bounds to be tight in general, and thus do not pursue fully general expressions.
We remark that this addresses, in a weak form, the quest for general tools proposed by \citet{FKM25}; see \cref{sect:lower_bounds} for the general statements, and \cref{sect:conclusions} for a more thorough discussion about this.
The upper bound expressed via \eqref{equa:Wkupperbound} is obtained by exhibiting a specific strategy and analysing its behaviour; the statement pertaining to this bound is reintroduced later as \cref{thm:WheelStrategy}.

\begin{figure}
\begin{tikzpicture}[scale=1]
    \begin{axis}[
    %unit vector ratio*=1 1 1,
    axis lines = middle,
    x=5cm,
    y=5cm,
    xlabel={$\log_{n}t$},
    ylabel={$\log_{n}b$},
    label style = {below left},
    legend style={at={(1.3,0.75)}},
    xmin=4/3, xmax=2.3, 
    xtick={4/3+0.001,11/8,7/5,17/12,3/2,2},
    xticklabels={$4/3$, , , , $3/2$, $2$},
    ymin=0, ymax=1.6,
    ytick={0.001,1/2,4/3,11/8,7/5,17/12},
    yticklabels={$0$, $1/2$, $4/3$,,,}
    ]
        \addplot[blue, ultra thick, domain=4/3:3/2] {8-5*x};
        \addplot[olive, ultra thick, domain=11/8:3/2] {11-7*x};
        \addplot[red, ultra thick, domain=7/5:3/2] {14-9*x};
        \addplot[purple, ultra thick, domain=17/12:3/2] {17-11*x};
        \addplot[purple, ultra thick, domain=3/2:2] {2-x};
        \addplot[gray, thick, domain=4/3:3/2] {x};
        \addplot[gray, thick, dashdotted, domain=4/3:2] {x-1};
        \addplot[gray, dotted, domain=1/2:3/2] {1/2};
        \addplot[gray, dotted, domain=4/3:11/8] {11/8};
        \addplot[gray, dotted, domain=4/3:7/5] {7/5};
        \addplot[gray, dotted, domain=4/3:17/12] {17/12};
        \addplot[gray, dotted] coordinates {(3/2, 0) (3/2, 1/2)};
        \addplot[gray, dotted] coordinates {(11/8, 0) (11/8, 11/8)};
        \addplot[gray, dotted] coordinates {(7/5, 0) (7/5, 7/5)};
        \addplot[gray, dotted] coordinates {(17/12, 0) (17/12, 17/12)};
        \legend{$W_4$,$W_5$,$W_6$,$W_7$,,$x\mapsto x$,$x\mapsto x-1$}
    \end{axis}
\end{tikzpicture}
\caption{A depiction of the optimal budget~$b$ for successful $(t,b)$-strategies for constructing copies of~$W_k$ with $k\in\{4,5,6,7\}$, as given by \cref{thm:wheels_main}.}
\label{fig:wheels}
\end{figure}
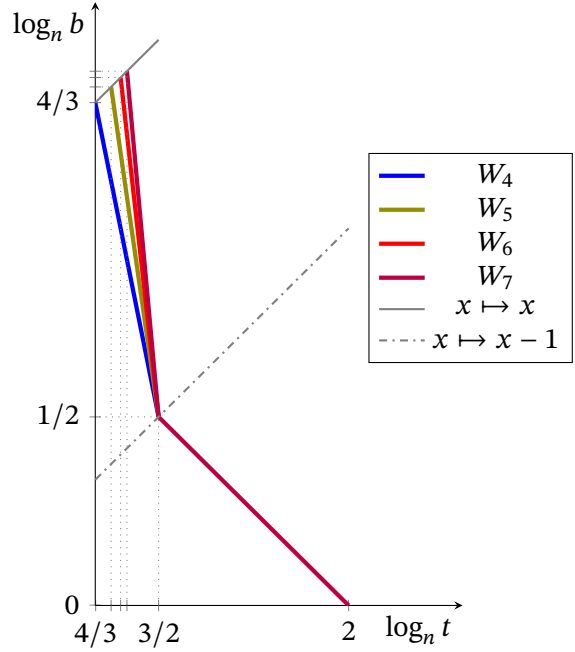

For our second main result, we use our techniques to obtain tight results for a richer family of graphs.
Given a tree $T$, let $K_{1,T}$ denote the graph consisting of a copy of $T$ plus one additional vertex which is joined by an edge to all vertices of $T$.

\begin{theorem}\label{thm:KT_main}
    Let $T$ be a fixed tree.
    Let $m\coloneqq|E(T)|$.
    For all $t\in[M]$, if
    \begin{equation*}\label{equa:KTlowerbound}
        t=o\left(n^{\frac{3m}{2m+1}}\right)\qquad\text{ or }\qquad b=o\left(\max\left\{\frac{n^{3m}}{t^{2m}},\left(\frac{n^{2}}{t}\right)^{\frac{m}{m+1}}\right\}\right),
    \end{equation*}
    then for any $(t,b)$-strategy a.a.s.~$B_t$ does not contain a copy of\/ $K_{1,T}$.
    On the other hand, if
    \begin{equation*}\label{equa:KTupperbound}
        t\geq b=\omega\left(\max\left\{\frac{n^{3m}}{t^{2m}},\left(\frac{n^{2}}{t}\right)^{\frac{m}{m+1}}\right\}\right),
    \end{equation*}
    then there exists a successful $(t,b)$-strategy for constructing a copy of\/ $K_{1,T}$.
\end{theorem}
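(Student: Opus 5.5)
We prove the two halves separately, following the same scheme as for \cref{thm:wheels_main}.

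\emph{Lower bound.}
We would invoke the general counting bound announced for \cref{sect:lower_bounds} (\cref{lem:max_no_copies}). That result bounds the expected number of copies of a fixed graph $F$ that any $(t,b)$-strategy can complete, via a maximum over orderings of the edges of $F$ in which they are purchased. For $F=K_{1,T}$ (with $m+1$ vertices in $T$, $2m+1$ edges and $m+2$ vertices) we would check two orderings.

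\begin{itemize}
\item \textbf{First ordering.} Build the star at the apex first: $m+1$ edges, at most $b$ of them sit at a single vertex. Then close the $m$ tree edges among the neighbours. Each such edge must be offered among $O(b^2)$ pairs, with probability $\approx t/n^2$ each. This gives expected copies $\lesssim b\cdot (b t/n^2)^{m}$-type terms, which are $o(1)$ when $b=o((n^2/t)^{m/(m+1)})$.
\item \textbf{Second ordering.} The regime $b=o(n^{3m}/t^{2m})$ corresponds to building $T$ cheaply with each last edge forced. Counting gives roughly $b\cdot (t/n^2)^{\cdots}$. Each of the $2m$ "closing" edges beyond a spanning structure costs a factor $t/n^2$ times the available choices; the crude bound is $b\,n^{?}(t/n^2)^{2m}\cdot n^{m}\dots$. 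We would match exponents to get $n^{3m}/t^{2m}$.
\item \textbf{Time constraint.} The condition $t=o(n^{3m/(2m+1)})$ is simply where even $G_t$ a.a.s.\ has no $K_{1,T}$ at all: the expected number of copies is $n^{m+2}(t/n^2)^{2m+1}$, and we need the relevant densest-subgraph first-moment count to vanish.
\end{itemize}

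The main work is verifying that the maximum in \cref{lem:max_no_copies} is attained by these two orderings. We would do this by an exchange argument on the degree sequence of $T$.

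\emph{Upper bound.}
We would give a two-phase strategy generalising the one for wheels (\cref{thm:WheelStrategy}).

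\begin{itemize}
\item \textbf{Phase 1.} Fix a vertex $v$ and, during the first $t/2$ steps, purchase every edge at $v$ up to about $b$ edges, giving a neighbourhood $N$ of size $\Theta(\min\{b,t/n\})$.
\item \textbf{Phase 2.} Grow copies of $T$ inside $N$ greedily, rooted trees level by level. Purchase an offered edge inside $N$ whenever it extends some partial embedding of $T$, with each partial-embedding type allotted a budget fraction.
\end{itemize}

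Two sources of success cover the two terms of the maximum. If $|N|^2 t/n^2$ is large, edges inside $N$ arrive often, and the $(n^2/t)^{m/(m+1)}$ term comes from balancing $b$ against $(b^2t/n^2)$ extensions. Otherwise we instead build many stars/partial trees outside first and then attach the apex. Concretely, we would build many copies of $T$ minus a leaf in which vertices have a common neighbour, and wait for closing edges; this gives the $n^{3m}/t^{2m}$ term.

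Each stage uses a second-moment or Chernoff argument to show the number of partial structures grows as predicted. At every stage a.a.s.\ at least $\omega(1)$ times the expected count survives, and distinct partial copies are nearly disjoint so that concentration holds.

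\emph{Main obstacle.}
The hardest part is the regime near $t\approx n^{3/2}$, where neither the neighbourhood phase nor the leaf-closing phase alone suffices. Here the budget must be split across $m$ stages so that the product of growth factors matches $n^{3m}/t^{2m}$. We would first try choosing stage sizes geometrically, as in the wheel strategy, and verify that the optimisation yields exactly the stated maximum.
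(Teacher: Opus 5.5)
Your proposal has a genuine gap in the upper bound for $t=O(n^{(3m+1)/(2m+1)})$, where $n^{3m}/t^{2m}$ is the dominant term. A single apex cannot work there. Even a neighbourhood of the largest possible size $\Theta(t/n)$ contains in expectation only $\Theta\bigl((t/n)^{m+1}(t/n^2)^m\bigr)=\Theta(t^{2m+1}/n^{3m+1})=O(1)$ copies of $T$. Your fallback never becomes a strategy: building trees or partial trees first and attaching the apex afterwards, or closing copies of $K_{1,T}$ minus a leaf. It also has the order reversed. A copy of $T$ that is already built gains a common neighbour of all its $m+1$ vertices only with probability about $n(t/n^2)^{m+1}$, and those apex edges would have to be bought speculatively.

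The missing idea is to run the apex-first strategy in parallel:
\begin{itemize}
\item Fix a set $X$ of roughly $n^{3m+1}/t^{2m+1}$ centres; the paper takes $n^{3m+1}/r^{2m+1}$ with $r=o(t)$ close to $t$.
\item In the first half, buy every offered $X$--$V$ edge. This costs about $|X|t/n\approx n^{3m}/t^{2m}$.
\item In the second half, buy every offered edge inside $N(x)$ for any $x\in X$. This costs about $|X|(t/n)^2(t/n^2)$, which is smaller because $t=o(n^{3/2})$ here.
\end{itemize}
The budget is controlled via \cref{lem:coupling}. The expected number of copies of $T$ inside some $N(x)$ is $\Theta(|X|(t/n)^{m+1}(t/n^2)^m)=\Theta((t/r)^{2m+1})=\omega(1)$. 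Chebyshev then applies once you show that a.a.s.\ every $\ell$-set ($\ell\ge2$) lies in the neighbourhoods of only constantly many centres, which bounds the number of overlapping pairs of copies.

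Consequently your ``main obstacle'' is illusory. The two regimes meet at $t=n^{(3m+1)/(2m+1)}$, not at $n^{3/2}$. Each term of the maximum is covered by a simple strategy: two purchase rounds in the sparse case, and in the dense case a neighbourhood at one vertex followed by a simulated tree strategy. No $m$-stage geometric splitting is needed, and the wheel proof contains none either.

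Two smaller points. First, \cref{lem:max_no_copies} is not a maximum over purchase orderings. It gives the explicit bound $\eta\, b\min\{b,np\}^{v(F)-2}p^{e(F)-v(F)+1}$, which for $K_{1,T}$ is $\eta\, b\min\{b,np\}^m p^m$, so no exchange argument is needed. Split at $t=n^{(3m+1)/(2m+1)}$: $\min\{b,np\}\le np$ gives $O(\eta\, bt^{2m}/n^{3m})$, which is your unfinished ``second ordering'', and $\min\{b,np\}\le b$ gives $O(\eta\, b^{m+1}t^m/n^{2m})$. The time condition needs only the plain first moment.

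Second, in the dense regime your single-vertex plan matches the paper in outline. Instead of designing and analysing a greedy level-by-level tree builder inside $N$, the paper simulates the known tree strategy \cref{lem:TreeStrat} on $N$ with time $t'=t\tilde n^2/4n^2$, justified by \cref{lem:simulated_length}. The neighbourhood size $\tilde n$ must be $o(b)$ yet $\omega((n^2/t)^{m/(m+1)})$ for that lemma's hypotheses to hold. This is exactly where the term $(n^2/t)^{m/(m+1)}$ enters.
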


With this theorem, we recover the result of \citet{ILS24} for the diamond.
It also provides tight results for some relevant families of graphs, such as triangular books.
The proof is analogous to that of \cref{thm:wheels_main}; for completeness, we include it in \cref{appendix}.

Even though our methods lead to tight results for constructing a copy of $K_4$, they do not suffice for obtaining tight results for larger cliques.
Still, they can be used to obtain non-trivial upper and lower bounds on the ``budget threshold''.
As an example, we include our results for the particular case of~$K_5$.
See \cref{fig:K4} for a visual representation of this result, compared with the ``budget thresholds'' for a few other small graphs.

\begin{theorem}\label{thm:K5_main}
    For all $t\in[M]$, if
    \begin{equation}\label{equa:K5lowerbound}
        t=o\left(n^{3/2}\right)\qquad\text{ or }\qquad b=o\left(\max\left\{\frac{n^{15}}{t^{9}},\frac{n^{3}}{t^{3/2}}\right\}\right),
    \end{equation}
    then for any $(t,b)$-strategy a.a.s.~$B_t$ does not contain a copy of~$K_5$.
    On the other hand, if
    \begin{equation}\label{equa:K5upperbound}
        t\geq b=\omega\left(\max\left\{\frac{n^{12}}{t^{7}},\left(\frac{n^{2}}{t}\right)^{5/3}\right\}\right),
    \end{equation}
    then there exists a successful $(t,b)$-strategy for constructing a copy of~$K_5$.
\end{theorem}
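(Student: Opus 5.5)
The proof has two independent halves: a lower bound via a general counting lemma, and an upper bound via an explicit strategy.

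\textbf{Lower bound.} I would derive \eqref{equa:K5lowerbound} from the general lower-bound machinery announced for \cref{sect:lower_bounds} (\cref{lem:max_no_copies}), in the same way as \cref{cor:Wheel_lower_bound}. The principle is this. Builder's graph $B_t$ has at most $b$ edges, and every purchased edge is a uniformly random pair at the moment it is offered. So for a copy of $K_5$ to appear, some edge $e$ must close a copy at the moment it is offered. The key quantity is the expected number of pairs $e$ that would complete a $K_5$ in the current $B_i$, summed over $i\le t$.
\begin{itemize}
\item At each step this is at most the maximum number of $K_5^-$ copies in a graph with at most $b$ edges, divided by $M$.
\item That maximum is controlled by both $b$ and the constraint $B_i\subseteq G_i$, where $G_i$ is a typical $G(n,i)$ graph.
\end{itemize}
Two regimes then give the two terms.
\begin{itemize}
\item \emph{The term $n^3/t^{3/2}$.} Using only the budget, a graph with $b$ edges has $O(b^{5/2})$ copies of $K_5^-$, by Alon's bound on subgraph counts, since the fractional edge cover number of $K_5^-$ is $5/2$. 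Summing over $t$ steps gives $t\,b^{5/2}/n^2$, which is $o(1)$ when $b=o(n^{4/5}t^{-2/5})$. This is not the stated term. The stated term instead comes from the subgraph $K_4$ that the last vertex attaches to.
\item \emph{The term $n^{15}/t^9$.} Here the copies of $K_5$ in $G_t$ itself are sparse, and the lemma bounds how many dense substructures Builder can ever have.
\end{itemize}
Rather than recompute these exponents by hand, I would invoke \cref{lem:max_no_copies} with $F=K_5$, choosing in each case the subgraph $H\subseteq K_5$ and the order of edge arrivals that optimise the bound.
\begin{itemize}
\item The condition $t=o(n^{3/2})$ is the threshold for the existence of a $K_4$ in $G(n,t)$, which would already be necessary; in fact $K_5$ appears in $G(n,t)$ only at $n^{8/5}$. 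I would check which exponent the lemma actually yields.
\item For the two stated terms, I would take $H=K_4$ followed by the last vertex: $n^{15}/t^9$, and $(n^2/t)^{3/2}$ scaled, i.e.\ $n^3/t^{3/2}$.
\end{itemize}
Verifying that these two choices produce exactly the exponents in \eqref{equa:K5lowerbound} is a routine computation.

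\textbf{Upper bound.} I would use a staged strategy modelled on the wheel strategy (\cref{thm:WheelStrategy}), splitting time into phases of length $t/5$.
\begin{enumerate}
\item \emph{Phase 1.} Fix a set $A$ of vertices and buy all offered edges inside $A$ until $B$ contains many triangles. Alternatively, build a structure around a hub vertex $v$: buy edges at $v$ to get a star of size $s$.
\item \emph{Phase 2.} Buy edges inside the neighbourhood $N(v)$ to create many triangles there.
\item \emph{Phase 3.} Extend to a $K_4$ inside $N(v)$, which gives a $K_5$ together with $v$.
\end{enumerate}
Concretely, the plan is to find a vertex $v$ with star neighbourhood $S$ of size $s$, and then to build a $K_4$ inside $S$ using the $K_4$ strategy restricted to the random process on $S$. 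That inner process runs for effective time about $t s^2/n^2$ on $s$ vertices. Applying the $K_4$ result (\cref{thm:wheels_main} with $k=4$) to it, the budget is
\[
s+\max\Bigl\{\frac{s^8}{(ts^2/n^2)^5},\ \frac{s^2}{ts^2/n^2}\Bigr\},
\]
which must be optimised over $s$. The condition on $s$ is $ts^2/n^2\gg s^{4/3}$. Optimising gives the two terms $n^{12}/t^7$ and $(n^2/t)^{5/3}$, and I would check that these match. The analysis must also account for the inner process being a random subsequence of the outer one; this is handled by concentration.

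\textbf{Main obstacle.} The main obstacle is the correct use of the lower-bound lemma. In particular, I need to confirm that the maximal-copies argument does not require $B_i$ to be adaptive-independent. This should be handled inside \cref{lem:max_no_copies} via a union over steps and Markov's inequality.
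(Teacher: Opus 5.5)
\textbf{The upper bound has a gap: one hub is not enough.} Your single-hub reduction cannot work for $n^{3/2}\ll t\lesssim n^{8/5}$, and this range is part of the statement. For example, at $t=n^{1.55}$ we have $\max\{n^{12}/t^7,(n^2/t)^{5/3}\}=n^{1.15}$, so $b=n^{1.2}$ must suffice.

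The hub's neighbourhood has size $s=O(t/n)$. Together with your own feasibility condition $ts^2/n^2\gg s^{4/3}$, this forces $t\gg n^{8/5}$. More intrinsically, the expected number of copies of $K_5$ in $G_t$ through a fixed vertex is $\Theta(t^{10}/n^{16})=o(1)$ in this range. So the $K_4$ you want inside $N(v)$ is a.a.s.\ never even offered, whatever budget you allow. Your optimisation produces $n^{12}/t^7$ only formally. With $s=\Theta(t/n)$ and $t'=\Theta(t^3/n^4)$ one does get $s^8/t'^5=\Theta(n^{12}/t^7)$. But the requirement $t'\geq b'=\omega(s^8/t'^5)$ of the $K_4$ theorem is equivalent to $t^{10}\gg n^{16}$.

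\textbf{The missing idea is to grow many stars in parallel}, as in Case~1 of the proof of \cref{thm:K5strategy}:
\begin{itemize}
\item Take a set $X$ of $\lceil n^{16}/r^{10}\rceil$ hubs, roughly the reciprocal of the per-hub expectation above, with $r=o(t)$ close to $t$.
\item Buy all $X$--$V$ edges for time $t/2$.
\item Then buy every offered edge lying inside $N(x)$ for some $x\in X$.
\end{itemize}
This costs $O(n^{12}t^3/r^{10})=o(b)$, and the expected number of $K_4$'s bought inside hub neighbourhoods is $\Theta((t/r)^{10})=\omega(1)$. Concentration comes from the second moment method. One first uses \cref{lem:coupling} to replace the two stages by independent binomial random graphs. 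One then shows that a.a.s.\ every $3$- or $4$-set of $V$ lies in the neighbourhoods of at most five hubs. This bounds the number of pairs of overlapping $K_4$'s and gives $\operatorname{Var}(Z)=(1+o(1))\mathbb{E}[Z]$.

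Above $n^{8/5}$ your single-hub argument is fine; it is the paper's Case~2, with \cref{lem:simulated_length} controlling the simulated process. Taking $s=\Theta(t/n)$, it also covers $n^{8/5}\ll t\le n^{13/8}$, a range the paper assigns to Case~1. There, as far as I can check, the paper's argument has a problem when $t\geq n^{8/5+\varepsilon}$ and $b$ is close to $n^{12}/t^7$:
\begin{itemize}
\item the requirement $n^{12}t^3/r^{10}=o(b)$ forces $r>n^{8/5}$, so $|X|=1$;
\item the estimate $|X|\le 2n^{16}/r^{10}$ used for the Stage~2 budget is then unavailable;
\item buying a whole neighbourhood costs $\Theta(t^3/n^4)\gg b$.
\end{itemize}
So the natural split is many hubs for $t=O(n^{8/5})$ and your single hub above that.

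\textbf{Lower bound.} This follows the paper's route, namely \cref{cor:K4_lower_bound} with $r=5$, but no choice of a subgraph $H$ or of an arrival order is involved. \cref{lem:max_no_copies} with $F=K_5$ gives directly $\nc(K_5)\le\eta\, b\min\{b,np\}^3p^6$. Bounding the minimum by $np$, respectively by $b$, gives $O(\eta\, bt^9/n^{15})$ and $O(\eta\, b^4t^6/n^{12})$. These are $o(1)$ under the two stated conditions for a suitable $\eta=\omega(1)$.

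Your justification of the condition $t=o(n^{3/2})$ is wrong. $K_4$ appears in $G_t$ around $t=n^{4/3}$ and $K_5$ around $t=n^{3/2}$; the exponent $8/5$ belongs to $K_6$. So ``no $K_4$ in $G_t$'' is false for $n^{4/3}\ll t\ll n^{3/2}$. Use the first moment for $K_5$ directly: the expected number of copies in $G_t$ is $\Theta(t^{10}/n^{15})=o(1)$.

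Adaptivity is not an obstacle. The lemma's proof conditions on the history at each step and then applies Markov's inequality.
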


For the lower bound~\eqref{equa:K5lowerbound}, we obtain a more general statement which holds for cliques of arbitrary size (see \cref{cor:K4_lower_bound}).
On the other hand, the upper bound~\eqref{equa:K5upperbound} is harder to generalise, and thus here we only consider the case of~$K_5$; see \cref{thm:K5strategy} for the corresponding statement.
We believe that the upper bound should give the correct behaviour.

\begin{conjecture}\label{conj:K5}
    For all $t\in[M]$, if
    \[t=o\left(n^{3/2}\right)\qquad\text{ or }\qquad b=o\left(\max\left\{\frac{n^{12}}{t^{7}},\left(\frac{n^{2}}{t}\right)^{5/3}\right\}\right),\]
    then for any $(t,b)$-strategy a.a.s.~$B_t$ does not contain a copy of~$K_5$.
\end{conjecture}

\begin{figure}
\begin{tikzpicture}[scale=1]
    \begin{axis}[
    %unit vector ratio*=1 1 1,
    axis lines = middle,
    x=5cm,
    y=5cm,
    xlabel={$\log_{n}t$},
    ylabel={$\log_{n}b$},
    label style = {below left},
    legend style={at={(1.3,0.75)}},
    xmin=0, xmax=1.35, 
    xtick={0.001,1/5,1/3,2/5,1/2,3/5,5/8,1},
    xticklabels={$1$, , $4/3$, , $3/2$, , $\ \ \ \ \ \ \ \ 13/8$, $2$},
    ymin=0, ymax=1.7,
    ytick={0.001,1/3,2/5,1/2,3/5,5/8,1,6/5,4/3,3/2},
    yticklabels={$0$, $1/3$, $2/5$, $1/2$, $3/5$,, $1$, $6/5$, $4/3$, $3/2$}
    ]
        \addplot[blue, ultra thick, domain=0:1/3] {1-2*x};
        \addplot[blue, ultra thick, domain=1/3:1] {1/2-x/2};
        \addplot[olive, ultra thick, domain=1/5:2/5] {2-4*x};
        \addplot[olive, ultra thick, domain=2/5:1] {2/3-2*x/3};
        \addplot[red, ultra thick, domain=1/3:1/2] {3-5*x};
        \addplot[red, ultra thick, domain=1/2:1] {1-x};
        \addplot[purple, ultra thick, domain=1/2:5/8] {5-7*x};
        \addplot[purple, ultra thick, domain=5/8:1] {5/3-5*x/3};
        \addplot[purple, ultra thick, dashed, domain=1/2:3/5] {6-9*x};
        \addplot[purple, ultra thick, dashed, domain=3/5:1] {3/2-3*x/2};
        \addplot[gray, thick, domain=0:2/3] {x+1};
        \addplot[gray, thick, dashdotted, domain=0:1] {x};
        \addplot[gray, dotted, domain=0:1/2] {3/2};
        \addplot[gray, dotted, domain=0:1/3] {4/3};
        \addplot[gray, dotted, domain=0:1/5] {6/5};
        \addplot[gray, dotted, domain=0:1/2] {1/2};
        \addplot[gray, dotted, domain=0:1/3] {1/3};
        \addplot[gray, dotted, domain=0:2/5] {2/5};
        \addplot[gray, dotted, domain=0:3/5] {3/5};
        \addplot[gray, dotted, domain=0:5/8] {5/8};
        \addplot[gray, dotted] coordinates {(1/2, 0) (1/2, 3/2)};
        \addplot[gray, dotted] coordinates {(1/3, 0) (1/3, 4/3)};
        \addplot[gray, dotted] coordinates {(2/5, 0) (2/5, 2/5)};
        \addplot[gray, dotted] coordinates {(1/5, 0) (1/5, 6/5)};
        \addplot[gray, dotted] coordinates {(3/5, 0) (3/5, 3/5)};
        \addplot[gray, dotted] coordinates {(5/8, 0) (5/8, 5/8)};
        \legend{$K_3\text{,}\,K_3^+\text{,}\,C_4$,,$K_4^-$,,$K_4$,,$K_5$ upper bound,,$K_5$ lower bound,,$x\mapsto x$,$x\mapsto x-1$}
    \end{axis}
\end{tikzpicture}
\caption{A depiction of the optimal budget $b$ for successful $(t,b)$-strategies for~\textcolor{red}{$K_4$}, as given by \cref{thm:wheels_main}, compared with the optimal budget for its cyclic subgraphs (\textcolor{blue}{$K_3$}, \textcolor{blue}{$K_3^+$} and \textcolor{blue}{$C_4$} are due to \citet{FKM25}, and \textcolor{olive}{$K_4^-$} is due to \citet{ILS24}).
Additional depiction of the upper bound and lower bound on the optimal budget $b$ for successful $(t,b)$-strategies for~\textcolor{purple}{$K_5$}, as given by \cref{thm:K5_main}.}
\label{fig:K4}
\end{figure}
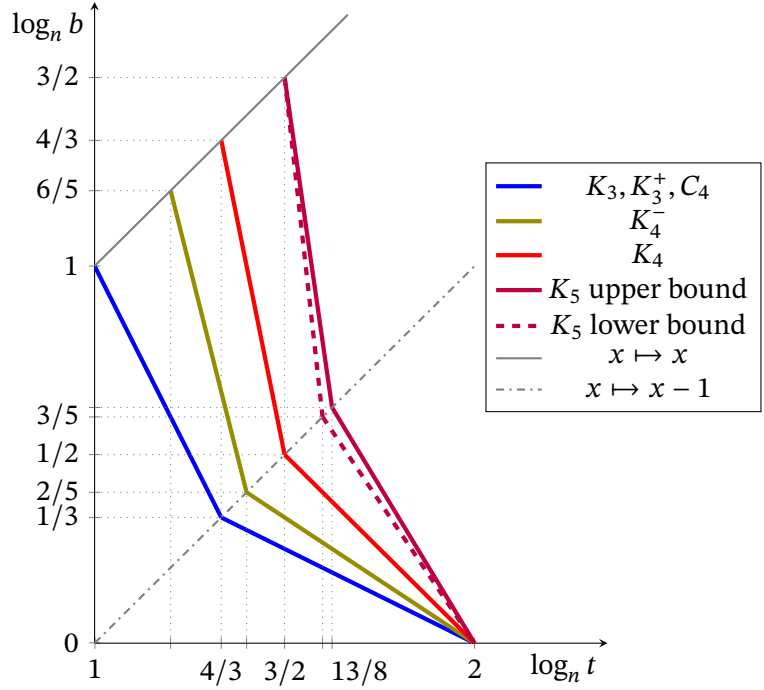

\section{Notation and tools}\label{sect:tools}

Most of our graph theoretic notation is standard.
Given a graph $G$ and a set $U\subseteq V(G)$, we denote by~$G[U]$ the subgraph of $G$ \defi{induced} by $U$, that is, the graph with vertex set $U$ whose edges are all those edges of $G$ which are contained in $U$.
The number of edges of this induced subgraph is denoted by~$e_G(U)$, and we denote $e(G)\coloneqq e_G(V(G))$.
We also denote $v(G)\coloneqq|V(G)|$.
Given a graph~$G$ and a vertex $v\in V(G)$, the \defi{neighbourhood} of $v$ in $G$ is the set $N_G(v)\coloneqq\{u\in V(G):\{u,v\}\in E(G)\}$, and the \defi{degree} of~$v$ is defined as $\deg_G(v)\coloneqq|N_G(v)|$.
The minimum vertex degree of~$G$ is denoted by~$\delta(G)$.
Given two graphs~$G$ and~$H$ on the same vertex set and some set $S\subseteq V(G)$, we denote by $G\setminus H$ the graph on the same vertex set with edge set $E(G)\setminus E(H)$, and by $G-S$ the graph on vertex set $V(G)\setminus S$ obtained from~$G$ by deleting all edges of~$G$ that intersect~$S$.
For a positive integer~$n$ and $p\in[0,1]$, we write $G(n,p)$ to denote the probability distribution on all (labelled) $n$-vertex graphs which results from sampling a graph by including each of its~$M=\inbinom{n}{2}$ possible edges independently with probability~$p$.

We will use the following standard Chernoff bound (see, e.g., the book of \citet[Corollary~2.3 and Theorem~2.10]{JLR}).

\begin{lemma}[Chernoff bound]\label{lem:chernoff}
    Let $X$ be the sum of $n$ independent Bernoulli random variables, or a~hypergeometric random variable, and let $\mu \coloneqq \mathbb{E}[X]$.
    Then, for all $\delta \in [0,1]$, we have that 
    \[\mathbb{P}[|X-\mu|\geq\delta\mu]\leq 2\nume^{-\delta^2\mu/3}.\]
\end{lemma}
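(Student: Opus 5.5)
This lemma is quoted from \citet[Corollary~2.3 and Theorem~2.10]{JLR}, so in the paper it is simply cited; if one wanted a self-contained argument, I would use the standard exponential moment method and treat the two tails separately, combining them with a union bound at the end. The plan has four steps.

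\emph{Bernoulli sums.} Let $X=\sum_{i=1}^n X_i$ with $X_i$ independent Bernoulli random variables with parameters $p_i$, so that $\mu=\sum_i p_i$. For every real $\lambda$, independence and $1+x\leq\nume^x$ give
\[\mathbb{E}[\nume^{\lambda X}]=\prod_{i=1}^n\bigl(1+p_i(\nume^\lambda-1)\bigr)\leq\exp\bigl(\mu(\nume^\lambda-1)\bigr).\]

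\emph{Upper tail.} Apply Markov's inequality to $\nume^{\lambda X}$ with $\lambda=\log(1+\delta)$. This yields
\[\mathbb{P}[X\geq(1+\delta)\mu]\leq\exp(-\mu\varphi(\delta)),\qquad\text{where }\varphi(\delta)\coloneqq(1+\delta)\log(1+\delta)-\delta.\]
It then remains to check the elementary inequality $\varphi(\delta)\geq\delta^2/3$ for $\delta\in[0,1]$. Set $g(\delta)\coloneqq\varphi(\delta)-\delta^2/3$. Then $g(0)=0$ and $g'(\delta)=\log(1+\delta)-2\delta/3$. Since $g'$ is concave on $[0,1]$, with $g'(0)=0$ and $g'(1)=\log 2-2/3>0$, we get $g'\geq0$ on $[0,1]$, and hence $g\geq0$ there.

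\emph{Lower tail.} Apply Markov's inequality to $\nume^{-\lambda X}$ with $\lambda=-\log(1-\delta)$ (for $\delta=1$, take the limit). This gives
\[\mathbb{P}[X\leq(1-\delta)\mu]\leq\exp(-\mu\psi(\delta)),\qquad\text{where }\psi(\delta)\coloneqq(1-\delta)\log(1-\delta)+\delta.\]
Comparing Taylor series shows $\psi(\delta)\geq\delta^2/2\geq\delta^2/3$. Adding the two tail bounds gives the claimed $2\nume^{-\delta^2\mu/3}$.

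\emph{Hypergeometric case.} Let $X$ be hypergeometric, counting successes in $n$ draws without replacement. I would invoke Hoeffding's comparison theorem: $\mathbb{E}f(X)\leq\mathbb{E}f(Y)$ for every continuous convex $f$, where $Y\sim\mathrm{Bin}(n,p)$ has the same mean $\mu$. Taking $f(x)=\nume^{\pm\lambda x}$ shows that the moment generating function bound above still holds for $X$, so the rest of the argument goes through unchanged.

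The main obstacle is this comparison step. The cleanest route is to represent sampling without replacement as a conditional expectation (a martingale/convex-order argument) of sampling with replacement, and then apply Jensen's inequality. This is the only non-routine ingredient; the remaining steps are calculus.
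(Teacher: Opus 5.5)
Your argument is correct: the moment generating function bound, the choices $\lambda=\log(1+\delta)$ and $\lambda=-\log(1-\delta)$, the elementary inequalities $\varphi(\delta)\geq\delta^2/3$ and $\psi(\delta)\geq\delta^2/2$ on $[0,1]$, and the transfer to the hypergeometric case via Hoeffding's convex-order comparison with a binomial of the same mean all check out. The paper gives no proof of this lemma and only cites \citet[Corollary~2.3 and Theorem~2.10]{JLR}; your proof is essentially the standard argument behind those cited results.
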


Consider the random graph process $(G_0,G_1,\ldots,G_M)$ on vertex set $[n]$.
Let $\tilde{n}=\tilde{n}(n)\in[n]$ and $m\coloneqq\inbinom{\tilde{n}}{2}$, and let $U\subseteq[n]$ be a set of vertices of size $|U|=\tilde{n}$.
While running the random graph process, we can restrict our attention to the offered edges that are contained in $U$ to \defi{simulate} the random graph process on vertex set $U$.
Formally, this is achieved by defining a new sequence of random graphs $(G^U_0,G^U_1,\ldots,G^U_{m})$ as follows.
First, we let $G^U_0$ be the empty graph on vertex set $U$.
Then, while running the random graph process, we let $i_1^U<\ldots<i_{m}^U$ denote the (random) times at which the offered edge $e_{i_j^U}$ is contained in~$U$.
For each $j\in[m]$, we set $G^U_j\coloneqq G^U_{j-1}\cup\{e_{i_j^U}\}$.
It is clear that $(G^U_0,G^U_1,\ldots,G^U_{m})$ is then distributed like the random graph process on $U$.

It is often useful to refer to different \emph{segments} of the random graph process.
The \defi{segment} of the random graph process of length $t\in[M]$ starting at time $j\in\{0\}\cup[M-t]$ is the (random) sequence of graphs $(\tilde{G}_0,\tilde{G}_1,\ldots,\tilde{G}_t)\coloneqq(G_j\setminus G_j,G_{j+1}\setminus G_j,\ldots,G_{j+t}\setminus G_j)$.
We remark that, if the outcome of the random graph process up to time~$j$ has not been revealed, then the segment of the random graph process of length~$t$ starting at time~$j$ has the same distribution as the segment of length~$t$ starting at time~$0$, that is, $(\tilde{G}_0,\tilde{G}_1,\ldots,\tilde{G}_t)\sim(G_0,G_1,\ldots,G_t)$.
Restricting the simulation of the random graph process on a set $U\subseteq[n]$ of size $|U|=\tilde{n}$ to the segment of length $t$ starting at time $j$ leads to a \emph{random} segment of the random graph process on~$U$.
More precisely, the outcome is the sequence of graphs $(\tilde{G}^U_0,\tilde{G}^U_1,\ldots,\tilde{G}^U_{|K|})$ given by taking $K\coloneqq\{k\in[m]:i_k^U\in[j+t]\setminus[j]\}$, letting $k_0\coloneqq\min\{k\in K\}-1$ (or $k_0\coloneqq0$ if $K$ is empty), and defining $\tilde{G}^U_0$ as the empty graph on $U$, and $\tilde{G}^U_\ell\coloneqq\tilde{G}^U_{\ell-1}\cup\{e_{i_{k_0+\ell}^U}\}$ for each $\ell\in[|K|]$.
We will refer to the segment of the random graph process on $U$ obtained when simulating the random graph process on~$U$ during the segment of length~$t$ starting at time~$j$ (that is, the sequence $(\tilde{G}^U_0,\tilde{G}^U_1,\ldots,\tilde{G}^U_{|K|})$ above) as the \defi{$(U,j,t)$-random graph process} (sometimes shortened to $(U,j,t)$-RGP); note, in particular, that the $([n],j,t)$-RGP corresponds to the segment itself.
The \defi{length} of the $(U,j,t)$-RGP, denoted $\ell(U,j,t)$, is the random variable $|K|$.

It will be useful to have some control over the length of the $(U,j,t)$-RGP for different choices of~$U$.
In fact, we will want this in some situations where the set $U$ itself depends on the outcomes of part of the `global' random graph process.\COMMENT{Instead of this, it would suffice if we could prove that the statement holds for every set $U$ of size $\tilde{n}$. However, if we use Chernoff bounds, then we cannot beat the needed union bound. $\checkmark$}
The following lemma encompasses the situations we will encounter later.

\begin{lemma}\label{lem:simulated_length}
    Let $\tilde{n}=\tilde{n}(n)\in[n]$, $t_1=t_1(n)\in[M]$, and $t_2=t_2(n)\in[M]$ be such that $\tilde{n}n\leq t_1=o(n^2)$, $t_1\tilde{n}^2=\omega(n^2)$, and $t_2\tilde{n}^2=\omega(n^2)$.
    Let $j\in[M-t_2]\setminus[t_1]$ and fix a vertex $x\in[n]$.
    Then a.a.s.\ $\deg_{G_{t_1}}(x)\geq \tilde{n}$ and, letting $U$ denote the set of the first $\tilde{n}$ neighbours of $x$ throughout the random graph process, we have that $t_2\tilde{n}^2/2n^2\leq\ell(U,j,t_2)\leq 3t_2\tilde{n}^2/2n^2$.\COMMENT{One can obtain a stronger statement, where one obtains the same conclusion for every set $U$ of size $\tilde{n}$ contained within a neighbourhood of $x$. $\checkmark$}
\end{lemma}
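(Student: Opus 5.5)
We prove the two parts separately. Both rest on the fact that each edge of $\inbinom{[n]}{2}$ is equally likely to appear at any given position of the process.

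\emph{The degree of $x$.} The number of edges of $G_{t_1}$ incident to $x$ is hypergeometric: we draw $t_1$ edges out of $M$, of which $n-1$ contain $x$. Its mean is
\[
\mu = \frac{t_1(n-1)}{M} = \frac{2t_1}{n}(1+o(1)) \geq 2\tilde{n}(1+o(1)),
\]
using $t_1 \geq \tilde{n} n$. The hypotheses also force $\tilde{n}\to\infty$, since $t_1=o(n^2)$ and $t_1\tilde{n}^2=\omega(n^2)$ together give $\tilde{n}^2=\omega(1)$. Hence $\mu\to\infty$, and \cref{lem:chernoff} with $\delta=1/3$ shows that a.a.s.\ $\deg_{G_{t_1}}(x)\geq\tilde{n}$. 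On this event $U$ is determined by $G_{t_1}$. Moreover $U$ is contained in $N_{G_{t_1}}(x)$, because the first $\tilde{n}$ neighbours of $x$ all arrive by time $t_1$.

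\emph{The length $\ell(U,j,t_2)$.} We condition on the whole history $G_{t_1}$ with $\deg_{G_{t_1}}(x)\geq\tilde{n}$, which fixes $U$. Write $E_U\coloneqq\inbinom{U}{2}$, so $|E_U|=\inbinom{\tilde{n}}{2}$. Let $A\coloneqq E_U\cap E(G_j)$ be the set of pairs in $U$ already offered by time $j$; note $j>t_1$. Given $G_j$, the next $t_2$ edges form a uniformly random $t_2$-subset of the $M-j$ remaining pairs. So $\ell(U,j,t_2)$ is hypergeometric with conditional mean
\[
\mu' = \frac{t_2\bigl(|E_U|-|A|\bigr)}{M-j}.
\]

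The main obstacle is controlling $|A|$ and $M-j$, because $j$ is arbitrary in $[M-t_2]\setminus[t_1]$. For instance, if $j$ is close to $M$, then most pairs inside $U$ are already gone. Here I see two options.

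\begin{itemize}
\item Reinterpret the statement so that $j\leq\eps M$, or more naturally $j=O(t_1+t_2)=o(n^2)$, which matches the intended use, where segments start right after $t_1$.
\item Argue directly with the ratio. Conditional on $U$, the pairs inside $U$ other than those already offered by time $t_1$ behave like uniform pairs among the unoffered ones. Then $|E_U|-|A|$ is concentrated around $|E_U|(M-j)/M$, and this factor cancels with $M-j$ in $\mu'$.
\end{itemize}

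The second option gives
\[
\mu' \approx \frac{t_2|E_U|}{M} = \frac{t_2\tilde{n}(\tilde{n}-1)}{n(n-1)} = (1+o(1))\,\frac{t_2\tilde{n}^2}{n^2}.
\]
To make it work, I would first bound $|E_U\cap E(G_{t_1})|$. Its expectation is at most roughly $\inbinom{\tilde{n}}{2}\cdot O(t_1/n^2)$ beyond the $\tilde{n}-1$ correlated pairs through $x$, so it is $o(|E_U|)$ by Markov's inequality. Next, given $G_{t_1}$, the pairs of $E_U$ not yet offered appear during steps $t_1+1,\dots,j$ as a hypergeometric sample. By Chernoff, their count is concentrated around the expected proportion $(j-t_1)/(M-t_1)$, provided $|E_U|=\Theta(\tilde{n}^2)\to\infty$ and $M-j\geq t_2$ is large enough that the remaining count $|E_U|(M-j)/M$ tends to infinity.

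The remaining count is comparable to $t_2\tilde{n}^2/n^2=\omega(1)$, so it does tend to infinity. This also gives $\mu'=\omega(1)$. A final application of \cref{lem:chernoff} with $\delta=1/3$ to $\ell(U,j,t_2)$ then yields
\[
\frac{t_2\tilde{n}^2}{2n^2} \leq \ell(U,j,t_2) \leq \frac{3t_2\tilde{n}^2}{2n^2}
\]
a.a.s., after absorbing the $(1+o(1))$ factors into the slack between $1/2$, $1$ and $3/2$.
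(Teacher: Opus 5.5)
Your argument is correct if you take your second option; the first would only prove a weaker lemma. Its last step, however, is more roundabout than the paper's.

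The paper also controls $Y=e_{G_{t_1}}(U)$. It uses Chernoff, but your Markov bound works equally well since $t_1=o(n^2)$. It then conditions only on the first $t_1$ steps and uses exchangeability: given them, the remaining pairs arrive in uniformly random order. So $e_{j+1},\ldots,e_{j+t_2}$ form a uniformly random $t_2$-subset of the $M-t_1$ unoffered pairs, for every admissible $j$. Thus $\ell(U,j,t_2)$ is directly hypergeometric with mean $t_2\bigl(\inbinom{\tilde n}{2}-Y\bigr)/(M-t_1)=(1\pm o(1))t_2\tilde n^2/n^2$, and a single Chernoff bound finishes. The ``main obstacle'' of large $j$ only arises because you condition on $G_j$.

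Your two-stage route does handle all $j\le M-t_2$, and it even gives concentration conditionally on a typical history up to time $j$, which the application does not need. Make sure, though, that the intermediate Chernoff bound is applied to the number $R_j$ of pairs of $E_U$ still unoffered at time $j$. This is hypergeometric with mean $\bigl(\inbinom{\tilde n}{2}-Y\bigr)(M-j)/(M-t_1)\ge(1-o(1))t_2\tilde n^2/n^2=\omega(1)$. Your wording instead suggests applying it to the number offered during $(t_1,j]$. For $j$ close to $M$, relative concentration of that count says nothing about $R_j$.

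A small point: no pairs through $x$ lie in $E_U$. Conditioning on the $x$-edges only means that the other $t_1-X$ edges are uniform among pairs avoiding $x$.
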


\begin{proof}
    Consider first the $([n],0,t_1)$-RGP; during this segment, we are interested in the neighbourhood of~$x$ and in controlling the number of offered edges contained within this neighbourhood (particularly, within the first $\tilde{n}$ neighbours of $x$ in the random graph process).
    More precisely, let~$X\coloneqq|N_{G_{t_1}}(x)|$, let 
    \[i^*\coloneqq\begin{cases}
        t_1&\text{if }X<\tilde{n},\\
        \min\{i\in[t_1]:|N_{G_i}(x)|=\tilde{n}\}&\text{otherwise},
    \end{cases}\]
    let $N^*\coloneqq N_{G_{i^*}}(x)$ and $X^*\coloneqq|N^*|=\min\{X,\tilde{n}\}$, and let $Y\coloneqq e_{G_{t_1}}(N^*)$.
    Then, define the events $\mathcal{N}\coloneqq\{t_1/n\leq X\leq 3t_1/n\}$, $\mathcal{N}^*\coloneqq\{X^*=\tilde{n}\}\supseteq\mathcal{N}$\COMMENT{Since $t_1\geq n\tilde{n}$, if $\mathcal{N}$ holds, then $X^*=\min\{X,\tilde{n}\}\geq\min\{t_1/n,\tilde{n}\}=\tilde{n}$. $\checkmark$} and $\mathcal{E}\coloneqq\{Y\leq\tilde{n}^2t_1/M\}$.
    
    First, reveal $N_{G_{t_1}}(x)$ (this includes revealing the times at which the edges $xv$ with $v\in N_{G_{t_1}}(x)$ were offered, but not any other edge of the process).
    Note that $\mathbb{E}[X]=(n-1)t_1/M\geq2\tilde{n}$.
    Since $X$ follows a hypergeometric distribution and $\tilde{n}=\omega(1)$\COMMENT{This is ensured by the assumptions, as $t_1=o(n^2)$ but $t_1\tilde{n}^2=\omega(n^2)$. $\checkmark$}, by~\cref{lem:chernoff} we have that $\mathbb{P}[\overline{\mathcal{N}}]\leq\nume^{-\Omega(\tilde{n})}=o(1)$, so~$\mathcal{N}$ holds a.a.s.
    
    Now reveal the remaining edges of the $([n],0,t_1)$-RGP; as the choice of each offered edge is made independently, we have that, upon conditioning on an arbitrary value for $X$, the variable $Y$ follows a~hypergeometric distribution with
    \[\mathbb{E}[Y\mid X]=\binom{X^*}{2}\frac{t_1-X}{M-X}.\]
    In particular, as $t_1/n\geq \tilde{n}=\omega(1)$, for each integer $k\in[t_1/n,3t_1/n]$, conditioning on $X=k$ implies that $X^*=\tilde{n}$ and thus we have that 
    \[\mathbb{E}[Y\mid X=k]=(1\pm o(1))\frac{\tilde{n}^2t_1}{2M}.\]
    As $\tilde{n}^2t_1=\omega(n^2)$, it follows from \cref{lem:chernoff} that 
    $\mathbb{P}[Y>\tilde{n}^2t_1/M\mid X=k]\leq\nume^{-\Theta(\tilde{n}^2t_1/M)}=o(1)$.
    Since~$\mathcal{N}$ holds a.a.s., by the law of total probability, it follows that~$\mathcal{E}$ holds a.a.s.\ too.\COMMENT{This is using the law of total probability: 
    \[\mathbb{P}[\overline{\mathcal{E}}]=\sum_{k=0}^{n-1}\mathbb{P}[\overline{\mathcal{E}}\mid X=k]\mathbb{P}[X=k]\leq o(1)+\sum_{k=\lceil t_1/n\rceil}^{\lfloor 3t_1/n\rfloor}\mathbb{P}[\overline{\mathcal{E}}\mid X=k]\mathbb{P}[X=k]=o(1)+\nume^{-\Theta(\tilde{n}^2t_1/M)}\sum_{k=\lceil t_1/n\rceil}^{\lfloor 3t_1/n\rfloor}\mathbb{P}[X=k]=o(1),\]
    where the first inequality follows since $\mathcal{N}$ holds a.a.s. $\checkmark$}

    Now condition on the event that $\mathcal{N}$ (and thus also $\mathcal{N}^*$) and $\mathcal{E}$ hold, which occurs a.a.s.; from now on, all probabilistic statements refer to this conditional space.
    Note that $U=N^*$ in this conditional space.
    Note, moreover, that the sequence of edges presented during the $([n],j,t_2)$-RGP\COMMENT{Note that we have not verified that $j$ is well defined.
    If $[M-t_2]\setminus[t_1]=\varnothing$ and we cannot choose any $j$, then the statement is vacuously true. $\checkmark$} is a uniformly random sequence of $t_2$ distinct edges from $\inbinom{[n]}{2}\setminus E(G_{t_1})$, which is a set of size $(1-o(1))M$.
    As $\ell(U,j,t_2)$ follows a hypergeometric distribution with
    \[\mathbb{E}[\ell(U,j,t_2)]=\left(\binom{\tilde{n}}{2}-e_{G_{t_1}}(U)\right)\frac{t_2}{M-t_1}=(1\pm o(1))\tilde{n}^2\frac{t_2}{n^2}\] 
    (where the last equality uses the facts that we conditioned on $\mathcal{E}$, that $\tilde{n}=\omega(1)$, and that $t_1=o(n^2)$), \cref{lem:chernoff} implies that
    \[\mathbb{P}\left[\ell(U,j,t_2)\notin\left[\frac{t_2\tilde{n}^2}{2n^2},\frac{3t_2\tilde{n}^2}{2n^2}\right]\right]\leq\nume^{-\Theta(t_2\tilde{n}^2/n^2)}=o(1).\qedhere\]
\end{proof}

In the same way that we can use the `global' random graph process to simulate a random graph process on a~subset $U\subseteq[n]$, even when restricted to segments of the global random graph process, we can also \defi{simulate} any strategy on this subset.
More precisely, we can use the global random graph process to simulate the random graph process on~$U$ and then run a strategy designed for this (simulated) random graph process.
By this we mean that Builder discards every edge offered (by the global random graph process) which is not contained in~$U$, and for edges contained in~$U$ she considers the strategy as if it was run on the (simulated) random graph process on $|U|$ vertices.
Given that, when simulating the random graph process on~$U$ during a segment of the global random graph process, we do not know the length of the resulting segment, we will simulate $(t,b)$-strategies on a~subset $U$ for certain values of~$t$ and~$b$, where we will choose $t$ in such a way that a.a.s.\ the simulated segment has length at least~$t$ (like in  \cref{lem:simulated_length}).
In the unlikely event that the simulated segment has length less than $t$, this strategy is still well defined: it is run until the end of the simulated segment, possibly not reaching its desired outcome, but certainly not going over the (simulated) allotted time or budget.

The following coupling lemma will also be useful in the proofs of our $1$-statements.

\begin{lemma}[{\cite[Lemma~3.3]{EGNS25a}}]\label{lem:coupling}
    Let $k\in\mathbb{N}$ be fixed.
    Let $ t_1, \ldots, t_k \in [M] $ be such that $t_i=\omega(1)$ for every $i\in[k]$ and $t_i=o(M)$ for every $i\in[k-1]$.
    Let $s_0\coloneqq 0$ and, for each $ i \in [k] $, set $ s_i \coloneqq \sum_{j=1}^i t_j $.
    Let $ G_0, G_1, G_2, \ldots $ denote a random graph process on $ [n] $ and, for each $ i \in [k] $, let $ \hat{G}_i \coloneqq G_{s_i} \setminus G_{s_{i - 1}} $.
    
    Then for each $i\in[k]$ there exist $p_i=(1 - o(1)) t_i / M$ and $\mybar{p}_i=o(t_i / M)$ such that there exists a coupling of random graphs $ (H_i, \hat{H}_i, \mybar{H}_i)_{i \in [k]} $ satisfying that
    \begin{enumerate}[label=$(\mathrm{\roman*})$]
        \item\label{lem:couplingproperty1} $ H_i \sim G(n, p_i) $ and $ \mybar{H}_i \sim G(n, \mybar{p}_i) $ for each $i\in[k]$;
        \item\label{lem:couplingproperty2} $(\hat{H}_i)_{i\in[k]}$ is distributed like $(\hat{G}_i)_{i\in[k]}$;
        \item\label{lem:couplingproperty3} the graphs $\{H_i,\mybar{H}_i:i\in[k]\}$ are mutually independent;
        \item\label{lem:couplingproperty4} for each $i\in[k]$, the graphs $H_i$ and $\mybar{H}_i$ are independent of $(\hat{H}_j)_{j\in[i-1]}$, and
        \item\label{lem:couplingproperty5} a.a.s.\ $ H_i\setminus\bigcup_{j=1}^{i-1}\hat{H}_j \subseteq \hat{H}_i \subseteq H_i\cup \mybar{H}_i$ for all $i \in [k] $.
    \end{enumerate}
\end{lemma}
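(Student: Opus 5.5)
The statement is a coupling lemma, and I would prove it by constructing the coupling directly. First sample the independent graphs $H_i\sim G(n,p_i)$ and $\mybar{H}_i\sim G(n,\mybar{p}_i)$ for $i\in[k]$, then build the segments $\hat{H}_1,\hat{H}_2,\ldots$ of a random graph process one after another from them. The parameters are
\[
p_i\coloneqq(1-\eps_i)\,t_i/M, \qquad \mybar{p}_i\coloneqq 2\eps_i\,t_i/M,
\]
where $\eps_i\to0$ slowly enough that $\eps_i^2 t_i=\omega(1)$ (possible since $t_i=\omega(1)$). With this choice, \cref{lem:chernoff} controls all the relevant edge counts. Properties (i), (iii) and (iv) then hold by construction, since each $\hat{H}_i$ will be a function of $H_1,\mybar{H}_1,\ldots,H_i,\mybar{H}_i$ and some extra independent randomness.

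The construction of segment $i$, given $\hat{H}_1,\ldots,\hat{H}_{i-1}$, goes as follows. Let $A_i\coloneqq\inbinom{[n]}{2}\setminus\bigcup_{j<i}\hat{H}_j$ be the set of still-available pairs, of size $M-s_{i-1}$. Put $X_i\coloneqq H_i\cap A_i$.
\begin{itemize}
\item If $|X_i|\leq t_i$ and $|(\mybar{H}_i\cap A_i)\setminus X_i|\geq r_i\coloneqq t_i-|X_i|$, let $\hat{H}_i$ be $X_i$ together with a uniformly random $r_i$-subset of $(\mybar{H}_i\cap A_i)\setminus X_i$.
\item Otherwise, let $\hat{H}_i$ be $X_i$ together with a uniformly random $r_i$-subset of $A_i\setminus X_i$ if $|X_i|\le t_i$, or a uniformly random $t_i$-subset of $X_i$ if $|X_i|>t_i$.
\end{itemize}
Finally, order the edges of $\hat{H}_i$ uniformly at random; this ordering is the $i$-th segment.

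The key point is property (ii). Conditional on the past, $H_i$ is independent of it, so $X_i$ is a binomial random subset of $A_i$. Hence, conditional on $|X_i|$, the set $X_i$ is a uniform subset of $A_i$ of that size. Likewise $\mybar{H}_i\cap A_i\setminus X_i$ is a binomial subset of $A_i\setminus X_i$, so a uniform $r_i$-subset of it is a uniform $r_i$-subset of $A_i\setminus X_i$. The same holds in the fallback cases. Thus $\hat{H}_i$ is a uniform $t_i$-subset of $A_i$ given the past, and with the random ordering the segments have exactly the law of $(\hat{G}_i)_{i\in[k]}$.

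It remains to check (v): a.a.s.\ we are always in the first case, where $X_i\subseteq\hat{H}_i\subseteq H_i\cup\mybar{H}_i$ holds by definition.
\begin{itemize}
\item For $i<k$ we have $s_{i-1}=o(M)$, so $\mathbb{E}|X_i|=(1-\eps_i)(1-o(1))t_i$. For $i=k$, $|A_k|=M-s_{k-1}$ with $s_{k-1}=o(M)$ as well.
\item Since $\eps_i^2t_i=\omega(1)$, Chernoff gives a.a.s.\ $|X_i|\le t_i$ and $r_i\le 2\eps_i t_i$, after making the $o(1)$ terms dominated by $\eps_i$; if necessary, redefine $p_i$ with a factor $(1-\eps_i-\eta)$ absorbing $s_{i-1}/M$.
\item Similarly, a.a.s.\ $|(\mybar{H}_i\cap A_i)\setminus X_i|\ge(1-o(1))2\eps_i t_i\ge r_i$. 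To have slack here, take $\mybar{p}_i\coloneqq3\eps_it_i/M$.
\end{itemize}
A union bound over the fixed number $k$ of segments finishes the argument.

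The main obstacle is the bookkeeping in the second bullet. One must choose $\eps_i$ so that it simultaneously dominates the Chernoff fluctuations $t_i^{-1/2}$ and the loss $s_{i-1}/M=o(1)$ from previously used pairs. This is where the hypothesis $t_j=o(M)$ for $j<k$ enters, and where $\mybar{p}_i=o(t_i/M)$ is forced by $\eps_i\to0$.
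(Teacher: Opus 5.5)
\textbf{Gap: property (v) for the last segment when $t_k$ is large.} The paper gives no proof of this lemma. It imports it from \cite{EGNS25a}, remarking only that (iv) follows from that proof and that the bound on $t_k$ imposed there is unnecessary.

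Your sequential construction is the natural one, and your checks of (i)--(iv) are correct. In particular, the argument that $\hat H_i$ is a uniform $t_i$-subset of $A_i$ given the past is sound.

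The problem is (v) for $i=k$. Conditional on $X_i$, the set $(\mybar{H}_i\cap A_i)\setminus X_i$ is a $\mybar{p}_i$-random subset of $A_i\setminus X_i$. That set has $M-s_{i-1}-|X_i|=M-s_i+r_i$ elements. So your bound $|(\mybar{H}_i\cap A_i)\setminus X_i|\geq(1-o(1))2\eps_i t_i$ silently requires $s_i=o(M)$.
\begin{itemize}
\item For $i<k$ this follows from the hypotheses.
\item For $i=k$ it requires $t_k=o(M)$, which is not assumed. Contrary to your last paragraph, the smallness hypothesis is also used for $j=k$.
\item If $M-s_k\geq cM$ for a constant $c>0$, taking $\mybar{p}_k\coloneqq 3\eps_k t_k/(cM)=o(t_k/M)$ repairs the step.
\item If $M-s_k=o(M)$, the construction genuinely fails. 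For example, let $k\geq2$ and $s_k=M$. The first case then requires $A_k\setminus X_k\subseteq\mybar{H}_k$. But $|A_k\setminus X_k|\sim\mathrm{Bin}(t_k,1-p_k)$ has mean at least $t_k s_{k-1}/M\to\infty$, because $p_k\leq t_k/M$. Since $\mybar{p}_k=o(1)$, the inclusion fails a.a.s.
\end{itemize}

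\textbf{Some restriction on $t_k$ is unavoidable.} No coupling can prove the statement exactly as written. Take $k=1$ and $t_1=M-1$.
\begin{itemize}
\item Property (v) forces a.a.s.\ $e(H_1)\leq M-1$, i.e.\ $p_1^M\to0$, hence $(1-p_1)M\to\infty$.
\item Property (v) also forces $H_1\cup\mybar{H}_1$ to miss at most one pair. But the number of missed pairs is $\mathrm{Bin}\bigl(M,(1-p_1)(1-\mybar{p}_1)\bigr)$, with mean at least $(1-p_1)M/2\to\infty$.
\end{itemize}
So the paper's remark that the bound on $t_k$ from \cite{EGNS25a} is unnecessary cannot be taken literally.

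You should state the hypothesis your argument actually uses, e.g.\ $t_k=o(M)$, or $s_k\leq(1-c)M$ with the rescaled $\mybar{p}_k$. Either suffices for every application in the paper, where the lemma is used with $k=2$ and $t_1=t_2=t/2$ for $t=o(M)$.
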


We remark that property \ref{lem:couplingproperty4} above is not explicitly stated in \cite[Lemma~3.3]{EGNS25a}, but it follows immediately from its proof.
Similarly, \cite[Lemma~3.3]{EGNS25a} also imposes an upper bound on $t_k$, but it is not actually needed in its proof.

\section{Lower bounds}\label{sect:lower_bounds}

In this section, we prove our results about lower bounds on ``budget thresholds''.
Throughout the section, when we consider subgraphs of a fixed graph~$F$, or copies of these subgraphs in the random graph process, we always mean \emph{labelled} subgraphs or copies.

The following lemma provides an upper bound on the maximum number of copies of a fixed connected graph~$F$ that Builder can construct (a.a.s.) given any time and budget restrictions~$t$ and~$b$.
The formula that it provides corresponds to the following simple intuition: 
In order to count copies of~$F$, we may root them at some edge and count the number of choices for this edge (for which there are at most $b$ choices).
Then, each other vertex of this copy can be chosen iteratively as a neighbour of a previously fixed vertex (there are roughly at most $\Theta(\min\{b,np\})$ choices for each such vertex, where $p=t/M$, and we must fix $v(F)-2$ other vertices).
This essentially yields an upper bound for the number of copies of a spanning tree of~$F$.
In order for this tree to be completed to a copy of~$F$, each edge of~$F$ that we have not considered so far must at least be offered, which occurs with probability~$p$.
The proof of the lemma formalises this simple intuition.
We remark here that, in order to prove the lower bounds for their results, \citet{ILS24} already used some \emph{ad hoc} arguments for counting copies of subgraphs of the graphs they are trying to construct.
Moreover, a similar intuition to the one discussed above was also key in proving results for $F$-factors in the work of \citet{EGNS25a}.

Let $\nc(F,\cS, n, t, b)$ be the number of copies of a labelled graph~$F$ that Builder purchases when following a~$(t,b)$-strategy $\cS$ in the random graph process on vertex set $[n]$.
Note that $\nc(F,\cS, n, t, b)$ is a~random variable that depends on the outcome of the random graph process.
We write simply $\nc(F)$ when the other parameters are clear from the context.
Moreover, for an edge $e\in E(F)$, let $\nc(F,e,\cS, n, t, b)$, or $\nc(F,e)$ for short, denote the number of copies~$F'$ of~$F$ purchased by Builder for which the edge~$e'\in E(F')$ corresponding to $e$ is purchased last among all the edges in~$E(F')$.

\begin{lemma}\label{lem:max_no_copies}
Let $F$ be a fixed labelled connected graph with $v(F)\geq2$.
Let $t=t(n)\in [M]$ and $b=b(n)\in[t]$.
Set $p\coloneqq t/M$ and let $\eta=\omega(1)$ be a function that grows arbitrarily slowly with $n$.
For every $(t,b)$-strategy~$\cS$, we have that a.a.s.
\[ \nc(F,\cS, n, t, b) \leq \eta \cdot b\cdot \min\{b, np\}^{v(F)-2}p^{e(F)-v(F)+1}.\]
\end{lemma}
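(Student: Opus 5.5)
We argue by induction on $e(F)$ (or on the structure of $F$), tracking the copies through the time at which their last edge is bought. First, a.a.s.\ every vertex has degree at most $3np$ in $G_t$ when $np=\omega(\log n)$. For smaller $np$ we instead use a cruder bound on the number of vertices of degree exceeding $\eta' np$, or note that $\min\{b,np\}$ is then small anyway. Also, every vertex has degree at most $b$ in $B_t$. Hence every vertex has Builder-degree at most $D\coloneqq\min\{b,3np\}$, up to the usual care for small $np$. Since $\nc(F)=\sum_{e\in E(F)}\nc(F,e)$ and $e(F)$ is fixed, it suffices to bound each $\mathbb{E}[\nc(F,e)]$ (restricted to a high-probability event) by $O(b\,D^{v(F)-2}p^{e(F)-v(F)+1})$ and then apply Markov's inequality with the slowly growing factor $\eta$.

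Fix $e=uv\in E(F)$. We split according to whether $F-e$ (edge deletion) is connected.

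\emph{Case 1: $e$ is a bridge.} Then $F-e$ has two components $F_1\ni u$ and $F_2\ni v$, each a tree-like piece. Every copy counted by $\nc(F,e)$ is determined by a purchased edge $e'$ (at most $b$ choices) together with copies of $F_1$ and $F_2$ rooted at its endpoints. Counting rooted copies by exploring spanning trees from the root, each additional vertex costs a factor $D$. Each non-tree edge, whose endpoints are already fixed, must have been offered. We bound this deterministically by $D$ per vertex and probabilistically by $p$ per extra edge, giving the claimed form. The extra edges are where we need an expectation argument.

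\emph{Case 2: $e$ lies on a cycle.} Then $F-e$ is connected with the same vertex set. Each copy $F'$ counted by $\nc(F,e)$ arises as follows: a copy of $F-e$ is fully purchased at some time, and later the edge $e'$ joining the images of $u,v$ is offered and purchased. We condition on the history up to each time step $i$. The offered edge $e_{i+1}$ is uniform among the remaining $(1-o(1))M$ pairs. So the expected number of copies completed at step $i+1$ is at most $(1+o(1))\nc_i(F-e)/M$, where $\nc_i(F-e)$ counts the current copies of $F-e$. Summing over $i\le t$ gives
\[\mathbb{E}[\nc(F,e)]\le(1+o(1))\,p\,\mathbb{E}\bigl[\max_i \nc_i(F-e)\bigr]\le(1+o(1))\,p\,\mathbb{E}[\nc(F-e)],\]
using monotonicity of the count in $i$. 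Induction on $e(F)$ then gives $\nc(F-e)\le \eta\, b D^{v(F)-2}p^{e(F)-v(F)}$, so multiplying by $p$ yields the bound. The base case is a spanning tree, where the count is at most $b\cdot D^{v(F)-2}$ deterministically: root at an edge and extend along Builder-neighbourhoods.

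The main obstacle is that induction only provides an a.a.s.\ bound, whereas the martingale/expectation step needs an expectation. We would fix this by carrying a stronger induction statement: bound the expectation of the count truncated on the good degree event. On the good event the deterministic tree bound caps everything by a polynomial, so the bad event contributes $o(1)$ after multiplying by its probability. Alternatively, we could use a stopping time at the first moment $\nc_i(F-e)$ exceeds its target bound. A second delicate point is the regime $np=O(\log n)$, where the max-degree bound of $3np$ fails. Here we would either use that the number of copies of a fixed tree in $G_t$ concentrates around $n^{v}p^{v-1}$, or replace $D$ by the sum-over-degree-products argument in expectation directly. In either case the slowly growing $\eta$ absorbs the constants.
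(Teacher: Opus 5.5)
Your treatment of a non-bridge edge $e$ matches the paper's Case~3 and is fine. Given the history, the last purchased edge is offered uniformly among at least $M/2$ remaining pairs, so the expected number of completions is at most $2p\,\nc(F-e)$; you then induct, using a stopping time to reconcile expectations with the a.a.s.\ inductive bound.

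The gap is your Case~1. There $F_1$ and $F_2$ are arbitrary connected graphs, not trees. Your claim that each of their non-tree edges contributes a factor $p$ ``probabilistically'' is the whole content of the lemma, and you give no mechanism for it. Builder chooses her tree edges adaptively, knowing which pairs have already been offered, so the closing edges are not present independently with probability $p$.

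Bounding rooted copies at the endpoints of $e'$ uniformly and multiplying by the $b$ choices for $e'$ also fails. Take $b=t$, let Builder buy everything, and let $n^{-1}\ll p\ll n^{-2/3}$. Then some vertex of $B_t=G_t$ lies in a triangle, although $\min\{b,np\}^2p=o(1)$.

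The missing idea is to use, for bridges as well, that $e$ is the last purchased edge.
\begin{itemize}
\item For a non-pendant bridge, both sides are already built when $e'$ is offered. So the expected number of completions is at most $2p\,\nc(F_1)\nc(F_2)$. By induction this is of order $b^2\min\{b,np\}^{v(F)-4}p^{e(F)-v(F)+2}$. You then need the inequality $bp\le\min\{b,np\}^2$, valid since $1\le b\le t\le n^2p$, to reach the target.
\item For a pendant edge $e=uv$ with $v$ a leaf, the paper combines two bounds. Deterministically, $\nc(F,e)\le b\,\nc(F-v)$. By the last-edge argument there are at most $n$ completing pairs per copy of $F-v$, so by Markov, for any $\gamma=\omega(1)$, a.a.s.\ $\nc(F,e)\le\gamma\,np\,\nc(F-v)$. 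Together these produce the factor $\min\{b,np\}$.
\end{itemize}

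This pendant-edge argument also makes any degree bound on $G_t$ unnecessary, and your reliance on one is the second problem. For $np=o(\log n)$, $\Delta(G_t)$ is not $O(np)$; the lemma covers all $t$, including $t=o(n)$, where $np=o(1)$. So Builder-degrees are not bounded by $D=\min\{b,3np\}$, and your deterministic tree bound $b\,D^{v(F)-2}$ is off by factors that an arbitrarily slowly growing $\eta$ cannot absorb.

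Your first suggested repair does not help. The number of copies of a $v$-vertex tree in $G_t$ is of order $n^vp^{v-1}=\Theta(t\,(np)^{v-2})$. This exceeds the target $\eta\,b\,(np)^{v-2}$ by a factor of order $t/b$. The factor $np$ per new vertex has to come from the expected number of ways the last-purchased pendant edge can be offered, not from a maximum degree.

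Reorganised this way, the argument becomes an induction over connected subgraphs $H\subseteq F$. You split according to whether the last purchased edge is pendant, a non-pendant bridge, or a non-bridge. This works for all $t\le M/2$, with no degree concentration needed, and the case $t>M/2$ is trivial.
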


\begin{proof}
If $t > M/2$ the bound holds trivially, since $p\in(1/2,1]$ and the maximum number of copies of~$F$ in any subgraph of~$K_n$ with at most~$b$ edges is at most $O(b\cdot\min\{b,n\}^{v(F)-2})$.
From now on, we assume that $t \in [M/2]$.

Let $\gamma\coloneqq \big(\eta / e(F)!\big)^{1/e(F)}=\omega(1)$.
We will show that, for each connected $H \subseteq F$ with $v(H)\geq2$, a.a.s.
\begin{equation}\label{equa:induction_h}
    \nc(H,\cS, n, t, b) \leq e(H)!\cdot  \gamma^{e(H)} \cdot b\cdot \min\{b, np\}^{v(H)-2}p^{e(H)-v(H)+1}
\end{equation}
by induction on the number of edges of $H$.
Note that \eqref{equa:induction_h} with $H\coloneqq F$ gives precisely the conclusion of the lemma.
If~$H$ consists of a single edge, then $\nc(H) \leq b \leq \gamma b$ since Builder can claim at most $b$ edges.
Suppose now that $e(H) > 1$ and that, for all labelled connected graphs $J$ with $1\leq e(J) < e(H)$, we have that a.a.s.\
\begin{equation}\label{equa:induction}
    \nc(J,\cS, n, t, b) \leq e(J)!\cdot  \gamma^{e(J)} \cdot b\cdot \min\{b, np\}^{v(J)-2}p^{e(J)-v(J)+1}.
\end{equation}
For each edge $e\in E(H)$, we are going to bound the number $\nc(H, e)$.
Fix an edge $e\in E(H)$.
We can distinguish three different cases.

\textbf{Case 1.}
Suppose $e$ is a pendant edge, that is, we have $e=uv$ with $d_H(v)=1$.
Let $J\coloneqq H-v$, and note that $J$ is a connected graph.
For each copy of~$J$, there are at most~$b$ ways for Builder to extend it to a copy of~$H$, so 
\begin{equation}\label{eq:pending_edge1}
    \nc(H,e) \leq b\cdot \nc(J).
\end{equation}
On the other hand, at each step $i \leq t$ of the random graph process, conditioned on $(G_0, \dots, G_{i-1})$, the expected value of the number $X_i$ of copies of~$H$ completed by the edge offered at time~$i$ and such that it plays the role of~$e$ in them is
\[ \mathbb{E}[X_i] \leq \nc(J, \cS,n, t,b) \frac{n}{M-i+1} \leq \frac{2n \cdot \nc(J)}{M},\]
where we used that $t \leq M/2$ and that at most $n$ of the $M-i+1$ remaining edges would complete a given copy of $J$ to a copy of $H$.
Since $\nc(H,e) \leq \sum_{i=1}^t X_i$, we conclude
that $ \mathbb{E}[\nc(H,e)] \leq 2 n \cdot t \nc(J) /M = 2 n \cdot \nc(J) \cdot p$.
By Markov's inequality, it follows that 
\begin{equation}\label{eq:pending_edge2}
\Pr[\nc(H,e) \geq \gamma \nc(J)\cdot np] \leq 2/\gamma=o(1).
\end{equation}
Combining \eqref{eq:pending_edge1} and \eqref{eq:pending_edge2} and using \eqref{equa:induction}, we thus have that a.a.s.
\begin{align}\label{eq:pending_edge}
    \nc(H, e) & \leq \gamma \cdot \nc(J) \cdot \min\{b, np\} \nonumber \\
    & \leq \gamma \cdot e(J)! \cdot \gamma^{e(J)}\cdot b\cdot \min\{b, np\}^{v(J)-2}p^{e(J)-v(J)+1}  \cdot \min\{b,np\} \nonumber \\
    & =\big(e(H)-1\big)!\cdot  \gamma^{e(H)}\cdot b\cdot \min\{b, np\}^{v(H)-2}p^{e(H)-v(H)+1}.
\end{align}

\textbf{Case 2.} Suppose $e$ is a cut-edge, but not a pendant edge.
Then $H\setminus\{e\}$ consists of two connected components, each containing at least one edge; we denote these components as~$J_1$ and~$J_2$.
As $b\leq t\leq n^2p$, we have that $bp \leq \min\{b^2, (np)^2\}$.
By a similar argument as in the previous case, since each copy of~$H$ in~$G_t$ completed by an edge corresponding to~$e$ must be obtained from one copy of~$J_1$ and one copy of~$J_2$, we have that
\[\mathbb{E}[\nc(H,e)]\leq 2t\frac{\nc(J_1)\cdot\nc(J_2)}{M}=2\nc(J_1)\cdot \nc(J_2)\cdot p.\]
Therefore, by Markov's inequality and \eqref{equa:induction}, a.a.s.
\begin{align}\label{eq:cut_edge}
    \nc(H, e) & \leq \gamma \cdot \nc(J_1) \cdot \nc(J_2) \cdot p \nonumber \\
    &\leq \gamma\cdot e(J_1)! \cdot e(J_2)! \cdot \gamma^{e(J_1) + e(J_2)}\cdot b^2\cdot \min\{b, np\}^{v(J_1)+v(J_2)-4}p^{e(J_1)+e(J_2)-v(J_1)-v(J_2)+3} \nonumber \\
    & \leq \big( e(H)-1\big)! \cdot \gamma^{e(H)}\cdot b^2\cdot \min\{b, np\}^{v(H)-4}p^{e(H)-v(H)+2} \nonumber \\
    & \leq \big( e(H)-1\big)! \cdot \gamma^{e(H)} \cdot b\cdot \min\{b, np\}^{v(H)-2}p^{e(H)-v(H)+1}.
\end{align}

\textbf{Case 3.} Suppose $e$ is not a cut-edge and let $J \coloneqq H\setminus\{e\}$.
In this case, as each copy of $H$ completed by an edge corresponding to $e$ may only arise from some copy of $J$, by a similar argument as in the previous two cases we have that 
\[\mathbb{E}[\nc(H,e)]\leq 2t\frac{\nc(J)}{M}=2\nc(J)\cdot p.\]
Thus, by Markov's inequality and \eqref{equa:induction}, a.a.s.
\begin{align}\label{eq:no_cut_edge}
    \nc(H, e) & \leq \gamma \cdot \nc(J) \cdot p \leq \gamma \cdot e(J)! \cdot \gamma^{e(J)} \cdot b\cdot \min\{b, np\}^{v(J)-2}p^{e(J)-v(J)+2} \nonumber \\
    & = \big( e(H) -1\big)! \cdot \gamma^{e(H)} \cdot b\cdot \min\{b, np\}^{v(H)-2}p^{e(H)-v(H)+1}.
\end{align}

Now, the desired upper bound \eqref{equa:induction_h} on $\nc(H,\cS, n, t, b)$ follows from \eqref{eq:pending_edge}, \eqref{eq:cut_edge} and \eqref{eq:no_cut_edge} by summing over all possible options for $e\in E(H)$.
\end{proof}

\Cref{lem:max_no_copies} can be used to obtain a lower bound on the minimum budget required for constructing copies of any fixed (not necessarily connected) graph~$F$, by simply verifying under which conditions for~$b$ we can guarantee that a.a.s.\ $\nc(F,\cS, n, t, b)=o(1)$ for every $(t,b)$-strategy~$\mathcal{S}$, which implies that a.a.s.\ there are no copies of~$F$.
We believe this lower bound is far from optimal in general, but it \emph{is} optimal when considering wheels.

\begin{corollary}\label{cor:Wheel_lower_bound}
    Let $k\geq4$ be an integer.
    For all $t\in[M]$, if
    \[t=o\left(n^{\frac{3}{2}-\frac{1}{2(k-1)}}\right)\qquad\text{ or }\qquad b=o\left(\max\left\{\frac{n^{3k-4}}{t^{2k-3}},\frac{n^{2}}{t}\right\}\right),\]
    then for any $(t,b)$-strategy a.a.s.~$B_t$ does not contain a copy of\/ $W_k$.
\end{corollary}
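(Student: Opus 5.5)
We apply \cref{lem:max_no_copies} with $F\coloneqq W_k$. Then we reduce the first alternative in the hypothesis (small $t$) to the second (small $b$). Recall that $W_k$ is connected with $v(W_k)=k$ and $e(W_k)=2(k-1)$, so $e(F)-v(F)+1=k-1$. With $p=t/M=\Theta(t/n^2)$, \cref{lem:max_no_copies} states that, for any $(t,b)$-strategy $\cS$ and any $\eta=\omega(1)$, a.a.s.
\[\nc(W_k)\leq \eta\cdot b\cdot\min\{b,np\}^{k-2}p^{k-1}.\]
Since $B_t$ contains a copy of $W_k$ if and only if $\nc(W_k)\geq1$, it suffices to show that the right-hand side is $o(1)$ for a suitable slowly growing $\eta$.

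\textbf{Step 1: two bounds.} We bound $\min\{b,np\}$ in the two obvious ways. First, $\min\{b,np\}\leq np$ gives
\[b(np)^{k-2}p^{k-1}=\Theta\bigl(b\,t^{2k-3}/n^{3k-4}\bigr)=\Theta(b/A),\qquad A\coloneqq \frac{n^{3k-4}}{t^{2k-3}}.\]
Second, $\min\{b,np\}\leq b$ gives
\[b^{k-1}p^{k-1}=\Theta\bigl((bt/n^2)^{k-1}\bigr)=\Theta\bigl((b/B)^{k-1}\bigr),\qquad B\coloneqq \frac{n^2}{t}.\]
Hence the main term is $O\bigl(\min\{b/A,(b/B)^{k-1}\}\bigr)$.

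\textbf{Step 2: small $b$.} Suppose $b=o(\max\{A,B\})$, and let $\rho\coloneqq b/\max\{A,B\}=o(1)$. For each $n$, either $\max\{A,B\}=A$, in which case $b/A=\rho$, or $\max\{A,B\}=B$, in which case $(b/B)^{k-1}=\rho^{k-1}\leq\rho$ once $\rho\leq 1$. In both cases the minimum in Step 1 is $O(\rho)$. We choose $\eta\coloneqq\rho^{-1/2}=\omega(1)$ (or, say, $\eta\coloneqq\min\{\rho^{-1/2},\log n\}$ to keep it slowly growing). Then a.a.s.\ $\nc(W_k)=O(\rho^{1/2})=o(1)$, and since $\nc(W_k)$ is a nonnegative integer, it equals $0$ a.a.s.

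\textbf{Step 3: small $t$.} Suppose $t=o\bigl(n^{\frac32-\frac{1}{2(k-1)}}\bigr)$. Note that $\frac32-\frac1{2(k-1)}=\frac{3k-4}{2k-2}$, so $t^{2k-2}=o(n^{3k-4})$, that is, $t=o(n^{3k-4}/t^{2k-3})=o(A)$. Since every strategy has $b\leq t$, we get $b=o(A)\subseteq o(\max\{A,B\})$, and Step 2 applies.

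The only delicate point is the bookkeeping in Step 2. The hypothesis controls only the maximum of $A$ and $B$, not each term separately, so we must check that the relevant term of the minimum is small for every $n$ along the sequence. We must also choose $\eta$ as a function of $\rho$ so that the application of \cref{lem:max_no_copies} is legitimate. Everything else is a direct exponent computation.
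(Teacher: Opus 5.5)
Your proof is correct and follows the paper's argument: apply \cref{lem:max_no_copies} to $W_k$ and bound $\min\{b,np\}$ by $np$ or by $b$, depending on which of $n^{3k-4}/t^{2k-3}$ and $n^2/t$ dominates. The differences are cosmetic. The paper handles the small-$t$ regime with a first-moment argument on $G_t$, whereas you observe that $b\leq t=o(n^{3k-4}/t^{2k-3})$ there, so that clause is subsumed by the budget clause. You also make the case split pointwise in $n$ via $\rho$ rather than splitting at $t=n^{3/2}$, which additionally covers sequences $t(n)$ that oscillate around $n^{3/2}$.
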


\begin{proof}
Let $p\coloneqq t/M$.
If $t=o(n^{3/2-1/2(k-1)})$, then a standard application of the first moment method shows that a.a.s.~$G_t$ does not contain a copy of~$W_k$, and so neither does $B_t$. %classical result of \citet{ER60} ensures that a.a.s.~$G_t$ does not contain a copy of~$W_k$, and so neither does $B_t$.
Thus, assume that \mbox{$t=\Omega(n^{3/2-1/2(k-1)})$} and let~$\mathcal{S}$ be a $(t, b)$-strategy. By \cref{lem:max_no_copies}, for any arbitrarily slowly growing function $\eta=\omega(1)$ we have that a.a.s.
\[ \nc(W_k) =  \nc(W_k,\cS,n,t,b) \leq \eta \cdot b \cdot \min\{b,np\}^{k-2} p^{k-1}.\]

Note that
\[\frac{n^{3k-4}}{t^{2k-3}}=\frac{n^{2}}{t}\iff t=n^{3/2}.\]
When $t=O(n^{3/2})$ we have that $n^{3k-4}/t^{2k-3}=\Omega(n^2/t)$.
For any $b=o(n^{3k-4}/t^{2k-3})$, choose some $\eta=\omega(1)$ with $\eta=o(1/bn^{k-2}p^{2k-3})$ (which exists as $n^{3k-4}/t^{2k-3}=\Theta(1/n^{k-2}p^{2k-3})$).
We then conclude that a.a.s.\COMMENT{We are using the trivial bound $\min\{b,np\}\leq np$. $\checkmark$}
\[ \nc(W_k) \leq \eta \cdot bn^{k-2}p^{2k-3} = o(1) < 1.\]
If instead $t=\omega(n^{3/2})$, then $n^2/t=\omega(n^{3k-4}/t^{2k-3})$.
Similarly as above, for any $b=o(n^2/t)=o(1/p)$, we choose some $\eta=\omega(1)$ with $\eta=o(1/b^{k-1}p^{k-1})$.
We conclude that a.a.s.\COMMENT{We are now using the trivial bound $\min\{b,np\}\leq b$. $\checkmark$}
\[\nc(W_k) \leq \eta \cdot b^{k-1}p^{k-1} = o(1) < 1.\qedhere\]
\end{proof}

While we do not believe that we can obtain results which are tight in general with our approach, it is instructive to see other applications.
Here, for simplicity, we write the result for cliques.
We remark that the case $r=3$ recovers the tight bound for constructing a triangle given by \citet{FKM25}, and the case $r=4$ gives a tight lower bound for the $0$-statement in the case $k=4$ of \cref{thm:wheels_main}.
The special cases $r\in\{3,4,5\}$ of this result are depicted in \cref{fig:K4}.

\begin{corollary}\label{cor:K4_lower_bound}
    Let $r\geq3$ be an integer.
    For all $t\in[M]$, if
    \[t=o\left(n^{2-\frac{2}{r-1}}\right)\qquad\text{ or }\qquad b=o\left(\max\left\{\frac{n^{r(r-2)}}{t^{\binom{r}{2}-1}},\left(\frac{n^{2}}{t}\right)^{(r-2)/2}\right\}\right),\]
    then for any $(t,b)$-strategy a.a.s.~$B_t$ does not contain a copy of $K_r$.
\end{corollary}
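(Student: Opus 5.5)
We follow the proof of \cref{cor:Wheel_lower_bound}, replacing $W_k$ by $K_r$ and applying \cref{lem:max_no_copies} with $F\coloneqq K_r$. Here $v(F)=r$ and $e(F)=\binom{r}{2}$, so the excess is $e(F)-v(F)+1=\binom{r}{2}-r+1=\binom{r-1}{2}$. Set $p\coloneqq t/M=\Theta(t/n^2)$.

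We first dispose of the case $t=o(n^{2-2/(r-1)})$. The expected number of copies of $K_r$ in $G_t$ is $O(n^rp^{\binom{r}{2}})=O\bigl((np^{(r-1)/2})^r\bigr)$. This is $o(1)$ exactly when $p=o(n^{-2/(r-1)})$, which is equivalent to $t=o(n^{2-2/(r-1)})$. By the first moment method, a.a.s.\ $G_t\supseteq B_t$ contains no copy of $K_r$. From now on we assume $t=\Omega(n^{2-2/(r-1)})$ and fix an arbitrary $(t,b)$-strategy $\cS$. By \cref{lem:max_no_copies}, a.a.s.
\[\nc(K_r)\leq \eta\cdot b\cdot\min\{b,np\}^{r-2}p^{\binom{r-1}{2}}\]
for any slowly growing $\eta=\omega(1)$. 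The task is to choose $\eta$ so that the right-hand side is $o(1)$, which gives a.a.s.\ no copy of $K_r$.

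The two terms in the maximum correspond to the two trivial bounds on the minimum, and we use whichever term is larger. First suppose $b=o(n^{r(r-2)}/t^{\binom{r}{2}-1})$. We use $\min\{b,np\}\leq np$, so the bound becomes $\eta\, b\, n^{r-2}p^{r-2+\binom{r-1}{2}}=\eta\, b\, n^{r-2}p^{\binom{r}{2}-1}$. Since $\binom{r}{2}-1=r-2+\binom{r-1}{2}$, the quantity $1/(n^{r-2}p^{\binom{r}{2}-1})$ is $\Theta(n^{r-2}n^{2\binom{r}{2}-2}/t^{\binom{r}{2}-1})=\Theta(n^{r(r-2)}/t^{\binom{r}{2}-1})$, using $r-2+r(r-1)-2=r^2-4+\ldots$ which gives exponent $r(r-2)$ after simplification. (I expect this exponent check to be the only delicate point, and it should come out as $r^2-2r$.) Hence we can pick $\eta=\omega(1)$ with $\eta=o(1/(bn^{r-2}p^{\binom{r}{2}-1}))$, and then $\nc(K_r)=o(1)$.

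Otherwise suppose $b=o((n^2/t)^{(r-2)/2})=o(p^{-(r-2)/2})$. We use $\min\{b,np\}\leq b$, so the bound becomes $\eta\, b^{r-1}p^{\binom{r-1}{2}}=\eta\,(bp^{(r-2)/2})^{r-1}$. Since $bp^{(r-2)/2}=o(1)$, we can again pick $\eta$ so that this is $o(1)$.

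Because the hypothesis is that $b$ is $o$ of the maximum of the two expressions, we handle it by splitting by case: $b$ is $o$ of the maximum, and hence $o$ of whichever term is larger. So it is enough to check that each term individually gives $o(1)$ via its corresponding bound on the minimum. This does not even require determining the crossover value of $t$, unlike the proof of \cref{cor:Wheel_lower_bound}. The argument is then complete, and the main step is really just the exponent bookkeeping.
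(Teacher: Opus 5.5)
Your proposal is correct and is essentially the paper's proof: the first moment method for $t=o(n^{2-2/(r-1)})$, then \cref{lem:max_no_copies} with $\min\{b,np\}\le np$ for the first term of the maximum and $\min\{b,np\}\le b$ for the second. Splitting by which term of the maximum is larger is the same as the paper's split at $t=n^{2-2/r}$.

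The only slip is in your exponent check, where $n^{r-2}$ belongs in the denominator. Correctly, $1/(n^{r-2}p^{\binom{r}{2}-1})=\Theta(n^{r(r-1)-2-(r-2)}/t^{\binom{r}{2}-1})$, and since $r(r-1)-2-(r-2)=r(r-2)$, the identity you claim does hold.
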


\begin{proof}
Let $p\coloneqq t/M$.
If $t=o(n^{2-2/(r-1)})$, then a standard application of the first moment method shows that a.a.s.\ $G_t$ does not contain a copy of~$K_r$, and thus neither does $B_t$. %classical result of \citet{ER60} implies that a.a.s.\ $G_t$ does not contain a copy of $K_r$, and thus neither does $B_t$.
Therefore, assume that \mbox{$t=\Omega(n^{2-2/(r-1)})$} and let~$\mathcal{S}$ be a $(t, b)$-strategy.
By \cref{lem:max_no_copies}, for any arbitrarily slowly growing function $\eta=\omega(1)$ we have that a.a.s.
\[ \nc(K_r) =  \nc(K_r,\cS,n,t,b) \leq \eta \cdot b \cdot \min\{b,np\}^{r-2} p^{\binom{r}{2}-r+1}.\]

Observe that
\[ \frac{n^{r(r-2)}}{t^{\binom{r}{2}-1}} = \left(\frac{n^{2}}{t}\right)^{(r-2)/2}\iff t = n^{2-2/r}.\]
When $t=O(n^{2-2/r})$ we have that $n^{r(r-2)}/t^{\binom{r}{2}-1}=\Omega((n^2/t)^{(r-2)/2})$.
For any $b=o(n^{r(r-2)}/t^{\binom{r}{2}-1})$, choose some $\eta=\omega(1)$ with $\eta=o(1/bn^{r-2}p^{\binom{r}{2}-1})$ (which exists as $n^{r(r-2)}/t^{\binom{r}{2}-1}=\Theta(1/n^{r-2}p^{\binom{r}{2}-1})$).\COMMENT{So when we further divide by $b$, which is asymptotically smaller than this, we obtain something that tends to infinity. $\checkmark$}
We then conclude that a.a.s.\COMMENT{We are using the trivial bound $\min\{b,np\}\leq np$. $\checkmark$}
\[ \nc(K_r) \leq \eta \cdot bn^{r-2}p^{\binom{r}{2}-1} = o(1) < 1.\]
If instead $t=\omega(n^{2-2/r})$, then $(n^2/t)^{(r-2)/2}=\omega(n^{r(r-2)}/t^{\binom{r}{2}-1})$.
Proceeding similarly as above, for any $b=o((n^2/t)^{(r-2)/2})=o(p^{-(r-2)/2})$, take $\eta=\omega(1)$ with $\eta=o(1/b^{r-1}p^{\binom{r}{2}-r+1})$.
We conclude that a.a.s.
\[\nc(K_r) \leq \eta \cdot b^{r-1}p^{\binom{r}{2}-r+1} = o(1) < 1.\qedhere\]
\end{proof}

We end this section with a lemma that is not strictly needed for our results, but could be of independent interest as a tool for proving lower bounds in future work.
Given a fixed graph $F$ and some integer $k \in \{0,1,\dots,e(F)\}$, it allows us to bound the probability that Builder can construct a copy of~$F$ in terms of the number of copies of all subgraphs of~$F$ on~$k$ edges that she can build.
The lemma can be seen as a formalisation and generalisation of the ideas in the lower bound proofs of \citet{ILS24}, in that one counts the number of some `intermediate' subgraphs $H$, which form `traps', and then estimates the probability that any of these traps is completed to a copy of the target graph $F$.

For any (not necessarily connected) labelled graph~$H$, any $\tilde{p} \in (0,1]$, and any non-negative integers $n$, $t$, and $b$, let $f(H,n,t,b, \tilde{p})$ denote the minimum integer~$\tilde{f}$ such that, for any $(t,b)$-strategy~$\cS$, the probability that the budget-constrained random graph process on~$[n]$ under~$\cS$ contains more than~$\tilde{f}$ copies of~$H$ is at most~$\tilde{p}$.

\begin{lemma}\label{lem:lower_bound_probability}
Let $F$ be a fixed labelled graph.
For any $t=t(n)\in[M]$ and $b=b(n)\in[t]$, let $p\coloneqq t/M$, let $\mathcal{S}$ be a $(t, b)$-strategy, and let $\mathcal{E}_F$ be the event that, when considering the budget-constrained random graph process under $\mathcal{S}$, the resulting graph $B_t$ contains a copy of~$F$.
Then, for any integer $k \in \{0,1,\dots,e(F)\}$ and any \mbox{$\tilde{p}=\tilde{p}(n) \in (0,1]$}, we have that
\begin{equation}\label{equa:prob_lower_bound}
    \mathbb{P}[\mathcal{E}_F] = O\Bigg(  \sum_{H\subseteq F,\, e(H)=k,\, \delta(H)\geq 1} f(H,n,t,b, \tilde{p}) n^{v(F)-v(H)}p^{e(F)-k}\Bigg) + O(\tilde{p}).
\end{equation}
\end{lemma}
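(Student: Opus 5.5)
Fix $k$ and $\tilde{p}$, and run the process under $\mathcal{S}$. The idea is a ``trap'' argument: look at the first moment at which Builder has purchased $k$ of the edges of a future copy of~$F$, and then bound the probability that the remaining $e(F)-k$ edges are all offered by time~$t$.

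If $\mathcal{E}_F$ holds, fix a copy $F'$ of $F$ in $B_t$. Order its edges by purchase time, and let $H'\subseteq F'$ be the subgraph formed by its first $k$ purchased edges, keeping only the vertices of $F'$ covered by these edges. Then $H'$ is a copy of some $H\subseteq F$ with $e(H)=k$ and $\delta(H)\geq1$. (For $k=0$, $H$ is the empty graph and we set $f\coloneqq1$, with the obvious interpretation.) The remaining $e(F)-k$ edges of $F'$ are all offered after the time $\tau$ at which the last edge of $H'$ was purchased, and before time $t$.

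Let $\mathcal{B}$ be the event that, for some $H\subseteq F$ with $e(H)=k$ and $\delta(H)\geq1$, $B_t$ contains more than $f(H,n,t,b,\tilde{p})$ copies of~$H$. There are only $O(1)$ such subgraphs $H$. By the definition of $f$, a union bound gives $\mathbb{P}[\mathcal{B}]=O(\tilde{p})$. Since $B_\tau\subseteq B_t$, outside $\mathcal{B}$ the total number of copies of each $H$ ever created is at most $f(H,n,t,b,\tilde{p})$.

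Next, for each copy $H'$ of each $H$, created at time $\tau_{H'}$, count the ways to extend it to a copy of $F$. We choose the $v(F)-v(H)$ remaining vertices, giving at most $n^{v(F)-v(H)}$ options, together with $O(1)$ choices of labelling. Each extension requires a fixed set of $e(F)-k$ further edges to be offered in $(\tau_{H'},t]$.

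Conditioned on the history up to $\tau_{H'}$, the probability that a fixed set of $e(F)-k$ not-yet-offered edges is all offered within the next at most $t$ steps is at most $(2t/M)^{e(F)-k}=O(p^{e(F)-k})$ when $t\leq M/2$. If $t>M/2$ then $p=\Theta(1)$, and the bound is trivial.

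Let $Z$ be the number of pairs (copy $H'$ of some $H$ created by Builder, extension of $H'$ whose missing edges are all offered by time~$t$). Then $\mathbb{P}[\mathcal{E}_F]\leq\mathbb{P}[\mathcal{B}]+\mathbb{P}[Z\geq1,\ \overline{\mathcal{B}}]$, and it remains to bound the second term by Markov's inequality.

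The main obstacle is that the number of copies $H'$ is random and depends on the strategy, so we cannot simply multiply. To handle this, truncate: only count the first $f(H,n,t,b,\tilde{p})$ copies of each $H$ created, in order of creation time. Outside $\mathcal{B}$, every copy of $H$ ever created is among these. Call the truncated count $Z'$, so that $Z\geq1$ and $\overline{\mathcal{B}}$ together imply $Z'\geq1$.

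For $Z'$, write the expectation as a sum over indices $j\leq f(H,n,t,b,\tilde{p})$ of the $j$-th created copy. The $j$-th copy is determined at a stopping time, and by the strong Markov property the conditional probability bound above applies to each of its extensions. This gives
\[\mathbb{E}[Z']\leq\sum_{H} f(H,n,t,b,\tilde{p})\cdot O\bigl(n^{v(F)-v(H)}\bigr)\cdot O\bigl(p^{e(F)-k}\bigr).\]
By Markov's inequality, $\mathbb{P}[Z'\geq1]\leq\mathbb{E}[Z']$. Combined with $\mathbb{P}[\mathcal{B}]=O(\tilde{p})$, this yields \eqref{equa:prob_lower_bound}.
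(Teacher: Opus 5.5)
Your proof is correct, but it organises the union bound differently from the paper.

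The paper pivots on the $(k+1)$-th edge of the copy of $F$. It sums over $H$, over the edge $e\in E(F)\setminus E(H)$ playing that role, and over the deterministic time $i\le t$ at which $e$ is offered. Given the history up to time $i-1$, there are at most $f(H,n,t,b,\tilde p)\,n^{v(F)-v(H)}$ candidate copies of the spanning graph $H'$ on $\bigcap_H\cF_H$. The edge $e_i$ completes a fixed candidate with probability $1/(M-i+1)\le 2/M$. The remaining $e(F)-k-1$ edges arrive by time $t$ with probability at most $p^{e(F)-k-1}$. Summing over $e$ and $i$ gives the term $2e(F)\,f(H,n,t,b,\tilde p)\,n^{v(F)-v(H)}p^{e(F)-k}$.

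You instead pivot on the moment the $k$-edge subgraph is completed. You control the random, strategy-dependent number of copies by truncating to the first $f(H,n,t,b,\tilde p)$ of them and conditioning at their creation times.

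The paper's route only conditions at fixed times. It therefore needs nothing beyond the uniformity of $e_i$ given $G_{i-1}$, and the cap on the number of copies enters through an event measurable at time $i-1$; that is how its ``rejecting'' step should be read. Your route instead relies on the strong Markov property at stopping times, with ties among copies completed by the same edge broken by a fixed rule. In return it needs no pivot edge and no special cases. The bound $\inbinom{M-\tau-s}{t-\tau-s}/\inbinom{M-\tau}{t-\tau}\le\bigl(\frac{t-\tau}{M-\tau}\bigr)^{s}\le p^{s}$ holds for every $t\le M$. So your restriction to $t\le M/2$ and the factor $2$ are unnecessary, and the case $k=e(F)$ is covered automatically.

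One point of precision: in the definition of $Z$ (and $Z'$), require the missing edges to arrive in $(\tau_{H'},t]$, not merely by time $t$. Otherwise the conditional estimate does not apply. Your earlier paragraph has this right, and it is exactly what $\mathcal{E}_F$ provides.
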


This lemma can be useful whenever there is some $k$ for which we have some good `with high probability' bounds on the number of copies of $k$-edge subgraphs of $F$ that Builder can build.
There is a clear trade-off between $f(H, n,t,b,\tilde{p})$ and $\tilde{p}$, the probability of the `bad' event that Builder can build many copies of some $k$-edge subgraph $H$.
One possible application of \cref{lem:lower_bound_probability} would be to give an alternative proof of the $r =4$ case of \cref{cor:K4_lower_bound}, by making use of \cref{lem:max_no_copies} to estimate $f(H,n,t,b,\tilde{p})$ for some $\tilde{p} = o(1)$ and for all $H\subseteq K_4$ with $e(H)=4$.
%However, this `alternative' proof is really essentially the same proof as the one given by applying Lemma~\ref{lem:max_no_copies} directly, since both proofs call Lemma~\ref{lem:max_no_copies} on subgraphs $H$ of $K_4$ and use its output in a very similar way, by considering buying~$H$ first, and then going over all options for the step~$i$ at which the next edge (after~$H$) could be bought.
It is conceivable that, for some graphs~$F$, one can obtain better bounds on $f(H,n,t,b,\tilde{p})$ for subgraphs $H$ of $F$ than what \cref{lem:max_no_copies} gives, in which case \cref{lem:lower_bound_probability} can give better results than \cref{lem:max_no_copies}.
We have not been able to find such graphs $F$, but we prove Lemma~\ref{lem:lower_bound_probability} here in the hopes that it could be useful in future work.

\begin{proof}[Proof of \cref{lem:lower_bound_probability}]
For each $H\subseteq F$, let $\cF_H$ be the event that there are at most $f(H,n,t,b,\tilde{p})$ copies of~$H$ in~$B_t$.
Then we have that
\begin{align*}
 \mathbb{P}[\cE_F] &\leq \mathbb{P}\left[\cE_F \cap  \bigcap_{H \subseteq F,\, e(H) = k,\, \delta(H)\geq 1} \cF_H \right] + \sum_{H\subseteq F,\, e(H) = k,\, \delta(H)\geq 1} \mathbb{P}\left[\overline{\cF}_H\right] \\
 &\leq \mathbb{P}\left[\cE_F \cap  \bigcap_{H \subseteq F,\, e(H) = k,\, \delta(H)\geq 1} \cF_H \right]  + \binom{e(F)}{k} \tilde{p}.
\end{align*}
Since $e(F)$ is a constant, we have that $\inbinom{e(F)}{k} \tilde{p} = O(\tilde{p})$, which accounts for the second term of the upper bound in the statement of the lemma.
For the rest of the proof, we focus on bounding the first term in the expression above, which amounts to bounding the probability of $\cE_F$ while being able to assume `for free' that $\cF_H$ holds for all subgraphs $H$ of $F$ with $k$ edges and no isolated vertices.
More specifically, for each subgraph $H$ of $F$ with $k$ edges and no isolated vertices and for each step~$i$ of the random graph process, we will consider revealing the first $i-1$ steps of the process and `rejecting' the outcome if the graph $G_{i-1}$ contains more than $f(H,n,t,b,\tilde{p})$ copies of~$H$.
Thus, in all outcomes that actually count towards our upper bound, we get to assume there are at most $f(H,n,t,b,\tilde{p})$ copies of~$H$ in~$G_{i-1}$.

If $t > M/2$, the conclusion of the lemma holds trivially, since the first term of the right-hand side in~\eqref{equa:prob_lower_bound} is either~$\Omega(1)$ or~$0$.\COMMENT{Under this assumption, if $f(H,n,t,b,
\tilde{p})\geq1$ for at least one choice of $H$, we have that $p$ is constant and so the term inside the $O$ is of order $\Omega(n^{v(F)-v(H)})$. $\checkmark$}
Furthermore, it can only be~$0$ if $f(H,n,t,b,
\tilde{p})=0$ for all subgraphs $H \subseteq F$ with $e(H)=k$ and $\delta(H)\geq1$ and, if that is the case, then indeed no copies of $F$ can be built as long as $\bigcap_{H \subseteq F,\, e(H) =k,\, \delta(H) \geq 1}\cF_H$ holds.
Hence, from now on we assume that $t \leq M/2$.
If $k=e(F)$, the result is also trivial for the same reason, so assume that $k<e(F)$.

Any copy of $F$ that Builder may eventually produce is constructed edge by edge.
We argue by a~union bound over all possible choices for the first $k$ edges of $F$ that Builder can purchase, and all choices for the $(k+1)$-th edge.
Let $H\subseteq F$ be a subgraph with exactly $k$ edges (which will play the role of the first~$k$ edges of~$F$ that Builder purchases) and $e\in E(F)\setminus E(H)$ (which will play the role of the the $(k+1)$-th edge of $F$ purchased by Builder).
Let $H'$ be the spanning subgraph of $F$ with $E(H') = E(H)$ (note that~$H'$ has $v(F) - v(H)$ isolated vertices).
Then note that $f(H',n,t,b,\tilde{p}) \leq f(H,n,t,b,\tilde{p}) n^{v(F) - v(H)}$.
For each step $i\in[t]$ of the random graph process, the number of candidates for copies of $H' \cup \{e\}$ in~$G_i$ such that $e$ is embedded into $G_i \setminus G_{i-1}$ is at most $f(H',n,t,b,\tilde{p})$.
Conditioned on~$G_{i-1}$, each such candidate is present in~$G_i$ with probability $1/(M-i+1)$.
The probability that the remaining $e(F) - k - 1$ edges are presented after time~$i$ and before time $t+1$ is at most\COMMENT{Note this is indeed ``at most'' as some of these edges may already have been presented, in which case this probability is $0$. $\checkmark$}
\[ \frac{\binom{M- i - (e(F)-k-1)}{t- i - (e(F)-k-1)}}{\binom{M-i}{t-i}} \leq \Bigg( \frac{t-i}{M-i}\Bigg)^{e(F)-k-1} \leq p^{e(F) -k - 1}.\]
All in all, by the union bound, we conclude that
\begin{align*}
\Pr\left[\mathcal{E}_F \cap \bigcap_{\substack{H\subseteq F\\ e(H)=k,\, \delta(H) \geq 1}}\cF_H\right] &\leq \sum_{\substack{H\subseteq F\\ e(H)=k,\, \delta(H) \geq 1}} \sum_{e \in E(F\setminus H)} \sum_{i=1}^t f(H,n,t,b,\tilde{p}) n^{v(F)-v(H)} \frac{1}{M-i+1} p^{e(F) - k-1}\\ 
& \leq \sum_{\substack{H\subseteq F\\ e(H)=k,\, \delta(H) \geq 1}} e(F) \cdot t \cdot f(H,n,t,b,\tilde{p}) n^{v(F) - v(H)} \frac{2}{M} p^{e(F)-k-1} \\ 
& \leq \sum_{\substack{H\subseteq F\\ e(H)=k,\, \delta(H) \geq 1}} 2e(F) f(H,n,t,b,\tilde{p}) n^{v(F)-v(H)} p^{e(F)-k},
\end{align*}
as desired.
\end{proof}

%%%%%%%%%%%%%%%%%%%%%%%%%%%%%%%%%%%%%%%%%%%
%%%%%%%%%%%%%%%%%%%%%%%%%%%%%%%%%%%%%%%%%%%

\section{Upper bounds}\label{sect:upper_bounds}

In this section, we provide the proofs for the $1$-statements of our theorems.
In other words, we want to show that, if the budget is sufficiently large (as a function of $t$ and $n$), then there exist successful $(t,b)$-strategies for containing different subgraphs.
We begin by considering wheels.

% \begin{theorem}\label{thm:K4strategy}
%     For all $t=\omega(n^{4/3})$ with $t\leq M$, if 
%     \[b=\omega\left(\max\left\{\frac{n^8}{t^5},\frac{n^2}{t}\right\}\right),\] 
%     there exists a successful $(t,b)$-strategy for constructing a copy of $K_4$.
% \end{theorem}

\begin{theorem}\label{thm:WheelStrategy}
    Let $k\geq4$ be a fixed integer.
    If 
    \[M\geq t\geq b=\omega\left(\max\left\{\frac{n^{3k-4}}{t^{2k-3}},\frac{n^2}{t}\right\}\right),\] 
    there exists a~successful $(t,b)$-strategy for constructing a copy of\/ $W_k$.
\end{theorem}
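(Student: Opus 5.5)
The plan is a three-phase strategy built around a single hub vertex~$x$ that will play the role of the centre of~$W_k$. Set $\ell\coloneqq k-1$, so the rim is a cycle~$C_\ell$, and split the time into three segments of lengths $t_1,t_2,t_3=\Theta(t)$.

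\emph{Phase 1 (star).} During the first segment, Builder buys the first $d$ edges incident to~$x$, producing a neighbourhood $U$ of size~$d$. This costs $d$, and by \cref{lem:simulated_length} it is available as long as $d\le t_1/n$.

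\emph{Phase 2 (paths inside $U$).} During the second segment, Builder simulates the random graph process on~$U$, which by \cref{lem:simulated_length} has length $\Theta(td^2/n^2)$. Inside it she runs a tree/path-building strategy in the spirit of \citet{FKM25}: she grows a rooted structure in~$U$ by buying only edges that extend current paths. The aim is to obtain many paths with $\ell-1=k-2$ edges, or more precisely many pairs of endpoints $\{u,v\}\subseteq U$ joined by such a path. Each path contributes $k-2$ spokes-plus-rim edges, since all its vertices are already adjacent to~$x$.

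\emph{Phase 3 (closing).} In the last segment, Builder buys any offered edge $uv$ whose endpoints are the two ends of one of these paths; such an edge completes a copy of~$W_k$. If the number $P$ of endpoint pairs satisfies $P=\omega(n^2/t)$, then a.a.s.\ one of them is offered in the last segment. Since this phase buys at most one edge, its cost is negligible.

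The budget is balanced regime by regime. For $t\ge n^{3/2}$, take $d=\omega(n^2/t)$, which is allowed since $n^2/t\le t/n$ exactly when $t\ge n^{3/2}$. The simulated process on $U$ has $\Theta(td^2/n^2)=\omega(d)$ edges, so $G[U]$ is supercritical. Builder can then buy a tree of $O(d)$ edges in~$U$, for instance by repeatedly extending from few branching points, so that $\Omega(d^2)$ pairs, and hence $\omega(n^2/t)$ pairs, are joined by paths of length exactly $k-2$. The total budget is $O(d)=\omega(n^2/t)$.

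For $t\le n^{3/2}$ the hub degree is capped at $d\approx t/n$ and $U$ is sparse. In this regime I would instead use many hubs $x_1,\dots,x_r$ with disjoint neighbourhoods $U_i$ of size $d=\Theta(t/n)$. Inside each $U_i$, Builder greedily buys edges extending path-fans, in the FKM cycle style, to accumulate endpoint pairs. The parameters $r,d$ and the branching budgets are then optimised so that the total number of endpoint pairs is $\omega(n^2/t)$ while the cost is $O(n^{3k-4}/t^{2k-3})$. A heuristic count: a path of $k-2$ edges inside $U_i$ has each edge present with probability about $p=t/M$, and we need $\approx 1/p$ pairs in total. Balancing this with \cref{lem:max_no_copies} applied to the subgraph ``$W_k$ minus a rim edge'' indicates the exponent $3k-4$ over $2k-3$, so the optimisation should reproduce that bound. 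Independence between the phases is handled by \cref{lem:coupling}, which replaces each segment by $G(n,p_i)$ sandwiches.

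The main obstacle is Phase 2 in the sparse regime. There I must show a.a.s.\ that the greedy path-extension inside the simulated process on $U_i$ produces the predicted number of endpoint pairs of paths of length exactly $k-2$. This requires a concentration argument across the levels of the branching structure: the expected growth at each level, Chernoff bounds (\cref{lem:chernoff}) conditioned on the previous levels, and control that distinct paths yield distinct endpoint pairs. I would first attempt a level-by-level exposure with $k-2$ sub-segments, each level multiplying the number of path ends by a factor of $\Theta(d\,t/n^2)$ (the expected number of new neighbours inside $U_i$ of a fixed vertex) or by a capped budget, and then tune the caps so that the geometric bookkeeping gives the claimed $b$.
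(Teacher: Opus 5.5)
\textbf{The gap is in the sparse regime.} For $t=O(n^{3/2})$ you leave the argument open, and the plan you sketch cannot be completed as stated.

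First, pairwise disjoint hub neighbourhoods are impossible in part of this range. To get $\omega(1)$ expected rims you need about $r\approx n^{3k-3}/t^{2k-2}$ hubs of degree $\Theta(t/n)$. Their leaf sets then have total size $rt/n\approx n^{3k-4}/t^{2k-3}$, which exceeds $n$ whenever $t=o(n^{(3k-5)/(2k-3)})$. This is a non-empty window above the threshold $n^{(3k-4)/(2k-2)}$; for $K_4$ it is $n^{4/3}\ll t\ll n^{7/5}$. In fact, if the $U_i$ are pairwise disjoint with $|U_i|=O(t/n)$, the expected number of $(k-1)$-cycles of $G_t$ inside some $U_i$ is $O\bigl(n(t/n)^{k-2}(t/n^2)^{k-1}\bigr)=O(t^{2k-3}/n^{3k-5})$. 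This is $o(1)$ in that window, so no tuning of $r$, $d$ or branching caps rescues it.

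Second, level-by-level branching with per-level Chernoff bounds is the wrong mechanism here. The growth factor $dt/n^2=\Theta(t^2/n^3)$ is $O(1)$, so nothing branches. In the same window a single neighbourhood contains only $o(1)$ paths of length $k-2$ in expectation, so there is nothing to concentrate hub by hub.

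The paper's Case~1 works differently:
\begin{itemize}
\item it lets the neighbourhoods overlap;
\item it buys \emph{every} edge inside every hub neighbourhood, at cost about $|X|(t/n)^2(t/n^2)$, which is dominated by the star cost $|X|t/n$ exactly because $t=O(n^{3/2})$;
\item it takes $|X|=\lceil n^{3k-3}/r^{2k-2}\rceil$ with $r=o(t)$ close to $t$, so the expected number of rims is $\Theta((t/r)^{2k-2})=\omega(1)$;
\item it finishes with a second-moment argument over all hubs simultaneously.
\end{itemize}
The ingredient you are missing is the overlap control that makes the covariance estimate work. A.a.s.\ every $\ell$-set of vertices with $\ell\geq3$ lies in at most five hub neighbourhoods, and every pair lies in at most $4k$ of them. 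This is what replaces your disjointness assumption.

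\textbf{The dense regime is essentially the paper's Case~2, with two fixable issues.} For $t=\omega(n^{3/2})$ you use one hub, a star of size $d=\omega(n^2/t)$ with $d=o(b)$, and then a $C_{k-1}$ inside $N(x)$, as the paper does.

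Your claim that a tree with $O(d)$ edges in $U$ yields $\Omega(d^2)$ pairs joined by paths of length $k-2$ is too strong. Vertices of $U$ have only about $dt/n^2$ neighbours inside $U$, which can be barely $\omega(1)$. You only need $\omega(n^2/t)$ such pairs. The clean way to get them is to run the cycle strategy of \cref{lem:CycleStrat} on the simulated process on $U$, which has length $t'=\Theta(td^2/n^2)=\omega(d)$. You then check that $\max\{d^{\lfloor k/2\rfloor+1}/(t')^{\lfloor k/2\rfloor},d/(t')^{1/2}\}=O(n^2/t)=o(b)$; this is what the paper does.

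Also, $t=\Theta(n^{3/2})$ must be handled in the sparse case, since there the conditions $d=\omega(n^2/t)$ and $d\le t/n$ are incompatible.
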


Our proof of \cref{thm:WheelStrategy} is split into two cases, depending on whether $t=O(n^{3/2})$ or $t=\omega(n^{3/2})$.
The strategies for constructing wheels $W_k$ for $k\geq4$ are rather simple.
It is useful to consider the intuition that the random graph process can be split into a number of segments, each of them behaving roughly like a binomial random graph of the appropriate density, as formalised in \cref{lem:coupling}.

When $t=O(n^{3/2})$, the idea is to first construct a set of stars, each as large as possible, of an appropriate size.
The centres of these stars would play the role of the single vertex of higher degree in $W_k$.
Then, in the second round of exposure, one adds all edges contained in any of the sets of leaves of a star.
If this second round of exposure creates a cycle of length $k-1$ within the set of leaves of some star, this cycle together with the star it is contained in forms a copy of $W_k$.
We formalise all the details for this strategy below.

When $t=\omega(n^{3/2})$, the strategy has more levels of ``depth'' depending on the value of $k$.
Let us illustrate this for the particular case $k=4$.
We fix a single vertex $x$ and, in the first stage, construct a star with $x$ at its centre.
Let us denote the set of leaves of this star by $S$.
In the second round of exposure, we fix a subset $Y\subseteq S$ of an appropriate size, and purchase all edges contained in $S$ which are incident to some vertex in $Y$.
This leads to sets $S_1,\ldots,S_{|Y|}$, denoting the neighbours in~$S$ of vertices of~$Y$, respectively, at the end of this second round.
In the final round of exposure, it suffices to purchase a single edge contained in one of the sets $S_i$ to complete a $K_4$.
To simplify our strategy, we can summarise it as follows: first, we grow a star centred at $x$, and then we apply an optimal strategy for constructing a triangle within the neighbourhood of $x$.
For general $k\geq4$, the summary is the same: we first grow a star centred at $x$ and then apply an optimal strategy for constructing a cycle of length $k-1$ within the neighbourhood of $x$.
As such strategies for cycles have already been studied, our analysis will be greatly simplified by using the following result.

% \begin{lemma}[{\citet[Theorem~1.6]{FKM25}}]\label{lem:TriangleStrat}
%     For all $t=\omega(n)$ with $t\leq M$, if 
%     \[b=\omega\left(\max\left\{\frac{n^3}{t^2},\frac{n}{t^{1/2}}\right\}\right),\] 
%     there exists a successful $(t,b)$-strategy for constructing a copy of $K_3$.
% \end{lemma}

\begin{lemma}[{\citet[Theorem~1.6]{FKM25}}]\label{lem:CycleStrat}
    Let $k\geq4$ be a fixed integer.
    For all $t=\omega(n)$ with $t\leq M$, if 
    \[b=\omega\left(\max\left\{\frac{n^{\lfloor k/2\rfloor+1}}{t^{\lfloor k/2\rfloor}},\frac{n}{t^{1/2}}\right\}\right),\] there exists a successful $(t,b)$-strategy for constructing a copy of $C_{k-1}$.
\end{lemma}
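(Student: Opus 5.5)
The plan is to build many "almost cycles" (paths of length $\ell-1$, where $\ell\coloneqq k-1\geq3$) whose endpoints are known in advance, and then wait for the single closing edge to be offered. We split the process into a constant number of consecutive segments of length $t/k$ and couple each segment with an independent $G(n,p_i)$, $p_i=\Theta(p)$ where $p\coloneqq t/M$, via \cref{lem:coupling}. Throughout, $np=\Theta(t/n)=\omega(1)$ because $t=\omega(n)$. The final segment is used only to wait for an edge closing one of the prepared paths. This succeeds a.a.s.\ as soon as the number $P$ of prepared pairs of endpoints satisfies $Pp=\omega(1)$: the closing pairs are distinct, and in the last $G(n,p_k)$ each is present independently with probability $p_k$. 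We run two strategies, one for each term in the maximum, and use whichever applies.

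\emph{First term (many small trees).} Write $\ell=2h+1$ or $\ell=2h$. Pick $r$ fresh roots. In each of the next $h$ segments, every current leaf purchases up to $\Theta(np)$ offered edges to new vertices (in the odd case), so each root carries a tree whose set of level-$h$ leaves $L$ has size $|L|=\Theta((np)^h)$. Any two level-$h$ leaves lying in different branches of the same root are joined by a path of length $2h$, so closing such a pair gives a cycle of length $2h+1$. For $\ell=2h$ we instead pair leaves at depth $h$ in one branch with leaves at depth $h-1$ in another. The counts are then as follows. The number of prepared pairs is $P=\Theta(r|L|^2)$, or $\Theta(r\,(np)^h(np)^{h-1})$ in the even case. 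The budget is $\Theta(r|L|)$. Choosing $r$ minimal with $Pp=\omega(1)$ gives budget $\omega(1/((np)^hp))=\omega(n^{h+2}/t^{h+1})$ for odd $\ell$ and $\omega(1/((np)^{h-1}p))=\omega(n^{h+1}/t^{h})$ for even $\ell$. Both equal $\omega(n^{\lfloor k/2\rfloor+1}/t^{\lfloor k/2\rfloor})$ in terms of $k=\ell+1$. For this we need $r\geq1$, i.e.\ the first term must dominate; otherwise we use the second strategy.

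\emph{Second term (two stars joined by a path).} First buy, in a few segments, a path of length $\ell-3$ from a vertex $x$ to a vertex $y$ (with $x=y$ if $\ell=3$); this costs $O(1)$ edges and a.a.s.\ succeeds since $np=\omega(1)$. Then grow disjoint stars of size $s\coloneqq\omega(n/\sqrt t)$ at $x$ and at $y$, which is possible as long as $s=o(np)$. An edge between a leaf $a$ of the star at $x$ and a leaf $b$ of the star at $y$ closes the cycle $a\,x\cdots y\,b\,a$ of length $(\ell-3)+3=\ell$. We have $s^2$ candidate pairs and $s^2p=\omega(1)$, so a.a.s.\ one of them appears in the final segment, at total cost $O(s)$. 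If instead $np=O(n/\sqrt t)$, then $t=O(n^{4/3})$ and the first term already dominates, so the first strategy covers that range.

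\emph{Main obstacle.} The routine but delicate step is the first strategy: we must show that the trees really grow at rate $\Theta(np)$ per level with fresh vertices, and that the prepared pairs have not already been offered. The plan is as follows. By \cref{lem:coupling}, each round is an independent binomial graph, independent of the previous rounds. By \cref{lem:chernoff}, each leaf a.a.s.\ sees $\Theta(np)$ offered edges to vertices outside the (size $o(n)$) current structure. Truncating to exactly $\Theta(np)$ purchases keeps the budget deterministic. A first-moment estimate shows that only $o(P)$ prepared pairs were offered in earlier segments. Finally, a second-moment or Chernoff bound on the number of realised closing edges in the last round gives the a.a.s.\ conclusion.
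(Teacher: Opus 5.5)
The paper gives no proof of this lemma; it is quoted from \citet[Theorem~1.6]{FKM25}, so your argument has to stand on its own. Your gadgets are sound, and your exponent computations are correct wherever each gadget can actually be run. For $k=4$ (triangles) the two regimes meet exactly at $t=n^{4/3}$, so that case is covered. For $k\geq5$, however, your case split leaves a window of $t$ uncovered.

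Put $q\coloneqq\lfloor k/2\rfloor$. The two terms of the maximum cross at $t^*\coloneqq n^{2q/(2q-1)}$, which equals $n^{4/3}$ only when $q=2$. Two of your sentences fail.
\begin{itemize}
\item ``We need $r\geq1$, i.e.\ the first term must dominate'' is true for odd $\ell$ but not for even $\ell=2h$ (where $h=q$). There your choice $r\approx1/\bigl((np)^{2h-1}p\bigr)$ is at least $1$ only for $t=O(n^{(2h+1)/(2h)})$, strictly below $t^*$. Beyond that you are forced to take $r=1$ with full branching, which costs $\Theta((np)^h)$, polynomially more than the target.
\item ``If $np=O(n/\sqrt t)$ then the first term already dominates'' is true only for $\ell\leq4$. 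The two-star gadget needs $\omega(n/\sqrt t)$ leaves at a fixed vertex, which a.a.s.\ is offered only $\Theta(t/n)$ edges, so it requires $t=\omega(n^{4/3})$. For $\ell\geq5$, though, the second term already dominates from $t^*<n^{4/3}$ onwards.
\end{itemize}
Concretely, take $\ell=5$ and $t=n^{5/4}$. The target is $b=\omega(n^{3/8})$, your first strategy (with $r=1$) costs $\Theta(n^{1/2})$, and your second cannot be run since $np=\Theta(n^{1/4})$. Similarly, for $\ell=4$ and $t=n^{13/10}$ the target is $\omega(n^{2/5})$ and the first strategy costs $\Theta(n^{3/5})$. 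The stars would need $\omega(n^{7/20})$ leaves but only get $\Theta(n^{3/10})$.

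The missing ingredient is flexibility in the shape of the trees. You only vary the number of roots (keeping full $\Theta(np)$ branching at every level), or you use trees of depth one. Both gaps close once the last level is allowed to be thin.
\begin{itemize}
\item For even $\ell=2h$ with $t=\Omega(n^{(2h+1)/(2h)})$ and $t=O(t^*)$, use a single root with full branching on levels $1,\ldots,h-1$. Put only $L_h=\omega\bigl(1/(p(np)^{h-1})\bigr)\leq(np)^h$ vertices at depth $h$. This gives $\Theta\bigl(L_h(np)^{h-1}\bigr)$ prepared pairs, at cost $O\bigl(L_h+(np)^{h-1}\bigr)=O(L_h)$, which is $\omega(n^{h+1}/t^h)$ and can be made $o(b)$.
\item For $t=\omega(t^*)$, replace the two stars by two trees of depth $j\coloneqq\lceil\ell/2\rceil-1$. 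Give them nondecreasing level sizes and $L=\omega(n/\sqrt t)$ leaves each, and join their roots by a path of length $\ell-2j-1\in\{0,1\}$ (for odd $\ell$, take two branches of a single root). Since $(np)^j=\omega(n/\sqrt t)$ exactly when $t=\omega(t^*)$, this is feasible. It costs $O(L)$ and yields $\Theta(L^2)$ closing pairs.
\end{itemize}
With these shapes your closing-round argument goes through unchanged.

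One smaller point: when $np$ grows slowly, the per-leaf Chernoff bound $\nume^{-\Omega(np)}$ cannot be union-bounded over polynomially many leaves. It suffices instead, by Markov's inequality, that all but an $o(1)$-fraction of leaves and roots grow properly.
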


\begin{proof}[Proof of \cref{thm:WheelStrategy}]
    Fix an arbitrary $k\geq4$.
    Following the statement, we may assume throughout that $M\geq t=\omega(n^{(3k-4)/(2k-2)})=\omega(n^{3/2-1/(2k-2)})$.\COMMENT{The lower bound on $t$ follows by reordering the bounds in the statement. $\checkmark$}
    We split our proof into two cases, depending on the range of $t$ that we consider.

    \textbf{Case 1.}
    Assume first that $t=O(n^{3/2})$.
    Note that in this range we have $b=\omega(n^{3k-4}/t^{2k-3})$.\COMMENT{Indeed, we have that $\frac{n^{3k-4}/t^{2k-3}}{n^2/t}=n^{3(k-2)}/t^{2(k-2)}=\Omega(1)$ by the upper bound on $t$. $\checkmark$}
    Fix any such~$b$ and let $r=r(n)$ be such that $r=o(t)$ but it is sufficiently close to $t$ that
    \begin{equation}\label{equa:artbound1}
        \frac{n^{3k-4}t}{r^{2k-2}}=o(b).
    \end{equation}
    Then, consider the strategy outlined in \cref{strat:K4sparse} below.
    This is a $(t,b)$-strategy by construction, so it only remains to prove that it is successful for constructing a copy of~$W_k$.

    \begin{algorithm}
    \caption{A $(t,b)$-strategy for $W_k$ for $t=O(n^{3/2})$.}\label{strat:K4sparse}
    \begin{algorithmic}[1]\setcounter{ALG@line}{-1}
        \State{Set $X\coloneqq[\lceil n^{3k-3}/r^{2k-2}\rceil]$ and $V\coloneqq[n]\setminus X$.}\label{stratK4SparseStage0}
        \State{For time $t/2$ and while the built graph has at most $b/2$ edges, buy any presented edge with one endpoint in $X$ and the other in $V$.}\label{stratK4SparseStage1}
        \State{For time $t/2$ and while the built graph has at most $b$ edges, buy any presented edge which is contained in $N_{B_{t/2}}(x)$ for at least one $x\in X$.}\label{stratK4SparseStage2}
    \end{algorithmic}
    \end{algorithm}

    We note that, by the assumption that $b\leq t$, \eqref{equa:artbound1} implies that $r=\omega(n^{(3k-4)/(2k-2)})=\omega(n^{3/2-1/(2k-2)})$.
    This in turn implies that $|X|=\lceil n^{3k-3}/r^{2k-2}\rceil=o(n)$.
    
    Let $\hat{G}_1$ and $\hat{G}_2$ denote the (random) graphs containing all the edges offered during stages~\ref{stratK4SparseStage1} and~\ref{stratK4SparseStage2}, respectively.
    Consider the coupling of the random graph process given by \cref{lem:coupling} applied with $k=2$ and $t_1=t_2=t/2$.
    More precisely, there exist some $p_1=(1-o(1))t_1/M$, $p_2=(1-o(1))t_2/M$, $p_1'=(1+o(1))t_1/M$ and $p_2'=(1+o(1))t_2/M$, and a coupling of random graphs $(H_1,\hat{G}_1,H_1',H_2,\hat{G}_2,H_2')$, such that $H_i\sim G(n,p_i)$ and $H_i'\sim G(n,p_i')$ for each $i\in[2]$ (where $H_i'$ corresponds to the union $H_i\cup\mybar{H}_i$ in \cref{lem:coupling}), $(H_1,H_1')$ is independent of $(H_2,H_2')$, $H_2'$ is independent of~$\hat{G}_1$, and a.a.s.
    \begin{equation}\label{equa:K4sparse_coupl1}
        H_1\subseteq \hat{G}_1\subseteq H_1'
    \end{equation}
    and 
    \begin{equation}\label{equa:K4sparse_coupl2}
        H_2\setminus H_1'\subseteq \hat{G}_2\subseteq H_2'.
    \end{equation}
    Note, moreover, that if we are to restrict our random graphs to disjoint subsets of edges $E_1,E_2\subseteq\inbinom{[n]}{2}$, we obtain one further independence property: the appearance of each edge in $(H_2\setminus H_1')\cap E_2$ is independent of $(H_1\cap E_1,H_1'\cap E_1)$ (even if $E_2$ is allowed to depend on $(H_1\cap E_1,H_1'\cap E_1)$).
    
    Let $\hat{B}_1$ and $\hat{B}_2$ be the graphs containing all edges purchased during stages~\ref{stratK4SparseStage1} and~\ref{stratK4SparseStage2}, respectively.
    Let $E_1\coloneqq\left\{\{x,v\}:x\in X, v\in V\right\}\subseteq\inbinom{[n]}{2}$, and observe that $\hat{B}_1\subseteq\hat{G}_1\cap E_1$.
    At the end of Stage~\ref{stratK4SparseStage1}, let $E_2\coloneqq\bigcup_{x\in X}\inbinom{N_{\hat{B}_1}(x)}{2}\subseteq\inbinom{[n]}{2}$,\COMMENT{Note that, while $E_1$ is a deterministic set, $E_2$ is a random one. $\checkmark$} and note that $\hat{B}_2\subseteq\hat{G}_2\cap E_2$ and $E_1\cap E_2=\varnothing$ (so we can make use of the independence mentioned above).
    
    We are first going to show that, in fact, a.a.s.\ 
    \begin{equation}\label{equa:K4sparse_coupla}
        \hat{B}_1=\hat{G}_1\cap E_1.
    \end{equation}
    Indeed, this will hold if during Stage~\ref{stratK4SparseStage1} we never purchase $b/2$ edges.
    Note that, for any \mbox{$x\in X$}, since we have that $\mathbb{E}[e_{H_1'}(x,V)]=(1\pm o(1))t/n$, by \cref{lem:chernoff} and the lower bound on~$t$ it follows that $\mathbb{P}[e_{H_1'}(x,V)\geq 2t/n]=\nume^{-\Omega(t/n)}=\nume^{-\omega(n^{1/2-1/(2k-2)})}$.\COMMENT{Note that $\mathbb{E}[e_{H_1'}(x,V)]=(1-o(1))np_1'=(1\pm o(1))t/n$. $\checkmark$}
    Similarly, $\mathbb{P}[e_{H_1}(x,V)\leq t/2n]=\nume^{-\omega(n^{1/2-1/(2k-2)})}$, as $\mathbb{E}[e_{H_1}(x,V)]=(1-o(1))t/n$.\COMMENT{In this case, $\mathbb{E}[e_{H_1}(x,V)]=(1-o(1))np_1=(1-o(1))t/n$. $\checkmark$}
    By a union bound over all $x\in X$ and~\eqref{equa:K4sparse_coupl1}, we conclude that a.a.s.\ for all $x\in X$ we have that
    \begin{equation}\label{equa:K4sparse_e1H}
        \frac{t}{2n}\leq e_{H_1}(x,V)\leq e_{H_1'}(x,V)\leq \frac{2t}{n},
    \end{equation}
    and together with \eqref{equa:K4sparse_coupl1} it follows that a.a.s.\ for every $x\in X$ we have that
    \begin{equation}\label{equa:K4sparse_e1}
        e_{\hat{G}_1}(x,V)\leq\frac{2t}{n}.
    \end{equation}
    In particular, by the upper bound on $t$ and~\eqref{equa:artbound1}, a.a.s.\COMMENT{We are using the upper bound $|X|\leq2n^{3k-3}/r^{2k-2}$, which holds since $r=o(t)=o(n^{3/2})$ (this is why we specify ``the upper bound on $t$''). $\checkmark$}
    \[e_{\hat{G}_1}(X,V)\leq|X|\frac{2t}{n}\leq4\frac{n^{3k-4}t}{r^{2k-2}}=o(b),\] so \eqref{equa:K4sparse_coupla} holds.
    
    We next claim that a.a.s.\ 
    \begin{equation}\label{equa:K4sparse_couplb}
        \hat{B}_2=\hat{G}_2\cap E_2.
    \end{equation}
    This holds if during Stage~\ref{stratK4SparseStage2} we never purchase $b/2$ edges.
    Indeed, suppose that \eqref{equa:K4sparse_e1} holds (which occurs a.a.s.).
    Then, by the upper bound on~$t$, we have that $|E_2|\leq|X|\inbinom{2t/n}{2}\leq 4n^{3k-5}t^2/r^{2k-2}$.
    It follows that $\mathbb{E}[|E(H_2')\cap E_2|]\leq 6n^{3k-7}t^3/r^{2k-2}$ for sufficiently large $n$.
    Since~$H_2'$ is independent of~$\hat{G}_1$, by \cref{lem:chernoff} we have that\COMMENT{We have $\mathbb{E}[|E(H_2')\cap E_2|]\leq 4n^{3k-5}t^2p_2'/r^{2k-2}=(4+o(1))n^{3k-7}t^3/r^{2k-2}$, so $\mathbb{P}\left[|E(H_2')\cap E_2|\geq8n^{3k-7}t^3/r^{2k-2}\right]\leq\nume^{-\Omega(n^{3k-7}t^3/r^{2k-2})}$.
    (Formally, we can see that the variable we care about is stochastically dominated by a binomial random variable $\mathrm{Bin}(4n^{3k-5}t^2/r^{2k-2},p_2')$.) $\checkmark$}
    \[\mathbb{P}\left[|E(H_2')\cap E_2|\geq8\frac{n^{3k-7}t^3}{r^{2k-2}}\right]\leq\exp\left(-\Omega\left(\frac{n^{3k-7}t^3}{r^{2k-2}}\right)\right)=\nume^{-\Omega(n^{1/2})}=o(1),\]
    where the second comparison holds since $r^{2k-2}/t^3=o(t^{2k-5})=o(n^{3k-15/2})$ by the upper bounds on~$r$ and~$t$, respectively.
    Lastly, note that, by the upper bound on $t$, \eqref{equa:artbound1} implies that \[\frac{n^{3k-7}t^3}{r^{2k-2}}=\frac{t^2}{n^3}\frac{n^{3k-4}t}{r^{2k-2}}=o(b),\]
    and thus a.a.s.\ $|E(H_2')\cap E_2|=o(b)$, so \eqref{equa:K4sparse_couplb} follows by~\eqref{equa:K4sparse_coupl2}.\COMMENT{Formally, we are first proving that, in the space conditional on \eqref{equa:K4sparse_e1}, a.a.s.\ we buy few edges (here we are using the fact that $H_2'$ is independent of $(H_1,\hat{G}_1,H_1')$).
    Since \eqref{equa:K4sparse_e1} holds a.a.s., we can undo the conditioning and retain the conclusion.
    Then we take the intersection with the event that \eqref{equa:K4sparse_coupl2} holds, which also occurs a.a.s. $\checkmark$}

    Combining \eqref{equa:K4sparse_coupl1} with \eqref{equa:K4sparse_coupla} and \eqref{equa:K4sparse_coupl2} with \eqref{equa:K4sparse_couplb}, respectively, we deduce that a.a.s.\ $H_1\cap E_1\subseteq\hat{B}_1$ and $(H_2\setminus H_1')\cap E_2\subseteq\hat{B}_2$.
    In order to conclude that the strategy is successful, it thus suffices to verify that a.a.s.\ $(H_1\cap E_1)\cup((H_2\setminus H_1')\cap E_2)$ contains a copy of $W_k$.
    For this, we may first expose $F_1\coloneqq H_1\cap E_1$, and note that the bounds in \eqref{equa:K4sparse_e1H} hold a.a.s. 
    Next, let $E_2^*\coloneqq\bigcup_{x\in X}\inbinom{N_{F_1}(x)}{2}$, and note that a.a.s.\ $E_2^*\subseteq E_2$ by~\eqref{equa:K4sparse_coupl1}.
    We next expose $F_2\coloneqq(H_2\setminus H_1')\cap E_2^*$ (where each edge of $E_2^*$ is retained independently with some probability $p^*=(1-o(1))t/n^2$).\COMMENT{Each edge is retained with probability $p_2(1-p_1')$. $\checkmark$}
    Note that, if for some $x\in X$ the graph $F_2\cap\inbinom{N_{F_1}(x)}{2}$ contains a cycle of length $k-1$, then $F_1\cup F_2$ contains a copy of~$W_k$, so it suffices to prove that the former holds a.a.s.
    
    In order to prove this, we first claim that, for every fixed integer $\ell\geq3$, a.a.s.
    \begin{enumerate}[label=$(\mathrm{CN}\ell)$]
        \item\label{item:common_neighs_ell} every $\ell$-set of vertices in~$\inbinom{V}{\ell}$ is contained in the $F_1$-neighbourhood of at most five vertices $x\in X$.
    \end{enumerate}
    Indeed, recall that, by the assumption that $b\leq t$, \eqref{equa:artbound1} implies that $r=\omega(n^{(3k-4)/(2k-2)})$.
    Now, for a fixed $U\in\inbinom{V}{\ell}$, using the upper bound on~$t$ and this lower bound on~$r$, the probability that~$U$ is contained in the $F_1$-neighbourhood of at least six vertices $x\in X$ is at most\COMMENT{We argue more generally by analysing the probability that they are contained in more than $a_\ell$ such sets.
    The $\ell$ vertices must all be neighbours of at least $a_\ell+1$ vertices in $X$, thus the first term.
    In the second equality we are using the fact that $t=O(n^{3/2})$ and that $r^{2k-2}=\omega(n^{3k-4})$ by the bounds on $t$ and $r$.
    For the last inequality, we must choose some $a_\ell$ such that 
    \[\left(1-\frac{\ell}{2}\right)\left(a_\ell+1\right)\leq-\ell\iff a_\ell\left(1-\frac{\ell}{2}\right)<-1-\ell/2\iff a_\ell\geq\frac{\ell/2+1}{\ell/2-1}=\frac{\ell+2}{\ell-2}.\]
    For reference, we have that $a_3=5$, $a_4=a_5=3$, and $a_\ell=2$ for all $\ell\geq6$. $\checkmark$}
    \[\binom{|X|}{6}(p_1)^{6\ell}=\Theta\left(\left(\frac{n^{3k-3}}{r^{2k-2}}\frac{t^\ell}{n^{2\ell}}\right)^6\right)=o\left(\left(n^{1-\ell/2}\right)^6\right)=o(n^{-\ell}),\]
    and the conclusion follows by a union bound over all $\ell$-sets in $\inbinom{V}{\ell}$.
    
    In a similar fashion, we claim that a.a.s.
    \begin{enumerate}[label=$(\mathrm{CN}2)$]
        \item\label{item:common_neighs_2} every pair of vertices in~$\inbinom{V}{2}$ is contained in the $F_1$-neighbourhood of at most $4k$ vertices $x\in X$.\COMMENT{The value $4k$ is not optimal, and we made no effort to optimise it. $\checkmark$}
    \end{enumerate}
    To show this, fix an arbitrary pair of vertices $U\in\inbinom{V}{2}$.
    We now further split our range for $t$ into two.
    Assume first that $t=O(n^{3/2-1/(4k-4)})$.
    Combining this with \eqref{equa:artbound1} and the assumption that $b\leq t$, with calculations analogous to those for proving \ref{item:common_neighs_ell}, we conclude that the probability that $U$ is contained in the $F_1$-neighbourhood of at least $4k$ vertices is at most
    \[\binom{|X|}{4k}(p_1)^{8k}=\Theta\left(\left(\frac{n^{3k-3}}{r^{2k-2}}\frac{t^2}{n^4}\right)^{4k}\right)=o\left(\left(\frac{tb}{n^3}\right)^{4k}\right)=o\left(\left(\frac{t^2}{n^3}\right)^{4k}\right)=o\left(n^{-\frac{4k}{2k-2}}\right)=o(n^{-2}).\]
    On the other hand, if $t=\omega(n^{3/2-1/(4k-4)})$, by the monotonicity of the ``successfulness'' of $(t,b)$-strategies over~$b$, we may assume that $b\leq n^{3k-4}\cdot n^{k/(4k-4)}/t^{2k-3}$.\COMMENT{We may assume that $b\leq n^{3k-4}\cdot n^{\eps}/t^{2k-3}$ for an arbitrary $\eps>0$, and we make the choice for $\eps$ that we write. $\checkmark$}
    Then, analogously as above, the probability that~$U$ is contained in the $F_1$-neighbourhood of at least $4k$ vertices is at most
    \begin{align*}
        \binom{|X|}{4k}(p_1)^{8k}&=o\left(\left(\frac{tb}{n^3}\right)^{4k}\right)=o\left(\left(\frac{n^{3k-4}}{t^{2k-3}}n^{\frac{k}{4k-4}}\frac{t}{n^3}\right)^{4k}\right)=o\left(\left(\left(\frac{n^3}{t^2}\right)^{k-2}n^{\frac{k}{4k-4}-1}\right)^{4k}\right)\\
        &=o\left(n^{4k\left(\frac{k-2}{2k-2}+\frac{k}{4k-4}-1\right)}\right)=o(n^{-k})=o(n^{-2}).
    \end{align*}
    In both cases, the conclusion follows by a union bound over all possible pairs of vertices.
    
    Condition on the event that \ref{item:common_neighs_ell} holds for all $3\leq\ell\leq k-1$, that \ref{item:common_neighs_2} holds, and that the bounds in \eqref{equa:K4sparse_e1H} hold.
    We now claim that the desired conclusion follows from the second moment method.
    Indeed, let $\Delta$ denote the number of $(k-1)$-sets of vertices $\{v_1,\ldots,v_{k-1}\}\in\inbinom{V}{k-1}$ contained in $N_{F_1}(x)$ for any $x\in X$, and let $Z$ denote the number of cycles of length $k-1$ in $F_2$ whose vertex set is contained in $N_{F_1}(x)$ for some $x\in X$.
    For sufficiently large $n$, it follows from \ref{item:common_neighs_ell} and \eqref{equa:K4sparse_e1H} that\COMMENT{For the constants, in the lower bound we divide by $2$ once more to account for error terms when multiplying the number in the binomial coefficient.
    For the upper bound we add an additional factor of $2$ to account for the rounding in the size of $X$; this leads to multiplying by a factor of $2^k/(k-1)!$, which is in general smaller than $1$ for larger values of $k$, and can be bounded by $4$ for all values of $k$ that we consider. $\checkmark$}
    \begin{equation}\label{equa:Deltabound}
        \frac{1}{5\cdot2^k(k-1)!}\left(\frac{n^2t}{r^2}\right)^{k-1}\leq\frac{|X|}{5}\binom{t/2n}{k-1}\leq\Delta\leq|X|\binom{2t/n}{k-1}\leq4\left(\frac{n^2t}{r^2}\right)^{k-1},
    \end{equation}
    and so $\mathbb{E}[Z]=\Delta(k-2)! (p^*)^{k-1}/2=\Theta((t/r)^{2k-2})=\omega(1)$.
    By Chebyshev's inequality, it now suffices to verify that $\mathrm{Var}(Z)=o((t/r)^{4k-4})$.
    This is by now a standard argument, but we include the details for the interested reader.
    We may express $Z$ as a sum of indicator random variables $Z=\sum_CZ_C$, where the sum is over all copies~$C$ of~$C_{k-1}$ in~$K_n$ which are fully contained in $\inbinom{N_{F_1}(x)}{2}$ for some $x\in X$, where~$Z_C$ is the indicator variable that $C\subseteq F_2$.
    We then have that
    \[\operatorname{Var}(Z)=\sum_{C}\operatorname{Var}(Z_C)+\sum_{C,C'}\operatorname{Cov}(Z_C,Z_{C'})\leq\mathbb{E}[Z]+\sum_{C,C'}\operatorname{Cov}(Z_C,Z_{C'}),\]
    where the sum is now over all ordered pairs $C,C'$ of copies of $C_{k-1}$ as above.
    
    It now remains to bound these covariances.
    The sum can be separated depending on the number of edges shared by $C$ and~$C'$.
    If $C$ and $C'$ share exactly $a\in[k-3]$ edges,\COMMENT{Note that it cannot be that $C$ and $C'$ share exactly $k-2$ edges.} we have that 
    \[\operatorname{Cov}(Z_C,Z_{C'})=(p^*)^{2k-2-a}-(p^*)^{2k-2}\leq(p^*)^{2k-2-a}.\]
    Moreover, the number of pairs of cycles $C,C'$ sharing exactly $a\in[k-3]$ edges is
    \[O\left(\left(\frac{n^2t}{r^2}\right)^{k-1}\left(\frac{t}{n}\right)^{k-a-2}\right);\]
    this can be estimated by first choosing $C$ (for which we first choose some $x\in X$, then some $(k-1)$-set in~$N_{F_1}(x)$, and then a cyclic ordering of the vertices in this set, leading to a similar expression as in the upper bound in \eqref{equa:Deltabound}), then choosing the (at most) $k-a-2$\COMMENT{Note that any two cycles sharing $a<|E(C)|$ edges must share at least $a+1$ vertices.} remaining vertices for $C'$ (which by \ref{item:common_neighs_2} and \ref{item:common_neighs_ell} must be contained in the $F_1$-neighbourhood of one of constantly many vertices $x'\in X$, and for each of which we have $O(t/n)$ choices by \eqref{equa:K4sparse_e1H}), and then fixing their cyclic order (in at most constantly many ways).
    Combining these observations and using the upper bound on $t$, we conclude that\COMMENT{For the first equality, we simply note that $\mathbb{E}[Z]=\Theta((t/r)^{2k-2})$ and so take this term out as a common factor (which removes all the factors of $r$ from the sum); the expression then follows by substituting $p^*=\Theta(t/n^2)$.
    The last inequality holds by the upper bound on $t$.}
    \begin{align*}
        \mathrm{Var}(Z)&\leq\mathbb{E}[Z]+\sum_{a=1}^{k-3}O\left(\left(\frac{n^2t}{r^2}\right)^{k-1}\left(\frac{t}{n}\right)^{k-a-2}(p^*)^{2k-2-a}\right)\\
        &=\mathbb{E}[Z]\left(1+\sum_{a=1}^{k-3}O\left(\frac{t^{2k-2a-3}}{n^{3k-3a-4}}\right)\right)=(1+o(1))\mathbb{E}[Z].
    \end{align*}
    
    \textbf{Case 2.}
    Assume now that $t=\omega(n^{3/2})$.
    Note that in this range we have $b=\omega(n^2/t)$.\COMMENT{Indeed, we have that $\frac{n^{3k-4}/t^{2k-3}}{n^2/t}=n^{3(k-2)}/t^{2(k-2)}=o(1)$ by the lower bound on $t$. $\checkmark$}
    Fix any such~$b$ and let $\tilde{n}=\tilde{n}(n)$ and $t_1=t_1(n)$ be such that $\tilde{n}=o(b)$, $\tilde{n}=o(t/n)$, $\tilde{n}=\omega(n^2/t)$, $\tilde{n}n\leq t_1=o(t)$, and $t_1=\omega(n^2/\tilde{n}^2)$.
    It is straightforward to verify that such functions exist in this range.\COMMENT{Indeed, note that $n^2/t=o(t/n)\iff t=\omega(n^{3/2})$ (which holds), so we can choose such a $\tilde{n}$.
    For $t_1$, we have that $\tilde{n}n=o(t)$, so it suffices to verify that $t=\omega(n^2/\tilde{n}^2)\iff\tilde{n}=\omega(n/\sqrt{t})$, which holds.
    The reason why we make this choice is so that the set where we apply stage~\ref{stratK4DenseStage2}, below, grows in size with $n$, so we can apply the asymptotic results for cycles. $\checkmark$} 
    
    Now consider the $(t,b)$-strategy outlined in \cref{strat:K4dense} below.
    For sufficiently large~$n$, this is a $(t,b)$-strategy by construction, so it only remains to prove that it is successful.

    \begin{algorithm}
    \caption{A $(t,b)$-strategy for $W_k$ for $t=\omega(n^{3/2})$.}\label{strat:K4dense}
    \begin{algorithmic}[1]
        \State{Fix a vertex $x\in[n]$.
        For time $t_1$ and while the built graph has at most $\tilde{n}$ edges, buy any presented edge with one endpoint being $x$.}\label{stratK4DenseStage1}
        \State{For time $t/2$, simulate an optimal $(t\tilde{n}^2/4n^2,b/2)$-strategy for constructing a cycle of length $k-1$ on $N_{B_{t_1}}(x)$.}\label{stratK4DenseStage2}
    \end{algorithmic}
    \end{algorithm}

    Reveal first the set of edges incident to $x$ which are offered during the first $t_1$ steps, and their order, but nothing else of the random graph process.
    This determines $N_{B_{t_1}}(x)$.
    Note that, by our choice of~$\tilde{n}$, a straightforward application of \cref{lem:chernoff} shows that a.a.s.\ $|N_{B_{t_1}}(x)|=\tilde{n}$;\COMMENT{We need to show that a.a.s.\ $|N_{G_{t_1}}(x)|\geq \tilde{n}$.
    Indeed, $|N_{G_{t_1}}(x)|$ follows a hypergeometric distribution with parameters $M$, $n-1$ and $t_1$, so it has expectation $2t_1/n\geq 2\tilde{n}$.
    Since $\tilde{n}=\omega(1)$, the result follows by \cref{lem:chernoff} for all sufficiently large~$n$. $\checkmark$} we will assume that this is the case from now on.

    Now let $t'\coloneqq t\tilde{n}^2/4n^2$ and consider the application of the optimal $(t',b/2)$-strategy for constructing a cycle of length $k-1$ on~$N_{B_{t_1}}(x)$ in Stage~\ref{stratK4DenseStage2} (note that $|N_{B_{t_1}}(x)|=\tilde{n}=\omega(1)$).
    Since we have not revealed the edges of the first stage of the random graph process except those incident to $x$, we have no information about those offered inside $N_{B_{t_1}}(x)$.
    As such, the $(N_{B_{t_1}}(x),t_1,t/2)$-RGP has the same distribution as the segment of length $\ell(N_{B_{t_1}}(x),t_1,t/2)$ of the random graph process on vertex set~$N_{B_{t_1}}(x)$ started at time~$0$.\COMMENT{We might like to claim that the $(N_{B_{t_1}}(x),t_1,t/2)$-RGP has the same distribution as the $(N_{B_{t_1}}(x),0,t/2)$-RGP; however, this is not so clear, as the length of these processes is a random variable and the information we have revealed for analysing Stage~\ref{stratK4DenseStage1} influences the length of the simulated processes. $\checkmark$}
    Moreover, from \cref{lem:simulated_length} we know that a.a.s.~$\ell(N_{B_{t_1}}(x),t_1,t/2)\geq t'$.\COMMENT{One needs to verify that the required conditions for applying this lemma hold. Indeed, we have that $n\tilde{n}\leq t_1=o(n^2)$  since $\tilde{n}=o(t/n)=o(n)$, and $t\tilde{n}^2/2\geq t_1\tilde{n}^2=\omega(n^2)$, where the last equality is true by our choice of $t_1$ and $\tilde{n}$. $\checkmark$}
    Therefore, since $b=\omega(\max\{\tilde{n}^{\lfloor k/2\rfloor+1}/(t')^{\lfloor k/2\rfloor},\tilde{n}/(t')^{1/2}\})$,\COMMENT{Indeed, note first that 
    \[\frac{\tilde{n}^{\lfloor k/2\rfloor+1}}{(t')^{\lfloor k/2\rfloor}}=\Theta\left(\frac{n^{2\lfloor k/2\rfloor}}{t^{\lfloor k/2\rfloor}\tilde{n}^{\lfloor k/2\rfloor-1}}\right)\]
    and that 
    \[\frac{n^2}{t}=\omega\left(\frac{n^{2\lfloor k/2\rfloor}}{t^{\lfloor k/2\rfloor}\tilde{n}^{\lfloor k/2\rfloor-1}}\right)\iff 1=\omega\left(\left(\frac{n^2}{t\tilde{n}}\right)^{\lfloor k/2\rfloor-1}\right)\iff\tilde{n}=\omega(n^2/t),\]
    which holds by the choice of $\tilde{n}$, so the first bound follows since $b=\omega(n^2/t)$.
    For the second bound, simply note that $\tilde{n}/(t')^{1/2}=\Theta(n/t^{1/2})$, which does not tend to $0$ by the upper bound on $t$, and thus $b=\omega(n^2/t)=\omega(n/t^{1/2})=\omega(\tilde{n}/(t')^{1/2})$. $\checkmark$} \cref{lem:CycleStrat} ensures that a.a.s.~Stage~\ref{stratK4DenseStage2} in \cref{strat:K4dense} constructs a cycle of length $k-1$ inside the Stage-\ref{stratK4DenseStage1}-neighbourhood of~$x$, which results in a copy of~$W_k$, as desired.
\end{proof}

In order to prove the upper bound on the optimal budget for $(t,b)$-strategies for constructing a copy of $K_5$ from \cref{thm:K5_main}, we also exhibit an explicit strategy.
This bound is reformulated next, and the proof closely follows that for wheels.

\begin{theorem}\label{thm:K5strategy}
    For all $t=\omega(n^{3/2})$ with $t\leq M$, if 
    \[b=\omega\left(\max\left\{\frac{n^{12}}{t^7},\left(\frac{n^2}{t}\right)^{5/3}\right\}\right),\] 
    there exists a~successful $(t,b)$-strategy for constructing a copy of $K_5$.
\end{theorem}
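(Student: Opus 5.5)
The plan is to follow Case~2 of the proof of \cref{thm:WheelStrategy}, viewing $K_5$ as a vertex $x$ joined to a copy of $K_4=W_4$. Builder first grows a star at a fixed vertex $x$ with $\tilde n$ leaves. She then simulates, on $N_{B_{t_1}}(x)$, the successful strategy for $K_4$ given by \cref{thm:WheelStrategy} with $k=4$. Any $K_4$ built inside the neighbourhood of $x$, together with $x$, forms a copy of $K_5$.

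\textbf{Step 1 (choice of parameters).} Fix $b$ as in the statement. Choose $\tilde n=\tilde n(n)$ and $t_1=t_1(n)$ with $\tilde n=o(b)$, $\tilde n=o(t/n)$, $\tilde n=\omega(1)$, $\tilde n n\le t_1=o(t)$ and $t_1\tilde n^2=\omega(n^2)$. Set $t'\coloneqq t\tilde n^2/4n^2$. The simulated $K_4$ strategy runs on $\tilde n$ vertices for time $t'$, so by \cref{thm:WheelStrategy} it needs budget
\[
b'=\omega\left(\max\left\{\frac{\tilde n^{8}}{(t')^{5}},\frac{\tilde n^{2}}{t'}\right\}\right)=\omega\left(\max\left\{\frac{n^{10}}{t^{5}\tilde n^{2}},\frac{n^2}{t}\right\}\right).
\]
The star costs $\tilde n$ edges, so we want to balance $\tilde n$ against $n^{10}/(t^5\tilde n^2)$, subject to $\tilde n\lesssim t/n$.
\begin{itemize}
\item If $t=O(n^{13/8})$, take $\tilde n$ just below $t/n$. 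The $K_4$ cost is then essentially $n^{12}/t^{7}$, which dominates $t/n$ in this range.
\item If $t=\omega(n^{13/8})$, take $\tilde n$ just above $n^{10/3}/t^{5/3}=(n^2/t)^{5/3}$. This value is $o(t/n)$ exactly when $t=\omega(n^{13/8})$, and it makes both costs of order $(n^2/t)^{5/3}$.
\end{itemize}
The term $n^2/t$ is dominated by $(n^2/t)^{5/3}$ whenever $t=O(n^2)$. Since $b$ is $\omega(\cdot)$ of the maximum, there is room to choose $\tilde n=o(b)$ while keeping $b/2=\omega(b')$. We also need $t'\ge b/2$, which can be arranged by monotonicity in $b$, and $t_1\tilde n^2=\omega(n^2)$, which holds since $\tilde n=\omega(n/\sqrt t)$ in both ranges because $t=\omega(n^{3/2})$. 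Checking that all these constraints can be met simultaneously, including near the breakpoint $t=n^{13/8}$ and for $t$ close to $M$ where $\tilde n$ must still grow, is a routine but fiddly calculation. I expect it to be the main obstacle.

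\textbf{Step 2 (strategy and analysis).} In the first $t_1$ steps Builder buys edges at $x$ until she has $\tilde n$ of them. By \cref{lem:chernoff} she a.a.s.\ succeeds, since $\mathbb{E}[\deg_{G_{t_1}}(x)]\approx 2t_1/n\ge 2\tilde n$. For the next $t/2$ steps she simulates an optimal $(t',b/2)$-strategy for $K_4$ on $U\coloneqq N_{B_{t_1}}(x)$.

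Only the edges at $x$ have been revealed, so the $(U,t_1,t/2)$-RGP is distributed as a random graph process on $U$ of length $\ell(U,t_1,t/2)$. By \cref{lem:simulated_length}, applied with $t_2=t/2$, a.a.s.\ $\ell(U,t_1,t/2)\ge t'$. Then \cref{thm:WheelStrategy} (with $k=4$, on $\tilde n\to\infty$ vertices) a.a.s.\ yields a $K_4$ inside $U$, and hence a copy of $K_5$. The total budget used is at most $\tilde n+b/2\le b$.
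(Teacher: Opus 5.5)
There is a genuine gap in the lower part of the range, and it is more than fiddly bookkeeping: a strategy built around one fixed centre $x$ cannot succeed when $t=O(n^{8/5})$. A.a.s.\ $x$ has at most about $2t/n$ neighbours in $G_t$, so $\tilde n=O(t/n)$, and the simulated process on $U$ has length $t'=\Theta(t\tilde n^2/n^2)=O(t^3/n^4)$. The expected number of copies of $K_4$ that the process ever offers inside $U$ is therefore $\Theta(\tilde n^4(t/n^2)^6)=O(t^{10}/n^{16})$; this is just the expected number of copies of $K_5$ through $x$ in $G_t$. It is $o(1)$ for $t=o(n^{8/5})$ and only $O(1)$ at $t=\Theta(n^{8/5})$.

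In your parameter check the failure sits exactly at the condition $t'\ge b/2$, which you dismissed by monotonicity. With $\tilde n\approx t/n$, the inner budget must be $\omega(n^{12}/t^7)$ while $t'\approx t^3/n^4$. But $n^{12}/t^7=o(t^3/n^4)$ holds if and only if $t=\omega(n^{8/5})$. Monotonicity only lets you lower $b$ to just above the inner threshold, so it cannot rescue this. Equivalently, $t'=o(\tilde n^{4/3})$ lies in the range where \cref{cor:Wheel_lower_bound} rules out every $K_4$ strategy on $\tilde n$ vertices. Your argument is fine for $t=\omega(n^{8/5})$, and your second bullet is exactly the paper's Case~2. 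However, the interval $n^{3/2}\ll t=O(n^{8/5})$, where the target budget is $n^{12}/t^7$, is not covered. The real obstacle is there, not near $t=n^{13/8}$ or $t$ close to $M$.

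The missing idea is to use many centres simultaneously, as in Case~1 of the proof of \cref{thm:WheelStrategy}. For $t=O(n^{13/8})$ the paper proceeds as follows:
\begin{enumerate}
\item Fix $X$ of size $\lceil n^{16}/r^{10}\rceil$, with $r=o(t)$ chosen close to $t$.
\item For time $t/2$, buy every offered edge between $X$ and $V=[n]\setminus X$. Each $x\in X$ gets $\Theta(t/n)$ leaves, at total cost $O(n^{15}t/r^{10})=o(b)$.
\item For time $t/2$, buy every offered edge lying inside some $N(x)$, at cost $O(n^{12}t^3/r^{10})=o(b)$.
\end{enumerate}
Via \cref{lem:coupling}, the expected number of $K_4$'s inside some neighbourhood is $\Theta(|X|(t/n)^4(t/n^2)^6)=\Theta((t/r)^{10})=\omega(1)$. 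A second-moment argument then shows one appears a.a.s.; overlaps are controlled by the fact that every $3$- or $4$-subset of $V$ lies in at most five of the neighbourhoods. The factor $|X|\approx n^{16}/t^{10}$ is large precisely when $t\ll n^{8/5}$, and it is what compensates for the $t^{10}/n^{16}$ deficiency of a single star.
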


\begin{proof}
    The proof goes along the same lines as the proof of \cref{thm:WheelStrategy}.
    Due to the analogies between the proofs, we omit some details here.
    We begin by splitting the range of $t$ into two cases, noting that
    \[ \frac{n^{12}}{t^7} = \left(\frac{n^2}t\right)^{5/3} \iff t=n^{13/8} .\]

    \textbf{Case 1.}
    Assume first that $t=O(n^{13/8})$.
    Note that in this range we have $b=\omega(n^{12}/t^7)$.
    Fix any such~$b$ and let $r=r(n)$ be such that $r=o(t)$ but it is sufficiently close to $t$ that
    \[ r=\omega(n^{3/2}), \qquad \frac{n^9t^4}{r^{10}}=o(1), \qquad \text{and} \qquad \frac{n^{12}t^3}{r^{10}}=o(b).\] 
    Then, consider the strategy outlined in \cref{strat:K5sparse} below.
    This is a $(t,b)$-strategy by construction, so it only remains to prove that it is successful for constructing a copy of~$K_5$.

    \begin{algorithm}
    \caption{A $(t,b)$-strategy for $K_5$ for $t=O(n^{13/8})$.}\label{strat:K5sparse}
    \begin{algorithmic}[1]\setcounter{ALG@line}{-1}
        \State{Set $X\coloneqq[\lceil n^{16}/r^{10}\rceil]$ and $V\coloneqq[n]\setminus X$.}\label{stratK5SparseStage0}
        \State{For time $t/2$ and while the built graph has at most $b/2$ edges, buy any presented edge with one endpoint in $X$ and the other in $V$.}\label{stratK5SparseStage1}
        \State{For time $t/2$ and while the built graph has at most $b$ edges, buy any presented edge which is contained in $N_{B_{t/2}}(x)$ for at least one $x\in X$.}\label{stratK5SparseStage2}
    \end{algorithmic}
    \end{algorithm}

First, note that, by the choice of $r$, we have $|X| = \lceil n^{16}/r^{10}\rceil = \lceil n(n^{3/2}/r)^{10}\rceil = o(n)$.
We then define the coupling of random graphs $(H_1,\hat{G}_1,H_1',H_2,\hat{G}_2,H_2')$, the graphs $\hat{B}_1, \hat{B}_2$, and the edge sets $E_1, E_2$ analogously as in the proof of \cref{thm:WheelStrategy}.

Following the proof of \cref{thm:WheelStrategy}, since $t=\omega(n^{3/2})$ and by our choice of~$r$, we note that a.a.s.\ the number of edges purchased during Stage~\ref{stratK5SparseStage1} satisfies
\begin{equation*}\label{eq:K5_stage1_budget_limit}
    e(\hat{B}_1)\leq e_{\hat{G}_1}(X,V)\leq|X|\frac{2t}{n} \leq 4\frac{n^{15}t}{r^{10}} \leq 4\frac{n^{12}t^3}{r^{10}} = o(b),
\end{equation*}
and thus a.a.s.\ $\hat{B}_1=\hat{G}_1\cap E_1$.
Similarly, a.a.s.\ the number of edges purchased during Stage~\ref{stratK5SparseStage2} satisfies
\begin{equation*}\label{eq:K5_stage2_budget_limit}
    e(\hat{B}_2)\leq |E(H_2')\cap E_2|\leq|X|\binom{2t/n}{2}\frac{2t}{n^2} \leq 8\frac{n^{12}t^3}{r^{10}} = o(b),
\end{equation*}
and thus a.a.s.\ $\hat{B}_2=\hat{G}_2\cap E_2$.
Therefore, after defining $F_1$ and $F_2$ analogously as in the proof of \cref{thm:WheelStrategy}, in order to verify that the strategy is successful, %(in other words, that during stage~\ref{stratK5SparseStage2} Builder will buy at least one copy of $K_4$ lying inside $N_{\hat{B}_1}(x)$ for some $x\in X$), 
it suffices to prove that a.a.s.\ there is some $x\in X$ such that $F_2\cap\inbinom{N_{F_1}(x)}{2}$ contains a copy of $K_4$ (recall that, after revealing $F_1$, each edge in $\inbinom{N_{F_1}(x)}{2}$ appears in $F_2$ independently with some probability $p^*=(1-o(1))t/n^2$).

To prove this, we first claim that for every $\ell\geq3$ we have that a.a.s.\ 
\begin{enumerate}[label=$(\mathrm{CN}\ell)$]
    \item\label{item:common_neighs_ellK5} every $\ell$-set of vertices of $V$ is contained in the $F_1$-neighbourhood of at most five vertices $x\in X$.
\end{enumerate}    
Indeed, for a fixed $U\in\inbinom{V}{\ell}$, by our choice of $r$ and the bounds on $t$, the probability that there are at least six vertices $x\in X$ such that $U\subseteq N_{F_1}(x)$ is at most\COMMENT{To see the last inequality, observe first that in the case $\ell=3$ we have $n/t=o(n^{-1/2})$ by the lower bound on $t$, and thus $(n/t)^6=o(n^{-3})$.
For each subsequent value of $\ell$, we are multiplying by a factor of $(t/n^2)^6=O(n^{-6\cdot3/8})=o(n^{-1})$, so the inequality holds inductively. 
$\checkmark$}
\[ \Theta\left(\binom{|X|}{6} \left(\frac{t}{n^2}\right)^{6\ell}\right) = \Theta\left(\left(\frac{n^{16}}{r^{10}}\frac{t^\ell}{n^{2\ell}}\right)^6 \right) = \Theta\left(\left(t^{\ell-4}n^{7-2\ell}\frac{n^9t^4}{r^{10}} \right)^6 \right) = o\left(\left(t^{\ell-4}n^{7-2\ell}\right)^6\right) = o(n^{-\ell}).\]
Hence, by a union bound, a.a.s.\ for every $\ell$-set $U\in\inbinom{V}{\ell}$, there are at most five vertices $x\in X$ such that $U\subseteq N_{F_1}(x)$. 

Now condition on the event that \ref{item:common_neighs_ellK5} holds for $\ell\in\{3,4\}$ and that the bounds in \eqref{equa:K4sparse_e1H} hold (both of which hold a.a.s.).
Let $Z$ denote the number of $4$-sets in $\inbinom{V}{4}$ that induce a $K_4$ in $F_2$ and are contained in $N_{F_1}(x)$ for some $x\in X$.
Then, again by our choice of $r$,
\[ \mathbb{E}[Z] = \Theta\left(|X|\left(\frac{t}{n}\right)^4\left(\frac{t}{n^2}\right)^6 \right) = \Theta\left(\left(\frac{t}{r}\right)^{10}\right) = \omega(1).\]
It now remains to bound the variance, similarly as in the proof of \cref{thm:WheelStrategy}.
We have to consider the cases where two different potential copies of $K_4$ share two or three vertices (for which the covariances of the corresponding indicator random variables are bounded by~$(p^*)^{11}$ and~$(p^*)^9$, respectively).
For bounding the number of pairs of indicator variables which we consider, we first make a choice of four vertices within the $F_1$-neighbourhood of some $x\in X$, for which there are $O(|X|(t/n)^4)$ choices by the analogue of \eqref{equa:K4sparse_e1H}.
Next we bound the number of copies of $K_4$ which intersect the first copy we fixed.
In the second case (that is, the case that they intersect in three vertices), once we choose which three vertices are shared, by~\ref{item:common_neighs_ellK5} there are at most five choices for a vertex $x'\in X$ such that the second copy of~$K_4$ is contained in the neighbourhood of~$x'$; once such an~$x'$ is fixed, there are at most $2t/n$ choices for the fourth vertex, again by the corresponding analogue of~\eqref{equa:K4sparse_e1H}.
In the first case, however, once we fix the two shared vertices, we do not have any immediate restrictions on the other two.
Thus, we first fix a (potentially arbitrary) third vertex, for which there are at most~$n$ choices.
After this has been fixed, by~\ref{item:common_neighs_ellK5} there are at most five choices for an $x''\in X$ such that these three vertices (the two shared and the third one we chose) are contained in the neighbourhood of~$x''$.
Then we can choose the fourth vertex within this neighbourhood.
Combining these observations and using also the upper bound on~$t$, we conclude that
\begin{align*}
    \mathrm{Var}(Z) & \leq \mathbb{E}[Z] + O\left(|X|\left(\frac{t}{n}\right)^4\left(n\frac{t}{n}\left(\frac{t}{n^2}\right)^{11} + \frac{t}{n}\left(\frac{t}{n^2}\right)^{9}\right) \right)\\
    & = \mathbb{E}[Z] + O\left(|X|\left(\frac{t}{n}\right)^4\left(\frac{t}{n^2}\right)^{6}\left(\frac{t^6}{n^{10}} + \frac{t^4}{n^7}\right) \right) = (1+o(1))\mathbb{E}[Z].
\end{align*}
Hence, by Chebyshev's inequality, a.a.s.\ during Stage~\ref{stratK5SparseStage2} Builder will claim at least one copy of $K_4$ which completes a copy of $K_5$.

\textbf{Case 2.}
Assume now that $t=\omega(n^{13/8})$.
Note that in this range we have $b = \omega((n^2/t)^{5/3})$.
Fix any such~$b$ and let $\tilde{n}=\tilde{n}(n)$ and $t_1=t_1(n)$ be such that $\tilde{n}=o(b)$, $\tilde{n}=o(t/n)$, $\tilde{n}=\omega((n^2/t)^{5/3})$, $\tilde{n}n\leq t_1=o(t)$, and $t_1=\omega(n^2/\tilde{n}^2)$.\COMMENT{Again, one needs to verify that such a choice is possible. Note that $(n^2/t)^{5/3}=o(t/n)\iff t=\omega(n^{13/8})$, which is our regime, so we can choose such $\tilde{n}$.
For $t_1$, we have that $\tilde{n}n=o(t)$, so it suffices to verify that $t=\omega(n^2/\tilde{n}^2)\iff\tilde{n}=\omega((n^2/t)^{1/2})$, which holds as $n^2>t$ and $\tilde{n}=\omega((n^2/t)^{5/3})$. $\checkmark$}
Now consider the $(t,b)$-strategy outlined in \cref{strat:K5dense} below.
For sufficiently large~$n$, this is a $(t,b)$-strategy by construction, so it only remains to prove that it is successful.

    \begin{algorithm}
    \caption{A $(t,b)$-strategy for $K_5$ for $t=\omega(n^{13/8})$.}\label{strat:K5dense}
    \begin{algorithmic}[1]
        \State{Fix a vertex $x\in[n]$.
        For time $t_1$ and while the built graph has at most $\tilde{n}$ edges, buy any presented edge with one endpoint being $x$.}\label{stratK5DenseStage1}
        \State{For time $t/2$, simulate an optimal $(t\tilde{n}^2/4n^2,b/2)$-strategy for constructing a copy of $K_4$ on $N_{B_{t_1}}(x)$.}\label{stratK5DenseStage2}
    \end{algorithmic}
    \end{algorithm} 
    
We can use the same reasoning as in the proof of Case~2 of \cref{thm:WheelStrategy} with $t'\coloneqq t\tilde{n}^2/4n^2$.
Note that by our choice of~$\tilde{n}$ we have that $b = \omega(\max\{\tilde{n}^8/(t')^5, \tilde{n}^2/t'\})$\COMMENT{Indeed, note that $\tilde{n}^8/(t')^5 = 4^5n^{10}/t^5\tilde{n}^2 = o((n^2/t)^{5/3}) \iff \tilde{n} = \omega((n^2/t)^{5/3})$, which holds by our assumption on~$\tilde{n}$. Thus $\tilde{n}^8/(t')^5 = o(b)$.
For the second condition, we have $\tilde{n}^2/t' = 4n^2/t = O((n^2/t)^{5/3}) =o(b)$. $\checkmark$}.
Thus, by \cref{thm:WheelStrategy} with $k=4$, we conclude that a.a.s.\ Stage~\ref{stratK5DenseStage2} in Strategy~\ref{strat:K5dense} constructs a copy of $K_4$ inside $N_{B_{t_1}}(x)$, which results in a copy of $K_5$.
\end{proof}

\section{Concluding remarks and open problems}\label{sect:conclusions}

\subsection{Lower bounds}

\Cref{lem:max_no_copies} offers a general approach to derive lower bounds for the ``budget threshold'' for constructing copies of any given fixed graph.
With this general lemma we are able to recover previous results from the literature, such as the correct lower bounds for constructing triangles and arbitrary trees (first obtained by \citet{FKM25}), as well as diamonds and $k$-fans (first obtained by \citet{ILS24}).
Moreover, we also obtain several general new results, such as the correct lower bound for wheels on $k$ vertices (\cref{cor:Wheel_lower_bound}) and copies of~$K_{1,T}$ for arbitrary trees~$T$ (\cref{thm:KT_lower}).
%We know, however, that the result is far from tight in general; in particular, we cannot recover the results of \citet{FKM25} for cycles of any length at least $4$.

However, the results which follow from \cref{lem:max_no_copies} are not tight in general, and one can readily verify that we do not recover the tight lower bound for the optimal budget for constructing a copy of any cycle of length at least $4$ proved by \citet{FKM25} (see \cref{lem:CycleStrat}).
It is natural to wonder for which other graphs it is possible to obtain a tight lower bound on the ``budget threshold'' by applying \cref{lem:max_no_copies}.

\begin{problem}\label{problem:characterise}
    Characterise all fixed graphs $F$ for which \cref{lem:max_no_copies} yields a tight asymptotic lower bound on the optimal budget $b$ (as a function of~$n$ and~$t$) for which there exist successful $(t,b)$-strategies for constructing a copy of~$F$.
\end{problem}

We emphasise here that, in order to prove our lower bounds on the optimal budget under which there exists a successful $(t,b)$-strategy for constructing some copy of a fixed graph~$F$, in \cref{lem:max_no_copies} we instead obtain upper bounds on the \emph{number of copies} of $F$ that Builder can construct 
(the desired lower bounds follow by simply checking where this upper bound on the number of copies is~$o(1)$, which implies that a.a.s.\ Builder cannot construct any copy of~$F$).
From this point of view, the fact that our results are not tight in general is not too surprising: if we consider the intuition for the counting result that we presented in \cref{sect:lower_bounds} when $np\geq b=\omega(1)$, it is clear that one cannot really hope to always have~$b$ choices for each subsequent vertex that is added towards counting a copy of~$F$.
Indeed, imagine for example that~$F$ contains some edge $e=xy$ and some vertex $z$ at distance at least~$2$ from $e$ (say, at distance $2$ from $x$), and that we start constructing $F$ from some choice for~$e$.
If we first assume that we have $b$ choices for a neighbour $u$ of~$x$, then it is not possible to have $b$ choices for $z$ for each choice of $u$, as this would require a total of $\Theta(b^2)=\omega(b)$ edges.
Similarly, if $np=o(b)$, then one cannot really hope to always have~$np$ choices for each subsequent vertex that is added towards counting a copy of~$F$, as most vertices cannot have such a high degree due to the budget constraint.
Therefore, in order to obtain sharper counting results, a finer approach is needed.

This obvious gap in our approach leads to two interesting problems.
On the one hand, since our counting result cannot be tight in general for all graphs $F$, obtaining general sharp counting results becomes a natural challenge.
Even when \cref{lem:max_no_copies} can be used to obtain a sharp lower bound for the optimal budget for constructing some copy of $F$, it is not clear that our bound on the number of copies of $F$ is tight for all pairs $(t,b)$ of time and budget constraints.
While we have made no effort to study this direction, we believe that a general solution would shed much light on the behaviour of the budget-constrained random graph process.

\begin{problem}\label{problem:counting}
    For any $t=t(n)\in[M]$ and $b=b(n)\in[t]$ and any fixed graph $F$, determine sharp asymptotic bounds for the number of copies of $F$ that $B_t$ may contain when running the budget-constrained random graph process under any $(t,b)$-strategy.
\end{problem}

On the other hand, there may well be other general tools that allow us to obtain a tight lower bound on the optimal budget constraint for general graphs $F$.
A different approach for addressing this problem (which is in line with the work of \citet{ILS24} and with \cref{lem:lower_bound_probability}) is to obtain bounds for a different key quantity: rather than the number of copies of $F$, one can consider the number of pairs of vertices for which there exists a rooted copy of $F\setminus\{e\}$ (a ``trap'').
If this quantity can be estimated, then one can also estimate the probability that a trap is offered throughout the random graph process, which would then complete a copy of $F$.

\subsection{Upper bounds}

In \cref{thm:WheelStrategy,thm:KT_upper} we have provided explicit strategies for Builder to a.a.s.\ construct a graph which contains a copy of a wheel (in particular, a $K_4$) or a $K_{1,T}$ for an arbitrary tree~$T$.
These strategies work with budgets which are optimal up to constant factors for every possible value of~$t$.
Moreover, in \cref{thm:K5strategy} we provided a strategy for constructing a copy of $K_5$ which we believe to be optimal up to a constant factor (see \cref{conj:K5}).
These different strategies are actually very similar, and are particular examples of a general class of strategies.
We believe that the optimal strategies for constructing cliques of any size must be of the same type as those we have considered here.
Roughly speaking, these strategies work as follows.
First, one iteratively grows stars within the set of vertices chosen as leaves of the previous set of stars, until a certain depth.
Then, one attempts to find a copy of a smaller clique contained in one of the sets of leaves of a star built at the previous stage.

More formally, such strategies can be described as follows.
Suppose we want to construct a graph containing a copy of $K_s$, for some $s\geq3$.
We may describe a strategy of ``depth'' $i$ for any $i\in[s]$.
First, fix an arbitrary equipartition of $[n]$ into $s$ sets, $[n]=U_1\cupdot\ldots\cupdot U_s$.
(This plays no real role in the strategy and incurs in constant-factor losses on the required budget for the strategy, but should simplify formalising a proof of its correctness.)
Then, take a random subset $A\subseteq[n]$ of size $\alpha$, for an appropriate choice of $\alpha$, and for each $i\in[s]$ let $A_i\coloneqq A\cap U_i$ and $A_i^+\coloneqq A\cap(\bigcup_{j\in[s]\setminus[i]}U_j)$.
Now fix some $i\in\{0\}\cup[s-1]$.
We are going to discuss a strategy of depth~$i$.

First, we iteratively define a set of vectors of vertices, which simply represent sets of vertices which form a clique in the built graph.
For each $j\in\{0\}\cup[i]$, we shall denote the set of all such vectors of length $j$ by $V_j$.
We begin by setting $V_0\coloneqq\{\varnothing\}$, and $N_0(\varnothing)\coloneqq A$.
Now, for each $j\in[i]$ and assuming that $V_{j-1}$ is defined and $N_{j-1}(\vect{x})$ is defined for every $\vect{x}\in V_{j-1}$, we proceed as follows.
We fix an appropriate integer $k_j$ and, for each $\vect{x}\in V_{j-1}$, we choose a set $Y_j^{\vect{x}}\subseteq N_{j-1}(\vect{x})\cap A_j$ of size $k_j$ uniformly at random.
Then, for time $t/s$ and while at most $b/s$ edges have been bought, we purchase every presented edge with one endpoint in $Y_j^{\vect{x}}$ and the other in $N_{j-1}(\vect{x})\cap A_j^+$, for any $\vect{x}\in V_{j-1}$.
At this point, we set $V_j\coloneqq\{\vect{x}y:\vect{x}\in V_{j-1},y\in Y_j^{\vect{x}}\}$ and, for each $\vect{x}\in V_j$, we let $N_j(\vect{x})$ denote the set of all vertices $v\in A_j^+$ such that the edge $x_jv$ was purchased in this round of exposure.
Once this process has been run for all $j\in[i]$, note that for every $\vect{x}\in V_i$ we have that all edges between vertices of $\vect{x}$ have been purchased throughout the process, and that all vertices in $N_i(\vect{x})$ are neighbours of every vertex of $\vect{x}$ in the built graph.
In  one last round of exposure, for time $t/s$ and while at most $b/s$ edges have been bought, Builder purchases every offered edge which lies in $\bigcup_{\vect{x}\in V_i}\inbinom{N_i(\vect{x})}{2}$.
If this last round of exposure results in a copy of $K_{s-i}$ contained in some $N_i(\vect{x})$, then this clique together with the vertices of $\vect{x}$ forms a copy of $K_s$.

We remark that, for each $i$, the strategy of depth $i$ may succeed only in some range of $t$, and different depths lead to different bounds on the optimal budget; the optimal budget for some $t$ would then be taken as the minimum over all these strategies which are successful for this $t$.
As there are multiple parameters to consider, the analysis of these strategies becomes more cumbersome as $s$ grows, and already for~$K_6$ the behaviour seems to be much more complex.

We have performed a preliminary analysis of the $K_6$ case and represent some of our findings in \cref{fig:K6}; this analysis does not lead to any particular insights and, while it follows the same approach as the proof of \cref{thm:WheelStrategy}, it is much more technical than the ones we have presented, so we have chosen to omit the details.
Still, we would like to comment on some key aspects.
For instance, the strategy of depth $0$ (that is, simply purchasing all edges which are offered within a subset of $[n]$ and hoping that this results in a copy of $K_s$) uses suboptimal budget for all $t$ when $s\in\{3,4,5\}$, but when $s=6$ there is a range of $t$ where this strategy seems to outperform all deeper strategies.
In particular, if optimal, this makes the $\log_nb$-by-$\log_nt$ depiction of optimal budgets as shown in Figures~\ref{fig:wheels}, \ref{fig:K4} and \ref{fig:K6} no longer convex, which is a different behaviour from previously known examples.
Moreover, rather than having two ranges where the behaviour is different, these strategies for $K_6$ seem to yield four distinct ranges, which is again different from all previous results.
In particular, our analysis suggests that, if 
\[M\geq t\geq b=\omega\left(\min\left\{\left(\frac{n^2}{t}\right)^4,\max\left\{\frac{n^{21}}{t^{12}},\frac{n^{16}}{t^9},\left(\frac{n^{2}}{t}\right)^{7/3}\right\}\right\}\right),\]
then there is a successful $(t,b)$-strategy for constructing a copy of $K_6$.
It would be very interesting to confirm whether these strategies are indeed optimal or can be outperformed by different ones.

\begin{figure}
\begin{tikzpicture}[scale=1]
    \begin{axis}[
    %unit vector ratio*=1 1 1,
    axis lines = middle,
    %x=5cm,
    %y=5cm,
    xlabel={$\log_{n}t$},
    ylabel={$\log_{n}b$},
    label style = {below left},
    legend style={at={(1.3,0.75)}},
    xmin=8/5, xmax=2.1, 
    xtick={8/5+0.001,13/8,5/3,1.7,2},
    xticklabels={$8/5$,, $5/3$, $\ \ \ \ \ \ \ \ \ \ 17/10$, $2$},
    ymin=0, ymax=1.9,
    ytick={0.001,2/3,0.7,1,3/2,8/5},
    yticklabels={$0$, $2/3$, , $1$, $3/2$, $8/5$}
    ]
        \addplot[blue, ultra thick, domain=8/5:13/8] {8-4*x};
        \addplot[blue, ultra thick, domain=13/8:5/3] {21-12*x};
        \addplot[blue, ultra thick, domain=5/3:17/10] {16-9*x};
        \addplot[blue, ultra thick, domain=17/10:2] {14/3-7*x/3};
        \addplot[blue, ultra thick, dashed, domain=8/5:5/3] {24-14*x};
        \addplot[blue, ultra thick, dashed, domain=5/3:2] {4-2*x};
        \addplot[gray, thick, dashdotted, domain=8/5:2] {x-1};
        \addplot[gray, thick, dashed, domain=8/5:2] {8-4*x};
        \addplot[gray, dotted, domain=8/5:5/3] {2/3};
        \addplot[gray, dotted, domain=8/5:1.7] {0.7};
        \addplot[gray, dotted, domain=8/5:5/3] {1};
        \addplot[gray, dotted, domain=8/5:13/8] {3/2};
        \addplot[gray, dotted] coordinates {(13/8, 0) (13/8, 3/2)};
        \addplot[gray, dotted] coordinates {(5/3, 0) (5/3, 1)};
        \addplot[gray, dotted] coordinates {(1.7, 0) (1.7, 0.7)};
        \legend{$K_6$ upper bound,,,,$K_6$ lower bound,,$x\mapsto x-1$,$x\mapsto 8-4x$}
    \end{axis}
\end{tikzpicture}
\caption{A depiction of the upper bound and lower bound on the optimal budget~$b$ for successful $(t,b)$-strategies for~$K_6$.
The lower bound follows from \cref{lem:max_no_copies}, and the proof of the upper bound is omitted.}
\label{fig:K6}
\end{figure}
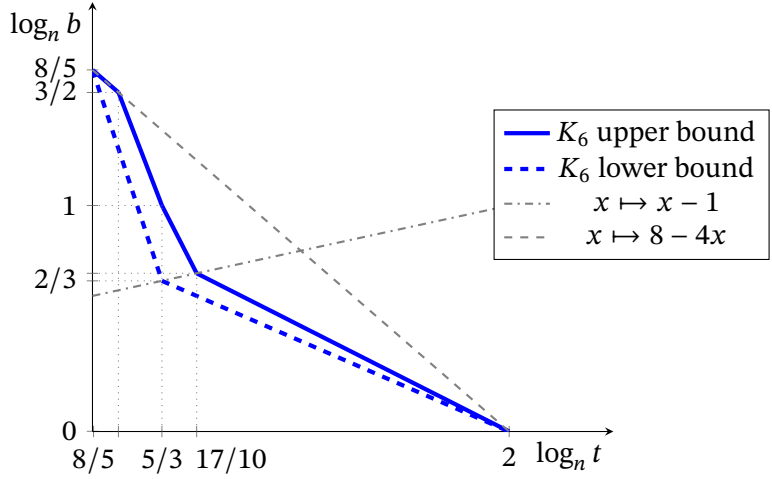

    % \begin{algorithm}
    % \caption{A family of $(t,b)$-strategies for constructing $K_s$.}\label{strat:KsGeneral}
    % \begin{algorithmic}[1]\setcounter{ALG@line}{-1}
    %     \State{Fix an arbitrary equipartition of $[n]$ into $s-1$ sets, $[n]=U_1\cupdot\ldots\cupdot U_{s-1}$.
    %     Fix a set $X_0\subseteq U_1$ of some appropriate size.}\label{stratKsGeneralStage0}
    %     \State{For time $t/2$ and while the built graph has at most $b/2$ edges, buy any presented edge with one endpoint in $X$ and the other in $V$.}\label{stratKsGeneralStage1}
    %     \State{For time $t/2$ and while the built graph has at most $b$ edges, buy any presented edge which is contained in $N_{B_{t/2}}(x)$ for some $x\in X$.}\label{stratKsGeneralStage2}
    % \end{algorithmic}
    % \end{algorithm}

In even more generality, we may think of strategies for other fixed graphs.
A property that cliques (as well as wheels $W_k$ and graphs $K_{1,T}$) satisfy is that they contain \defi{universal} vertices, that is, vertices which are neighbours to every other vertex of the graph.
We believe that, for graphs containing a universal vertex, optimal $(t,b)$-strategies may to an extent have a similar structure to that discussed above, at least for some range of $t$.
More concretely, we wonder whether it is true that, for every fixed graph $F$ containing a universal vertex $v$, there exists some range of $t$ for which the ``budget-threshold'' is attained by a strategy which simply fixes a~vertex $x\in[n]$, spends some amount of time building its neighbourhood, and then the rest of the time simulating an optimal strategy for constructing a copy of $F-v$ within this neighbourhood.
Note that, when $t$ is sufficiently large, this is the case for all the proofs we have presented here.
In even greater generality, we say that a~set of vertices~$S$ is \defi{universal} for a graph $F$ if $S\subseteq V(F)$, $F[S]$ is a (possibly empty) matching, and $S\subseteq N(v)$ for all $v\in V(F)\setminus S$.
We extend the previous question to the setting of graphs containing a universal set.
That is, given a fixed graph $F$ containing a universal set $S$, does there exist some range of $t$ for which the following strategy attains the ``budget-threshold''?
For as long as needed (a.a.s.\ constantly many steps), purchase any set of (vertex-disjoint) offered edges until we have a matching $M$ of size $|E(F[S])|$.
Then, fix an arbitrary set $X\subseteq[n]\setminus V(M)$ of size $|S|-2|E(F[S])|$.
For time $t/2$, purchase any offered edge which is incident to any vertex in~$X\cup V(M)$, up to some bound on the total number of purchased edges.
Then, for time $t/2$, simulate an optimal strategy for constructing a copy of $F-S$ in the (purchased) common neighbourhood of all fixed vertices.

% Use with natbib, not biblatex:
\bibliographystyle{mystyle}
\bibliography{bib}

% Use with biblatex, not natbib:
% \printbibliography

\appendix

\section{Proof of \texorpdfstring{\cref{thm:KT_main}}{Theorem 1.2}}\label{appendix}

As we did with the other main results in our paper, \cref{thm:KT_main} can be split into two statements, one for the lower bound and one for the upper bound on the optimal budget for which we can find a~successful $(t,b)$-strategy.
We begin with the statement for the lower bound.

\begin{theorem}\label{thm:KT_lower}
    Let $T$ be a fixed tree.
    Let $m\coloneqq|E(T)|$.
    For all $t\in[M]$, if
    \begin{equation*}
        t=o\left(n^{\frac{3m}{2m+1}}\right)\qquad\text{ or }\qquad b=o\left(\max\left\{\frac{n^{3m}}{t^{2m}},\left(\frac{n^{2}}{t}\right)^{\frac{m}{m+1}}\right\}\right),
    \end{equation*}
    then for any $(t,b)$-strategy a.a.s.~$B_t$ does not contain a copy of\/ $K_{1,T}$.
\end{theorem}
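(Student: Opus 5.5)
The plan mirrors the proof of \cref{cor:Wheel_lower_bound}: apply \cref{lem:max_no_copies} to $F\coloneqq K_{1,T}$ and check when the resulting upper bound on $\nc(K_{1,T})$ is $o(1)$. Here $v(F)=m+2$ and $e(F)=2m+1$, so $e(F)-v(F)+1=m$. Hence, for any $(t,b)$-strategy $\cS$ and any slowly growing $\eta=\omega(1)$, a.a.s.
\[\nc(K_{1,T})\leq \eta\cdot b\cdot\min\{b,np\}^{m}p^{m},\qquad p\coloneqq t/M.\]

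First I will dispose of the sparse regime $t=o(n^{3m/(2m+1)})$. The densest subgraph of $K_{1,T}$ is the whole graph, since every subgraph on $v'$ vertices has at most $2(v'-1)-1$ edges. Its density is $(2m+1)/(m+2)$. The first moment method then gives that $G_t$ a.a.s.\ contains no copy of $K_{1,T}$ when $n^{m+2}p^{2m+1}=o(1)$. This condition reads $t=o(n^{2-(m+2)/(2m+1)})=o(n^{3m/(2m+1)})$, matching the statement. So we may assume $t=\Omega(n^{3m/(2m+1)})$.

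Next I locate the crossover between the two terms of the maximum:
\[\frac{n^{3m}}{t^{2m}}=\Bigl(\frac{n^2}{t}\Bigr)^{m/(m+1)}\iff n^{3m(m+1)-2m}=t^{2m(m+1)-m}\iff t=n^{3/2}.\]
\emph{Case $t=O(n^{3/2})$.} Here $n^{3m}/t^{2m}$ dominates. I use $\min\{b,np\}\leq np$, which gives $\nc\leq\eta\, b\,n^m p^{2m}$. Since $n^mp^{2m}=\Theta(t^{2m}/n^{3m})$, any $b=o(n^{3m}/t^{2m})$ allows a choice of $\eta=\omega(1)$ with $\eta b n^mp^{2m}=o(1)$. \emph{Case $t=\omega(n^{3/2})$.} Here $(n^2/t)^{m/(m+1)}$ dominates. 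I use $\min\{b,np\}\leq b$, which gives $\nc\leq\eta\, b^{m+1}p^m$. This is $o(1)$ for a suitable $\eta$ whenever $b=o(p^{-m/(m+1)})=o((n^2/t)^{m/(m+1)})$. In both cases a.a.s.\ $\nc(K_{1,T})<1$, so $B_t$ contains no copy of $K_{1,T}$.

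No step is a genuine obstacle, since \cref{lem:max_no_copies} does all the work. The only care needed is in two places. First, checking that the maximal density of $K_{1,T}$ is attained by the whole graph, so that the first moment threshold is exactly $n^{3m/(2m+1)}$. Second, choosing $\eta$ correctly in each regime; the routine exponent checks are the ones displayed above.
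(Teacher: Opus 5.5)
Your approach is the paper's: apply \cref{lem:max_no_copies} to $K_{1,T}$ (with $v=m+2$, $e=2m+1$), then bound $\min\{b,np\}$ by $np$ or by $b$ according to which term of the maximum dominates. However, your crossover computation is wrong, and as written this breaks Case~1 on a range of~$t$.

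From $n^{3m(m+1)-2m}=t^{2m(m+1)-m}$ you get $t=n^{(3m^2+m)/(2m^2+m)}=n^{(3m+1)/(2m+1)}$, not $n^{3/2}$. Note that $(3m+1)/(2m+1)<3/2$ for every~$m$. The value $n^{3/2}$ is the crossover in the wheel case, where $n^{3k-4}/t^{2k-3}$ is compared with $n^2/t$; it does not carry over here.

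\emph{Why this matters.} Consider the window $n^{(3m+1)/(2m+1)}\ll t=O(n^{3/2})$, which falls in your Case~1.
\begin{itemize}
\item You claim $n^{3m}/t^{2m}$ is the larger term there, but in fact $(n^2/t)^{m/(m+1)}$ is.
\item So the hypothesis there is $b=o((n^2/t)^{m/(m+1)})$, whereas your Case~1 only treats the smaller range $b=o(n^{3m}/t^{2m})$.
\item For $b$ in between, your bound $\eta\, b\, n^mp^{2m}=\Theta(\eta\, b\,t^{2m}/n^{3m})$ does not tend to~$0$.
\end{itemize}
For example, take $m=2$, $t=n^{1.45}$ and $b=n^{0.3}$. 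Then $b=o(n^{0.3\overline{6}})=o((n^2/t)^{2/3})$, yet $b\,n^2p^4=\Theta(n^{0.1})$.

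\emph{The fix is immediate.} Your Case~2 computation never uses $t=\omega(n^{3/2})$: for every $t$,
\[\eta\, b^{m+1}p^m=\Theta\bigl(\eta\,(b/B)^{m+1}\bigr)\quad\text{with } B\coloneqq(n^2/t)^{m/(m+1)}.\]
Hence that estimate handles every $t$ at which the second term dominates. Equivalently, split the cases at $t=n^{(3m+1)/(2m+1)}$, which is exactly what the paper does. With this correction your argument coincides with the paper's proof.

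A minor point: the balancedness check in the sparse regime is unnecessary. For the $0$-statement it suffices that the expected number of copies of $K_{1,T}$ in $G_t$ is $\Theta(n^{m+2}p^{2m+1})=o(1)$.
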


\begin{proof}
    Observe that $K_{1,T}$ is a graph with $m+2$ vertices and $2m+1$ edges.
    We argue as in the proof of \cref{cor:Wheel_lower_bound}.
    Let $p\coloneqq t/M$.
    If $t=o(n^{3m/(2m+1)})$, then a.a.s.\ $G_t$ contains no copy of~$K_{1,T}$, by the first moment method, %classical results of \citet{ER60},\COMMENT{Indeed, note that $K_{1,T}$ is a strictly balanced graph and that the expected number of copies of $K_{1,T}$ in $G(n,p)$ is $\Theta(n^{m+2}p^{2m+1})$, so the threshold is of order $p=\Theta(n^{-(m+2)/(2m+1)})$, which corresponds to $t=\Theta(n^{2-(m+2)/(2m+1)})$. Finally, notice that $2-\frac{m+2}{2m+1}=2-\frac{m+1/2+3/2}{2m+1}=\frac{3}{2}-\frac{3/2}{2m+1}=\frac{3}{2}\left(1-\frac{1}{2m+1}\right)=\frac{3}{2}\frac{2m}{2m+1}=\frac{3m}{2m+1}.$  $\checkmark$}
    and thus neither does $B_t$.
    Hence, we may assume that $t=\Omega(n^{3m/(2m+1)})$.
    Given an arbitrary $(t,b)$-strategy~$\mathcal{S}$, \cref{lem:max_no_copies} ensures that a.a.s.
    \[\nc(K_{1,T})=\nc(K_{1,T},\cS,n,t,b)\leq\eta\cdot b\cdot\min\left\{b,np\right\}^mp^{m},\]
    where $\eta=\omega(1)$ is a function that grows arbitrarily slowly.
    Observe that
    \[\frac{n^{3m}}{t^{2m}}=\left(\frac{n^{2}}{t}\right)^{\frac{m}{m+1}}\iff t=n^{\frac{3m+1}{2m+1}}.\]
    Suppose first that $t=O(n^{\frac{3m+1}{2m+1}})$.
    For any $b=o(n^{3m}/t^{2m})$, choose some $\eta=\omega(1)$ with $\eta=o(n^{3m}/bt^{2m})$. % (which exists by the choice of $b$).
    Then, we conclude that a.a.s.\COMMENT{We are using the trivial fact that $\min\{b,t/n\}\leq t/n$.}
    \[\nc(K_{1,T})\leq\eta\cdot b\left(np\right)^{m}p^m=O\left(\eta\cdot\frac{bt^{2m}}{n^{3m}}\right)=o(1).\]
    Suppose next that $t=\omega(n^{\frac{3m+1}{2m+1}})$.
    Given any $b=o((n^2/t)^{m/(m+1)})$, choose some $\eta=\omega(1)$ with \mbox{$\eta=o(n^{2m}/b^{m+1}t^{m})$}.
    We then conclude that a.a.s.\COMMENT{We now use the trivial fact that $\min\{b,t/n\}\leq b$.}
    \[\nc(K_{1,T})\leq\eta\cdot b^{m+1}p^m=O\left(\eta\cdot \frac{b^{m+1}t^{m}}{n^{2m}}\right)=o(1).\qedhere\]
\end{proof}

For the upper bound, we proceed analogously as in the proof of \cref{thm:WheelStrategy}.
The most relevant change is that, rather than appealing to \cref{lem:CycleStrat} for the second case of the analysis, we instead use the following result.

\begin{lemma}[{\citet[Theorem~1.5]{FKM25}}]\label{lem:TreeStrat}
    Let $m\geq1$ be a fixed integer and~$T$ be a tree with~$m$ edges.
    If
    \[M\geq t\geq b=\omega(\max\{(n/t)^{m-1},1\}),\]
    then there exists a successful $(t,b)$-strategy for constructing a copy of\/ $T$.
\end{lemma}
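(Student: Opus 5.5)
We plan to prove \cref{lem:TreeStrat} with a layered greedy strategy. It grows many vertex-disjoint partial copies of $T$ at once, extending each by one edge per stage. By monotonicity of successfulness in $b$, we may replace $b$ by a smaller budget that is still $\omega(\max\{(n/t)^{m-1},1\})$. So we assume $b\le\min\{t,n/\log n\}$; this keeps the purchased graph on $o(n)$ vertices. Fix an ordering $e_1,\ldots,e_m$ of $E(T)$ such that each prefix $T_j\coloneqq\{e_1,\ldots,e_j\}$ is a subtree. Write $e_{j+1}=u_jw_j$, where $u_j\in V(T_j)$ and $w_j$ is the new leaf.

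The strategy has $m$ stages, each of length $t/m$ and each allotted budget $b/m$.
\begin{itemize}
\item In Stage~1 Builder buys every offered edge that is vertex-disjoint from all previously bought edges, until $k_1\coloneqq\lceil b/(2m)\rceil$ edges are bought. These are disjoint copies of $T_1$.
\item In Stage~$j+1$ we have a family $\mathcal{C}_j$ of $k_j$ vertex-disjoint copies of $T_j$. Builder buys an offered edge $yz$ if $y$ is the image of $u_j$ in some copy $C\in\mathcal{C}_j$ not yet extended in this stage, and $z$ is a vertex untouched by any bought edge. This creates a copy of $T_{j+1}$. Builder stops once $\min\{k_j,b/(2m)\}$ copies have been extended, and $\mathcal{C}_{j+1}$ is the set of extended copies.
\end{itemize}
By construction at most $b$ edges are bought and every element of $\mathcal{C}_m$ is a copy of $T$. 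It therefore suffices to show that a.a.s.\ $k_m\geq1$.

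For the analysis, first consider Stage~1. Since $t\ge b$ and at most $O(b)=o(n)$ vertices are ever used, each offered edge is vertex-disjoint from the previous purchases with probability $1-o(1)$. Hence a.a.s.\ $k_1=\Theta(b)$.

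For the later stages, fix the history up to the start of Stage~$j+1$. At each step of this stage, each not-yet-extended copy $C$ is extended by the offered edge with probability at least $(1-o(1))(n-O(b))/M\ge 1/n$. Conditionally on the past, these events are disjoint across copies. So the number of copies extended within the $t/m$ steps stochastically dominates a binomial variable. For instance, the number of steps in which some extension happens, while fewer than half the copies have been extended, dominates $\mathrm{Bin}(t/m,\,k_j/(2n))$.

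This gives a.a.s.\ $k_{j+1}\geq c\min\{k_j\, t/n,\ k_j,\ b\}$ for some constant $c>0$, as long as the mean is $\omega(1)$. The bound follows from \cref{lem:chernoff} applied to the dominating binomial; alternatively, one can run a supermartingale argument over the steps. Iterating over the constantly many stages, we get a.a.s.
\[k_m\geq c'\min\{b\,(t/n)^{m-1},\,b\}.\]
This is $\omega(1)$ by the hypothesis $b=\omega(\max\{(n/t)^{m-1},1\})$. In the regime $t\ge n$, the $\min$ is achieved by $b$, and $b=\omega(1)$ suffices.

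The main obstacle is making the concentration rigorous. The extension events depend on the random, adaptively chosen family $\mathcal{C}_j$ and on which vertices have already been used. There is also a delicate regime where $k_j t/n$ is only barely $\omega(1)$, for example in the last stage. We handle this by proving the per-stage statement conditionally on any history satisfying $k_j$ at least its target and at most $O(b)$ used vertices. There the stochastic domination by a binomial is clean, because each offered edge is uniform among the $(1-o(1))M$ unoffered pairs. We then take a union bound over the $m$ stages, each failing with probability $o(1)$. If needed, the coupling of \cref{lem:coupling} with $k=m$ segments could be used instead, replacing each stage by an independent $G(n,p_j)$ restricted to the relevant edges.
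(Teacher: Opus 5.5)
The paper gives no proof of this lemma. It quotes Theorem~1.5 of \citet{FKM25} and uses it as a black box in Case~2 of the proof of \cref{thm:KT_upper}. Your layered greedy argument is therefore a genuinely different, self-contained route, and it is sound.

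The strategy grows $\Theta(b)$ vertex-disjoint partial copies and extends each by a fresh leaf in every stage. This gives $k_{j+1}\geq c\min\{k_jt/n,k_j\}$, so $k_m\geq c'\min\{b(t/n)^{m-1},b\}=\omega(1)$. The binomial mean $k_jt/(2mn)$ is $\omega(1)$ in every stage $j\leq m-1$, because $k_jt/n\geq c\min\{b(t/n)^{j},bt/n\}$. Hence plain Chernoff suffices, and there is no delicate regime. The citation buys the paper brevity. Your argument buys transparency about where $(n/t)^{m-1}$ comes from: each of the $m-1$ extension rounds succeeds for a given copy with probability $\Theta(\min\{t/n,1\})$, which matches the count $b\min\{b,np\}^{m-1}$ from \cref{lem:max_no_copies}.

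Two small points should be made explicit.

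\emph{Reducing $b$.} The reduction to $b\leq n/\log n$ needs $\max\{(n/t)^{m-1},1\}=o(n/\log n)$. This holds because $t\geq b=\omega((n/t)^{m-1})$ forces $t^m=\omega(n^{m-1})$, and hence $(n/t)^{m-1}=o(n^{1-1/m})$.

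\emph{Large $t$.} Your per-step bound $(1-o(1))(n-O(b))/M$ silently assumes two things: that $y_C$ has only $o(n)$ previously offered pairs, and that $M-i+1=\Theta(M)$. Both can fail in the last stages when $t$ is close to $M$. Fix this by also reducing $t$. When $t\geq n$ the hypothesis only requires $b=\omega(1)$, so you may pass to $t'\coloneqq\min\{t,n^{3/2}\}$ and $b'\coloneqq\min\{b,\log n\}$. A successful $(t',b')$-strategy is also a successful $(t,b)$-strategy. Then run the binomial domination on the a.a.s.\ event $\Delta(G_{t'})=o(n)$, for instance by freezing the comparison at the first time this fails. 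With these additions the proof is complete.
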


\begin{theorem}\label{thm:KT_upper}
    Let $T$ be a fixed non-trivial tree.
    Let $m\coloneqq|E(T)|$.
    If
    \begin{equation*}
        M\geq t\geq b=\omega\left(\max\left\{\frac{n^{3m}}{t^{2m}},\left(\frac{n^{2}}{t}\right)^{\frac{m}{m+1}}\right\}\right),
    \end{equation*}
    then there exists a successful $(t,b)$-strategy for constructing a copy of\/ $K_{1,T}$.
\end{theorem}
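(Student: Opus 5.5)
We follow the proof of \cref{thm:WheelStrategy} and split the range of $t$ at the crossover point
\[\frac{n^{3m}}{t^{2m}}=\left(\frac{n^2}{t}\right)^{\frac{m}{m+1}}\iff t=n^{\frac{3m+1}{2m+1}}.\]
The lower bound on $b$, combined with $b\leq t$, forces $t=\omega(n^{3m/(2m+1)})$, which we assume throughout.

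\textbf{Case 1: $t=O(n^{(3m+1)/(2m+1)})$.} Here $b=\omega(n^{3m}/t^{2m})$. We run the two-stage star strategy (the analogue of \cref{strat:K4sparse}), with $X$ a set of roughly $n^{\alpha}/r^{\beta}$ centres. The exponents are chosen so that the expected number of copies of $T$ inside some $N_{F_1}(x)$ is $\Theta((t/r)^{c})=\omega(1)$. This expectation is $|X|(t/n)^{m+1}(t/n^2)^m$, so we take $|X|\approx n^{3m}/r^{2m+1}$, with $r=o(t)$ chosen close enough to $t$ that the budget conditions hold.

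\emph{Budget.} Stage~1 buys about $|X|t/n$ edges. Stage~2 buys about $|X|(t/n)^2(t/n^2)=|X|t^3/n^4$ edges, which is the larger term since $t\geq n^{3/2}$ would be needed for the reverse comparison. Both are $O(n^{3m}t/r^{2m+1})\cdot\max\{1,t^2/n^3\}$, which we make $o(b)$ by the choice of $r$. We need to check that this is compatible with $b=\omega(n^{3m}/t^{2m})$ in this range. Since $t^2/n^3$ may exceed $1$ when $t>n^{3/2}$, this is where the bound $t=O(n^{(3m+1)/(2m+1)})$ enters. If the stage-2 cost is too large, we restrict Stage~2 to buy only edges incident to a fixed subset of each neighbourhood, adapted to the tree structure. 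For a first attempt we simply check that the plain version suffices.

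\emph{Success.} We use the coupling of \cref{lem:coupling} exactly as before. We establish the common-neighbourhood properties $(\mathrm{CN}\ell)$ for $2\leq\ell\leq m+1$ by first-moment union bounds. We then apply the second moment method to the count $Z$ of copies of $T$ in $F_2$ lying inside some $N_{F_1}(x)$. Pairs of copies sharing $a\geq1$ edges, and hence at least $a+1$ vertices, are controlled using $(\mathrm{CN}\ell)$ and $\deg_{F_1}(x)=\Theta(t/n)$, which gives $\mathrm{Var}(Z)=(1+o(1))\mathbb{E}[Z]$. Copies sharing only vertices have zero covariance.

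\textbf{Case 2: $t=\omega(n^{(3m+1)/(2m+1)})$.} Here $b=\omega((n^2/t)^{m/(m+1)})$. We use the analogue of \cref{strat:K4dense}: fix a vertex $x$, grow its neighbourhood to size $\tilde n$ within time $t_1$, and then simulate an optimal $(t',b/2)$-strategy for $T$ on $N_{B_{t_1}}(x)$, where $t'=t\tilde n^2/4n^2$. By \cref{lem:simulated_length}, the simulated process a.a.s.\ has length at least $t'$. By \cref{lem:TreeStrat}, this stage succeeds provided $b=\omega(\max\{(\tilde n/t')^{m-1},1\})$, where $\tilde n/t'=\Theta(n^2/(t\tilde n))$. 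We choose $\tilde n$ satisfying
\[\tilde n=o(b),\qquad \tilde n=o(t/n),\qquad \tilde n=\omega(n/\sqrt t),\qquad \left(\frac{n^2}{t\tilde n}\right)^{m-1}=o(b).\]
Taking $\tilde n$ just below $b$ (with $b\leq t/n$ without loss of generality), the last condition becomes $b^{m}=\omega((n^2/t)^{m-1})$. This condition is implied by $b=\omega((n^2/t)^{m/(m+1)})$ whenever $n^2/t\geq1$, since $m/(m+1)\geq(m-1)/m$. It remains to check that $(n^2/t)^{m/(m+1)}=o(t/n)$ in this range; this is exactly equivalent to $t=\omega(n^{(3m+1)/(2m+1)})$. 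We then pick $t_1$ with $\tilde n n\leq t_1=o(t)$ and $t_1=\omega(n^2/\tilde n^2)$.

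The main obstacle is Case~1: tuning $|X|$ and $r$ so that Stage~2 stays within budget while the variance computation still goes through.
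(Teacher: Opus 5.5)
Your proposal is essentially the paper's proof: the same split at $t=n^{(3m+1)/(2m+1)}$, the two-stage star strategy with $(\mathrm{CN}\ell)$ and a second-moment count of copies of $T$ in Case~1, and growing a single neighbourhood of size $\tilde n$ before invoking \cref{lem:TreeStrat} in Case~2 (taking $\tilde n$ just below a capped $b$ is a harmless variant of the paper's $\tilde n=\omega((n^2/t)^{m/(m+1)})$). Two small repairs are needed: $|X|$ should be $\lceil n^{3m+1}/r^{2m+1}\rceil$, which is what gives $\mathbb{E}[Z]=\Theta((t/r)^{2m+1})$ and what your budget formula actually uses; and the obstacle you flag in Case~1 dissolves, because $(3m+1)/(2m+1)<3/2$ gives $t^2/n^3=O(n^{-1/(2m+1)})=o(1)$ there, so Stage~1 (not Stage~2) is the dominant cost, the plain strategy stays within budget once $r$ is close enough to $t$, no restricted Stage~2 is needed, and the same fact gives the factor $(t^2/n^3)^{m-a}=o(1)$ in your variance bound.
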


\begin{proof}
    We argue like in the proof of \cref{thm:WheelStrategy}.
    By reordering the bounds from the statement, we have that $t=\omega(n^{3m/(2m+1)})=\omega(n)$.
    We consider two cases, depending on the range of~$t$.
    
    \textbf{Case 1.}
    Assume first that $t=O(n^{\frac{3m+1}{2m+1}})$.
    In this range, we have that $b=\omega(n^{3m}/t^{2m})$.
    Fix any such $b$ and let $r=r(n)$ be such that $r=o(t)$ but it is sufficiently close to $t$ that
    \[ r=\omega(n^{3m/(2m+1)}), \qquad \frac{n^{3m}t}{r^{2m+1}}=o(b) \qquad \text{and} \qquad \frac{n^{3m-3}t^3}{r^{2m+1}}=o(b).\] 
    Then, consider the strategy outlined in \cref{strat:KTsparse} below.
    Since this is a $(t,b)$-strategy by construction, it only remains to prove that it is successful for constructing a copy of $K_{1,T}$.

    \begin{algorithm}
    \caption{A $(t,b)$-strategy for $K_{1,T}$ for $t=O(n^{\frac{3m+1}{2m+1}})$.}\label{strat:KTsparse}
    \begin{algorithmic}[1]\setcounter{ALG@line}{-1}
        \State{Set $X\coloneqq[\lceil n^{3m+1}/r^{2m+1}\rceil]$ and $V\coloneqq[n]\setminus X$.}\label{stratKTSparseStage0}
        \State{For time $t/2$ and while the built graph has at most $b/2$ edges, buy any presented edge with one endpoint in $X$ and the other in $V$.}\label{stratKTSparseStage1}
        \State{For time $t/2$ and while the built graph has at most $b$ edges, buy any presented edge which is contained in $N_{B_{t/2}}(x)$ for at least one $x\in X$.}\label{stratKTSparseStage2}
    \end{algorithmic}
    \end{algorithm}
    
    First, note that, by the choice of $r$, we have $|X| = \lceil n^{3m+1}/r^{2m+1}\rceil = o(n)$.
    We then define the coupling of random graphs $(H_1,\hat{G}_1,H_1',H_2,\hat{G}_2,H_2')$, the graphs $\hat{B}_1, \hat{B}_2$, and the edge sets $E_1, E_2$ analogously as in the proof of \cref{thm:WheelStrategy}.
    
    Following the proof of \cref{thm:WheelStrategy}, by the upper bound on $t$ and our choice of~$r$, we note that a.a.s.\ the number of edges purchased during the first stage satisfies
    \[
        e(\hat{B}_1)\leq e_{\hat{G}_1}(X,V)\leq|X|\frac{2t}{n} \leq 4\frac{n^{3m+1}t}{r^{2m+1}n}=o(b),
    \]
    and thus a.a.s.\ $\hat{B}_1=\hat{G}_1\cap E_1$.
    Similarly, a.a.s.\ the number of edges purchased during the second stage satisfies
    \[
        e(\hat{B}_2)\leq |E(H_2')\cap E_2|\leq|X|\binom{2t/n}{2}\frac{2t}{n^2} \leq 8\frac{n^{3m+1}t^3}{r^{2m+1}n^4} = o(b),
    \]
    and thus a.a.s.\ $\hat{B}_2=\hat{G}_2\cap E_2$.
    Therefore, after defining $F_1$ and $F_2$ analogously as in the proof of \cref{thm:WheelStrategy}, in order to verify that the strategy is successful, %(in other words, that during stage~\ref{stratK5SparseStage2} Builder will buy at least one copy of $K_4$ lying inside $N_{\hat{B}_1}(x)$ for some $x\in X$), 
    it suffices to prove that a.a.s.\ there is some $x\in X$ such that $F_2\cap\inbinom{N_{F_1}(x)}{2}$ contains a copy of $T$ (recall that, after revealing $F_1$, each edge in $\inbinom{N_{F_1}(x)}{2}$ appears in $F_2$ independently with some probability $p^*=(1-o(1))t/n^2$).
    
    To prove this, we first claim that for every $\ell\geq3$ we have that a.a.s.
    \begin{enumerate}[label=$(\mathrm{CN}\ell)$]
        \item\label{item:common_neighs_ellKT} every $\ell$-set of vertices of $V$ is contained in the $F_1$-neighbourhood of at most five vertices $x\in X$.
    \end{enumerate}
    Indeed, for a fixed $U\in\inbinom{V}{\ell}$, by our lower bound on $r$ and the bounds on $t$, the probability that there are at least six vertices $x\in X$ such that $U\subseteq N_{F_1}(x)$ is at most\COMMENT{In the first equality, we simply substitute the value of $|X|$. In the second, we use the upper bound on $t$ and the lower bound on $r$ to see that \[ \frac{n^{3m+1}}{r^{2m+1}}\frac{t^\ell}{n^{2\ell}} = o\left(n^{3m+1+\ell\frac{3m+1}{2m+1}-(2m+1)\frac{3m}{2m+1}-2\ell} \right)=o\left(n^{1+\ell\left(\frac{3m+1}{2m+1}-2\right)}\right).\] In the third, we note that $(3m+1)/(2m+1)\leq3/2$. The last one holds since $\ell\geq3$. $\checkmark$}
    \[ \Theta\left(\binom{|X|}{6} \left(\frac{t}{n^2}\right)^{6\ell}\right) = \Theta\left(\left(\frac{n^{3m+1}}{r^{2m+1}}\frac{t^\ell}{n^{2\ell}}\right)^6 \right) = o\left(\left(n^{1+\ell\left(\frac{3m+1}{2m+1}-2\right)} \right)^6\right)=o\left(n^{6-3\ell}\right) = o(n^{-\ell}).\]
    Hence, by a union bound, a.a.s.\ for every $\ell$-set $U\in\inbinom{V}{\ell}$ there are at most five vertices $x\in X$ such that $U\subseteq N_{F_1}(x)$.
    
    In a similar fashion, we claim that a.a.s. 
    \begin{enumerate}[label=$(\mathrm{CN}2)$]
        \item\label{item:common_neighs_2KT} every pair of vertices in~$\inbinom{V}{2}$ is contained in the $F_1$-neighbourhood of at most $4m+1$\COMMENT{Certainly not optimal. $\checkmark$} vertices $x\in X$.
    \end{enumerate}
    To show this, fix an arbitrary pair of vertices $U\in\inbinom{V}{2}$.     By the bounds on $t$ and $r$, with calculations analogous to those for proving \ref{item:common_neighs_ellKT}, we conclude that the probability that $U$ is contained in the $F_1$-neighbourhood of at least $4m+2$ vertices is at most\COMMENT{We have 
    \[\Theta\left(\binom{|X|}{4m+2}(p_1)^{8m+4}\right)
    =\Theta\left(\left(\frac{n^{3m+1}}{r^{2m+1}}\frac{t^2}{n^4}\right)^{4m+2}\right)=o\left(\left(n^{3m-3+\frac{6m+2}{2m+1}-3m}\right)^{4m+2}\right)=o\left(\left(n^{-\frac{1}{2m+1}}\right)^{4m+2}\right)=o(n^{-2}),\]
    where the first inequality simply uses the value of $|X|$, in the second we use the bounds on $t$ and $r$, and the rest are simple calculations. $\checkmark$} 
    \[\Theta\left(\binom{|X|}{4m+2}\left(\frac{t}{n^2}\right)^{8m+4}\right)=\Theta\left(\left(\frac{n^{3m+1}}{r^{2m+1}}\frac{t^2}{n^4}\right)^{4m+2}\right)=o\left(\left(n^{\frac{6m+2}{2m+1}-3}\right)^{4m+2}\right)=o(n^{-2}).\]
    The conclusion follows by a union bound over all possible pairs of vertices.
    
    Now condition on the event that \ref{item:common_neighs_ellKT} holds for all $2\leq\ell\leq m$ and that the bounds in \eqref{equa:K4sparse_e1H} hold (both of which occur a.a.s.).
    Let $Z$ denote the number of copies of\/ $T$ in $F_2$ whose vertex set is contained in $N_{F_1}(x)$ for some $x\in X$.
    Then, again by our choice of $r$,
    \[ \mathbb{E}[Z] = \Theta\left(|X|\left(\frac{t}{n}\right)^{m+1}\left(\frac{t}{n^2}\right)^m \right) = \Theta\left(\frac{n^{3m+1}}{r^{2m+1}}\frac{t^{2m+1}}{n^{3m+1}}\right) = \omega(1).\]
    Similarly as in the proof of \cref{thm:WheelStrategy}, we also have that\COMMENT{
    We next bound the variance by using the covariances of pairs of trees.
    Grouping all pairs of trees into those that intersect in exactly $j$ edges for each $j$ and noting that any pair of trees that overlap in exactly $j$ edges must have at least $j+1$ vertices in common, and using \ref{item:common_neighs_ellKT} to note that, upon fixing any set of $j+1$ vertices, the trees which we consider can only be contained in the neighbourhood of constantly many vertices in $X$, we conclude that
    \begin{align*}
        \mathrm{Var}(Z)&\leq \mathbb{E}[Z] + \sum_{j=1}^O\left(|X|\binom{2t/n}{m+1}\binom{2t/n}{m-j}\left(\frac{t}{n^2}\right)^{2m-j}\right)=\mathbb{E}[Z] + \sum_{j=1}^mO\left(\frac{n^{3m+1}}{r^{2m+1}}\frac{t^{2m+1}t^{2(m-j)}}{n^{6m-3j+1}}\right)\\
        &=\mathbb{E}[Z] \left(1+\sum_{j=1}^mO\left(\frac{t^{2(m-j)}}{n^{3(m-j)}}\right)\right).
    \end{align*}
    Using the upper bound on $t$ and the fact that $(3m+1)/(2m+1)<3/2$, we have that
    \[\frac{t^{2(m-j)}}{n^{3(m-j)}}=O\left(n^{2(m-j)\frac{3m+1}{2m+1}-3(m-j)}\right)=o(1).\]
    Hence, we have what we want.}
    \[\mathrm{Var}(Z)\leq(1+o(1))\mathbb{E}[Z]\]
    and, by Chebyshev's inequality, a.a.s.\ during the second stage Builder will claim at least one copy of~$T$ which completes a copy of $K_{1,T}$.
    
    \textbf{Case 2.}
    Suppose now that $t=\omega(n^{\frac{3m+1}{2m+1}})$.
    Note that in this range we have $b = \omega((n^2/t)^{m/(m+1)})$.
    Fix any such~$b$ and let $\tilde{n}=\tilde{n}(n)$ and $t_1=t_1(n)$ be such that $\tilde{n}=o(b)$, $\tilde{n}=o(t/n)$, $\tilde{n}=\omega((n^2/t)^{m/(m+1)})$, $\tilde{n}n\leq t_1=o(t)$, and $t_1=\omega(n^2/\tilde{n}^2)$.\COMMENT{Again, one needs to verify that such a choice is possible. Note that $(n^2/t)^{m/(m+1)} = o(t/n) \iff t = \omega(n^{(3m+1)/(2m+1)})$, which is our regime, so we can choose such $\tilde{n}$.
    For $t_1$, we have that $\tilde{n}n=o(t)$, so it suffices to verify that $t=\omega(n^2/\tilde{n}^2)\iff\tilde{n}=\omega((n^2/t)^{1/2})$, which holds as $n^2>t$ and $\tilde{n}=\omega((n^2/t)^{m/(m+1)})$. $\checkmark$}
    Now consider the strategy outlined in \cref{strat:KTdense}.
    For sufficiently large~$n$, this is a $(t,b)$-strategy by construction, so it only remains to prove that it is successful.

    \begin{algorithm}
    \caption{A $(t,b)$-strategy for $K_{1,T}$ for $t=\omega(n^{\frac{3m+1}{2m+1}})$.}\label{strat:KTdense}
    \begin{algorithmic}[1]
        \State{Fix a vertex $x\in[n]$.
        For time $t_1$ and while the built graph has at most $\tilde{n}$ edges, buy any presented edge with one endpoint being $x$.}
        \State{For time $t/2$, simulate an optimal $(t\tilde{n}^2/4n^2,b/2)$-strategy for constructing a copy of $T$ on $N_{B_{t_1}}(x)$.}
    \end{algorithmic}
    \end{algorithm}
    
    We can use the same reasoning as in the proof of Case~2 of \cref{thm:WheelStrategy} with $t'\coloneqq t\tilde{n}^2/4n^2$, using \cref{lem:TreeStrat} instead of \cref{lem:CycleStrat}.
    Note that by our choice of~$\tilde{n}$ we have that $b = \omega(\max\{(\tilde{n}/t')^{m-1},1\})$.\COMMENT{Indeed, it is obvious that $b=\omega(1)$ since $t=O(n^2)$, so it remains to show that $b = \omega((\tilde{n}/t')^{m-1})$ for $m\geq2$, as it holds trivially when $m=1$.
    To see this, it suffices to prove the second inequality below: 
    \[b=\omega\left(\left(\frac{n^2}{t}\right)^{\frac{m}{m+1}}\right)=\omega\left(\left(\frac{\tilde{n}}{t'}\right)^{m-1}\right)=\omega\left(\left(\frac{n^2}{t\tilde{n}}\right)^{m-1}\right).\]
    For the inequality we want, it suffices to verify that
    \[\tilde{n}^{m-1}=\Omega\left(\frac{n^{2m-2-2\frac{m}{m+1}}}{t^{m-1-\frac{m}{m+1}}}\right)\iff\tilde{n}=\Omega\left(\left(\frac{n^2}{t}\right)^{1-\frac{m}{(m-1)(m+1)}}\right).\]
    Since $\tilde{n}=\omega((n^2/t)^{m/(m+1)})$, it suffices to verify that
    \[\frac{m}{m+1}\geq1-\frac{m}{(m-1)(m+1)}\iff\frac{(m-1)m}{(m-1)(m+1)}\geq\frac{(m-1)(m+1)-m}{(m-1)(m+1)}\iff0\geq-1,\]
    which holds, as we wanted to see.}
    Thus, by \cref{lem:TreeStrat}, we conclude that a.a.s.\ we construct a copy of $T$ inside $N_{B_{t_1}}(x)$, which results in a copy of $K_{1,T}$.
\end{proof}

\end{document}